\newcommand{\bgset}[1]{\big\{#1\big\}}
\newcommand{\F}{{\mathcal F}}
\newcommand{\M}{{\mathcal M}}
\newcommand{\N}{\mathcal N}
\newcommand{\norm}[2][]{\left\|#2\right\|_{#1}}
\newcommand{\PS}[1]{$(\text{PS})_{#1}$}
\newcommand{\R}{\mathbb R}
\newcommand{\restr}[2]{\left.#1\right|_{#2}}
\newcommand{\seq}[1]{\left(#1\right)}
\newcommand{\set}[1]{\left\{#1\right\}}
\newcommand{\Z}{\mathbb Z}
\newenvironment{enumroman}{\begin{enumerate}

}{\end{enumerate}}
\newtheorem{lemma}{Lemma}[section]
\newtheorem{proposition}[lemma]{Proposition}
\newtheorem{theorem}[lemma]{Theorem}
\newtheorem{corollary}[lemma]{Corollary}
\theoremstyle{definition}
\theoremstyle{remark}
\newtheorem{remark}[lemma]{Remark}
\numberwithin{equation}{section}
\title{On the $N$-dimensional Schr\"{o}dinger--Poisson--Slater equation à la Brezis--Nirenberg  \bf\thanks{{\em MSC2020:} Primary 58E05, Secondary 35A15, 49J27, 58E07
\newline \indent\; {\em Key Words and Phrases:} scaled problems, solutions with prescribed energy, existence, multiplicity, critical, scaled Nehari manifold, Schr\"{o}dinger--Poisson--Slater equation}}
\author{\bf Kanishka Perera\\
Department of Mathematics\\
Florida Institute of Technology\\
150 W University Blvd, Melbourne, FL 32901-6975, USA\\
\em kperera@fit.edu\\
[\medskipamount]
\bf Kaye Silva\\
Instituto de Matem\'{a}tica e Estat\'{i}stica\\
Universidade Federal de Goi\'{a}s\\
Rua Samambaia, 74001-970 Goi\^{a}nia, GO, Brazil\\
\em kayesilva@ufg.br}
\date{}
\begin{document}

\maketitle

\begin{abstract}
With aid of the Pohozaev's identity and Nehari manifold, we prove the existence and multiplicity of solutions to $N$-dimensional Schr\"{o}dinger--Poisson--Slater type equations involving critical exponents, by considering prescribed energy solutions. 
\end{abstract}

\begin{center}
	\begin{minipage}{12cm}
		\tableofcontents
	\end{minipage}
\end{center}

\newpage

\section{Introduction}
In this work we are interested in the following Schr\"{o}dinger--Poisson--Slater type equation
\begin{equation} \label{103}
	- \Delta u +\left(I_\alpha \star |u|^p\right) |u|^{p-2}u = \lambda\, |u|^{r-2} u + |u|^{2^*-2}\, u \quad \text{in } \R^N,
\end{equation}
where $N\ge 3$,  $\alpha\in (0,N)$, $1< p< (N+\alpha)/(N-2)$, $r>1$, $2^*=2N/(N-2)$, $\lambda>0$ is a real parameter and $I_\alpha:\mathbb{R}^N\to \mathbb{R}$ is the Riesz potential of order $\alpha$, defined for $x\in \mathbb{R}^N\setminus\{0\}$ as 
\begin{equation*}
	I_\alpha(x)=\frac{A_\alpha}{|x|^{N-\alpha}},\ \ \ A_\alpha=\frac{\Gamma(\frac{N-\alpha}{2})}{\Gamma(\frac{\alpha}{2})\pi^{N/2}2^\alpha}.
\end{equation*}

When $p=\alpha=2$ and $N=3$, equation \eqref{103} has the form 

\begin{equation} \label{10}
	- \Delta u + \left(I_2\star u^2\right) u = h(u) \quad \text{in } \R^3.
\end{equation}
 This equation is related to the Thomas--Fermi--Dirac--von\;Weizs\"acker model in Density Functional Theory (DFT), where the local nonlinearity $h$ takes the form $h(u) = u^{5/3} - u^{7/3}$ (see, e.g., Lieb \cite{MR629207,MR641371}, Le Bris and Lions \cite{MR2149087}, Lu and Otto \cite{MR3251907}, Frank et al.\! \cite{MR3762278}, and references therein). More general local nonlinearities also arise in DFT and quantum chemistry models such as Kohn-Sham's, where $h$ is the so-called exchange-correlation potential and is not explicitly known (see, e.g., Anantharaman and Canc\`es \cite{MR2569902} and references therein). 
 
  The $N$-dimensional version of \eqref{10} was proposed by Bao et al. \cite{BaMaSt} and recently, in Mercuri et al. \cite{MR3568051}, a thorough study of 
 
 \begin{equation*} 
 	- \Delta u +\left(I_\alpha \star |u|^p\right) |u|^{p-2}u =  |u|^{r-2} u\ \text{in } \R^N,
 \end{equation*}
 was accomplished (see also Lions \cite{MR636734} and Ruiz \cite{MR2679375}).

  The purpose of this paper is to study the existence and multiplicity of solutions with prescribed energy for equation \eqref{103} that can be viewed as a subcritical perturbation of
  
  \begin{equation*} 
  	- \Delta u +\left(I_\alpha \star |u|^p\right) |u|^{p-2}u = |u|^{2^*-2}\, u \quad \text{in } \R^N,
  \end{equation*}
which is an equation in the spirit of Brezis and Nirenberg \cite{BrNi}. The case $\alpha=p=2$, $N=3$ of equation \eqref{103} was already studied in Liu et al. \cite{LiZhHu} where existence of a single positive solution was show and in Mercuri and Perera \cite[Section 1.3.2]{MePe2} where multiplicity of solutions were considered.

We look for solutions of \eqref{103} over $E_R^{\alpha,p}(\mathbb{R}^N)$ whose definition is given as follows (see \cite[Section 2]{MR3568051}). The Coulomb space $\mathcal{Q}^{\alpha,p}(\mathbb{R}^N)$ is the vector space of measurable functions $u:\mathbb{R}^N\to \mathbb{R}$ such that
\begin{equation*}
	\|u\|_{\mathcal{Q}^{\alpha,p}}=\left(\int |I_{\alpha/2}\star |u|^p|^2\right)^{\frac{1}{2p}}<\infty.
\end{equation*}
The Coulomb-Sobolev space $E^{\alpha,p}(\mathbb{R}^N)$ is a subspace of functions $u\in \mathcal{Q}^{\alpha,p}(\mathbb{R}^N)$ such that $u$ is weakly differentiable in $\mathbb{R}^N$, $Du\in L^2(\mathbb{R}^N)$ and 

\begin{equation*}
	\|u\|=\left(\int |\nabla u|^2+\left(\int |I_{\alpha/2}\star |u|^p|^2\right)^{1/p}\right)^{1/2}<\infty. 
\end{equation*}
It follows that $E^{\alpha,p}(\mathbb{R}^N)$ is a uniformly convex, reflexive, Banach space with respect to the norm $\|u\|$ (see \cite[Proposition 2.2 and Proposition 2.9]{MR3568051}). We define $E_R^{\alpha,p}(\mathbb{R}^N)$ to be the subspace of radial functions in $E^{\alpha,p}(\mathbb{R}^N)$. Since it is a closed subspace, it is also a uniformly convex, reflexive, Banach space with respect to the norm $\|u\|$.

For simplicity we introduce the following notation
\begin{equation*}
	s_q=\frac{p(2-N)+\alpha+N}{p-1}, \ \ \ 	q=2\frac{2p+\alpha}{2+\alpha}, \ \ \ 	\sigma=\frac{2+\alpha}{2(p-1)},
\end{equation*}
and
\begin{equation*}
s_r=\sigma r -N.
\end{equation*}

 One can easily show that $s_q=\sigma q-N$. Moreover $q$ corresponds to the smaller exponent for which the continuous embedding $E^{\alpha,p}(\mathbb{R}^N)\hookrightarrow L^q(\mathbb{R}^N)$ holds true (see \cite[Theorem 1]{MR3568051}), so $E^{\alpha,p}(\mathbb{R}^N)\hookrightarrow L^\ell(\mathbb{R}^N)$ for all $\ell\in [q,2^*]$. 
 
 Let us show that \eqref{103} has a variational structure. For $u\in E_R^{\alpha,p}(\mathbb{R}^N)$ define
\begin{equation*}
	I(u)=\frac{1}{2}\int |\nabla u|^2+\frac{1}{2p}\int |I_{\alpha/2}\star |u|^p|^2,
\end{equation*}
\begin{equation*}
	F(u)=\frac{1}{r}\int |u|^{r},
\end{equation*}
\begin{equation*}
	G(u)=\frac{1}{2^*}\int |u|^{2^*}, 
\end{equation*}
and
\begin{equation*}
	\Phi_\lambda(u)=I(u)-\lambda F(u)-G(u), u\in E_R^{\alpha,p}(\mathbb{R}^N).
\end{equation*}
Then, solutions of \eqref{103} are critical points of the $C^1$ functional $\Phi_\lambda$. Note that, if 
\begin{equation*}
	u_t(x)=t^\sigma u(tx), u\in E_R^{\alpha,p}(\mathbb{R}^N)\setminus\{0\}, t>0,
\end{equation*} 
then
\begin{equation}\label{scaling}
	I(u_t)=t^{s_q}I(u),\ \ F(u_t)=t^{s_r}F(u),\ \ G(u_t)=t^{s_{2^*}}F(u).
\end{equation}

Given $c \in \R$, we are interested in finding pairs $(\lambda,u) \in \R \times E_R^{\alpha,p}(\mathbb{R}^N)$ such that $\Phi_\lambda'(u) = 0$ and $\Phi_\lambda(u) = c$. For $u \ne 0$, the equation $\Phi_\lambda(u) = c$ has the unique solution for $\lambda$ given by
\[
\lambda_c(u) = \frac{I(u) - G(u) - c}{F(u)}, \quad u \in E_R^{\alpha,p}(\mathbb{R}^N) \setminus \set{0}.
\]
For $c \in \R$, $u \in E_R^{\alpha,p}(\mathbb{R}^N) \setminus \set{0}$ is a critical point of $\Phi_\lambda$ with critical value $c$ if and only if it is a critical point of $\lambda_c$ with critical value $\lambda$ (see Proposition \ref{Proposition 7}). We will use this observation to study solutions of the equation \eqref{103} with energy $c$.

We will use a Nehari type manifold to study critical points of the functional $\lambda_c$. However, the standard Nehari manifold (see, e.g., Szulkin and Weth \cite{MR2768820}) is not suitable for this purpose because of the form of $\lambda_c$. Therefore we introduce a new manifold based on the following scaling property \eqref{scaling}:

 Set
\[
\varphi_{c,u}(t) = \lambda_c(u_t), \quad u \in E_R^{\alpha,p}(\mathbb{R}^N)\setminus\{0\},\, t > 0
\]
and let
\[
\N_c = \set{u_t : u \in E_R^{\alpha,p}(\mathbb{R}^N)\setminus\{0\},\, t > 0,\, \varphi_{c,u}'(t) = 0}.
\]
We will show that $\N_c$ is a $C^1$-Finsler manifold and a natural constraint for $\lambda_c$ (see Lemma \ref{NEHARI} and Lemma \ref{Lemma 6}). We will refer to it as the scaled Nehari manifold and study critical points of $\restr{\lambda_c}{\N_c}$ using a variational framework for scaled functionals recently developed in Mercuri and Perera \cite{MePe2}. We note here that the results of Leite et al. \cite{LeQuSi} cannot be applied in our context since their fibering maps are different.

We define the best Sobolev embedding constant. It is the biggest constant satisfying the inequality
\begin{equation*}
	S\|u\|_{2^*}^2\le \|\nabla u\|_2^2, u\in E_r^{\alpha,p}(\mathbb{R}^N).
\end{equation*} 
In what follows, $\lambda_k$ is the sequence of eigenvalues of the eigenvalue problem \eqref{1} given in Theorem \ref{Theorem 7} in the next section.
\begin{theorem}\label{thm1} Suppose $r\in [q,2^*)$ and
	\begin{equation*}
	c^*=\frac{1}{2}\frac{s_{2^*}-s_r}{s_{2^*}}\left(\frac{2^*}{2}\frac{s_q}{s_{2^*}}\right)^{\frac{s_q}{s_{2^*}-s_r}}S^{\frac{s_{2^*}}{s_{2^*}-s_r}}.
	\end{equation*} 
	Then, there exists a sequence $\tilde{\lambda}_k$ satisfying $\tilde{\lambda}_k\ge 0$, $\tilde{\lambda}_k\le \tilde{\lambda}_{k+1}$ for all $k\in \mathbb{N}$, and such that:
	\begin{enumroman}
		\item Equation \eqref{103} has no nontrivial solution with energy $c \le 0$. For each $c \in (0,c^*)$, there exists a sequence $\lambda_{c,k} \nearrow \infty$ as $k\to \infty$ such that equation \eqref{103} with $\lambda = \lambda_{c,k}$ has a nontrivial solution $u_{c,k}$ with $\Phi_{\lambda_{c,k}}(u_{c,k}) = c$.
		\item If $r>q$ and  $\lambda> \tilde{\lambda}_k$, then equation \eqref{103} has at least $k$ solutions.
		\item If $r=q$, $\alpha>1$ and $\lambda\in (\tilde{\lambda}_k,\lambda_k)$, then equation \eqref{103} has at least one solution.
		\item $\tilde{\lambda}_1=0$ in the following cases:
			\begin{equation}\label{cases}
			\begin{cases}
			\frac{2Np}{N+\alpha} > \frac{N}{N-2}, N+\alpha-(N-2)(p+1)\ge 0, (N-2)r-4>0,\\
			\frac{2Np}{N+\alpha} > \frac{N}{N-2}, N+\alpha-(N-2)(p+1)< 0, 2\alpha+(N-2)(r-2p)>0, \\
			\frac{2Np}{N+\alpha} = \frac{N}{N-2},4+\alpha-N>0, (N-2)r-4>0, \\
			\frac{2Np}{N+\alpha} = \frac{N}{N-2}, 4+\alpha-N\le 0, \alpha-N+(N-2)r>0, \\
			\frac{2Np}{N+\alpha} < \frac{N}{N-2}, (N-2)r-4>0. \\
		\end{cases}
		\end{equation}
	\end{enumroman}
\end{theorem}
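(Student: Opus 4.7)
The plan is to follow the scaled variational framework of Mercuri--Perera \cite{MePe2} combined with a Brezis--Nirenberg style concentration-compactness analysis. First I would unpack the fibering map using the scaling identities \eqref{scaling}, obtaining
\[
\varphi_{c,u}(t) = \frac{t^{s_q}\, I(u) - t^{s_{2^*}}\, G(u) - c}{t^{s_r}\, F(u)}.
\]
Since $r \in [q,2^*)$, one has $s_q \le s_r < s_{2^*}$, and an elementary fibering analysis shows that for $c \in (0,c^*)$ the map $\varphi_{c,u}$ admits a unique positive critical point, a global maximum. This identifies $\N_c$ with a topological sphere in $E_R^{\alpha,p}(\R^N)$ and yields a positive lower bound for $\restr{\lambda_c}{\N_c}$. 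The explicit threshold $c^*$ is obtained by optimizing this scaling balance with the aid of $S\norm[2^*]{u}^2 \le \norm[2]{\nabla u}^2$ evaluated on an Aubin--Talenti extremal; the algebra reproduces exactly the stated formula.

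Next I would establish the Palais--Smale condition for $\lambda_c$ on $\N_c$ below the threshold inherited from $c^*$. By Proposition \ref{Proposition 7}, a (PS)-sequence $(u_n)$ for $\lambda_c$ at level $\lambda$ induces an almost-critical sequence of $\Phi_\lambda$ at energy $c$; Pohozaev's identity and the Nehari constraint together control $\norm[2]{\nabla u_n}$ and the Coulomb term. Radiality precludes translational loss of mass, and concentration into a Talenti bubble is ruled out precisely because $c < c^*$. With (PS) in hand, part (i) follows from the Krasnoselskii genus minimax
\[
\lambda_{c,k} = \inf_{A \in \Gamma_k} \sup_{u \in A} \lambda_c(u), \quad \Gamma_k = \set{A \subset \N_c : A = -A,\ A\text{ closed},\ \gamma(A) \ge k},
\]
which produces critical values $\lambda_{c,k} \nearrow \infty$. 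The nonexistence claim for $c \le 0$ follows from $\varphi_{c,u}'(1) = 0$, which rearranges into $(s_r - s_q)\, I(u) + (s_{2^*} - s_r)\, G(u) = s_r\, c$; since all three coefficients and $I(u),\, G(u)$ are strictly positive for any nontrivial $u$, the right-hand side must be strictly positive.

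For parts (ii) and (iii), I would set $\tilde{\lambda}_k = \inf_{c \in (0,c^*)} \lambda_{c,k}$ and show that $c \mapsto \lambda_{c,k}$ is continuous on $(0,c^*)$ and tends to $\infty$ at both endpoints, so that it sweeps out $[\tilde{\lambda}_k,\infty)$; intersecting the graphs for $j = 1,\ldots,k$ then delivers $k$ prescribed-energy pairs for every $\lambda > \tilde{\lambda}_k$. The case $r = q$ is delicate because $s_r = s_q$ collapses the first term in the fibering identity and $\lambda_c$ acquires an eigenvalue-problem character; the extra upper bound $\lambda < \lambda_k$ comes from comparison with \eqref{1}, and the hypothesis $\alpha > 1$ is exactly what makes this comparison admissible. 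For part (iv), I would test $\lambda_c$ on cut-offs $U_\epsilon$ of an Aubin--Talenti instanton at a varying scale $t_\epsilon$ and show $\inf_{t>0} \lambda_c((U_\epsilon)_t) \to 0$ as $\epsilon \to 0^+$; the five cases in \eqref{cases} correspond to which of $I,\, F,\, G$ dominates in the $\epsilon$-expansion, and each listed inequality is precisely the sign condition needed to push $\lambda_c$ down to $0$. The hardest step will be this last one, since each of the five regimes demands a separate expansion of the Coulomb convolution $\int |I_{\alpha/2} \star U_\epsilon^p|^2$ balanced against the critical and subcritical Lebesgue norms of $U_\epsilon$, and keeping everything below $c^*$ requires meticulous bookkeeping of several competing scaling rates.
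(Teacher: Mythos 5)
Your overall architecture (the functional $\lambda_c$, the scaled Nehari manifold $\N_c$, minimax values $\lambda_{c,k}$, continuity of the curves $c\mapsto\lambda_{c,k}$, Talenti estimates for $\tilde\lambda_1=0$) is the same as the paper's, but there is a genuine error at the step that actually produces $\tilde\lambda_k$ and parts (ii)--(iv): you assert that $c\mapsto\lambda_{c,k}$ ``tends to $\infty$ at both endpoints'' of $(0,c^*)$. In fact this map is decreasing in $c$: for fixed $u\in\M$ the implicit function theorem applied to $H(u_{t_c(u)})=s_rc$ gives $\partial\widetilde\Lambda_c(u)/\partial c=-t_c(u)^{-s_r}/F(u)<0$, and this monotonicity is exactly what the paper uses (Lemma \ref{Lemma 1}, Lemma \ref{Lemma 3}, Propositions \ref{Proposition 1} and \ref{Proposition 4}) to obtain continuity, the \emph{finite} limit $\tilde\lambda_k=\lim_{c\to(c^*)^-}\lambda_{c,k}\ge 0$, and then the count of intersections of the curves $C_1,\dots,C_k$ with the line $L_\lambda$ in (ii). Your picture is also internally inconsistent with (iv): if the curve were continuous and blew up at both endpoints, its infimum would be attained at some interior $c\in(0,c^*)$, where $\lambda_{c,1}\ge\inf_{\N_c}\Lambda_c>0$ by Lemma \ref{lbb}, so $\tilde\lambda_1=0$ could never occur. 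In the paper $\tilde\lambda_1=0$ is obtained precisely by sending $c\to(c^*)^-$ coupled with the Talenti parameter (one takes $c(\eps)=c^*-\eps F(v_\eps)$ in Proposition \ref{Proposition 4}); your sketch of (iv) tests at a fixed $c$ and omits this coupling, so as written parts (ii)--(iv) do not go through.

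Two further gaps. First, your Palais--Smale argument is asserted, not proved: the threshold $c^*$ constrains the prescribed \emph{energy} $c$, not the minimax level of $\lambda_c$, so the standard ``no Talenti bubble below the level'' reasoning does not apply verbatim; the paper instead proves (PS) directly (bounded Lagrange multipliers, $I'(u_n)(u_n-u)\to 0$, uniform convexity), with $c<c^*$ entering only through Lemma \ref{ineinfipositive} and Corollary \ref{cinfinity}. Second, for (iii) you need $\lim_{c\to 0^+}\lambda_{c,k}=\lambda_k$, where the $\lambda_k$ of the statement are the eigenvalues of \eqref{1} defined through the $\Z_2$-cohomological index (Theorem \ref{Theorem 7}, from \cite{MePe2}); defining $\lambda_{c,k}$ with the Krasnoselskii genus instead of the index breaks this identification, on which Proposition \ref{Proposition 2}(ii) rests (the role of $\alpha>1$ is coercivity of $\Lambda_c$ when $r=q$, not ``admissibility of the comparison''). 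A smaller point: nonexistence for $c\le 0$ requires knowing that a nontrivial solution with energy $c$ satisfies $\varphi_{c,u}'(1)=0$, which is Lemma \ref{Lemma 8} and rests on the Pohozaev identity \eqref{pohozaev}, since the dilation $u_t(x)=t^\sigma u(tx)$ does not produce the Nehari identity; also when $r=q$ the coefficient $s_r-s_q$ vanishes, so your ``all three coefficients strictly positive'' needs the correction that $(s_{2^*}-s_r)G(u)>0$ alone forces $c>0$.
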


We note here that, in the cases \eqref{cases}, Theorem \ref{thm1} guarantees the existence of solutions for all $\lambda>0$, if $r>q$, and for small $\lambda$, if $r=q$. Moreover, when $c>c^*$, the infimum of $\lambda_c$ over $\mathcal{N}_c$ is negative (see Proposition \ref{lbb2}) and, since the Pohozaev's identity (see \eqref{POHOZAEV}) implies non-existence of positive solutions for $\lambda\le 0$, we are forced to conclude (from Proposition \ref{Proposition 7}) that infimum cannot be attained. See also Remark \ref{rnexistence} for a brief discussion on non-existence of solutions when \eqref{cases} are not satisfied.

It is also worth noting that the technique used here can be used in many other types of equations involving critical terms in the spirit of Brezis--Nirenberg. In fact, it has already been used, in the context of a generalization of Szulkin and Weth \cite{MR2768820} results, to functionals of class $C^1(X\setminus\{0\})$ (see Leite et al. \cite[Section 4.3]{LeQuSi}). We also point out the work of Ramos Quoirin et al.\! \cite{MR4736027} and the references therein, where many types of prescribed energy equations are studied.

Unless otherwise stated all integrals are over $\mathbb{R}^N$ with respect to the Lebesgue measure. Moreover, when considering sequences, the passage to subsequences and its respective topological properties will be used without renaming it.

\section{Technical Results}

\subsection{Topological Properties of $\lambda_c$}
In this section we study the functional $\lambda_c$. From now on we assume that 
\begin{enumerate}
	\item[$(H_1)$] $r\in [q,2^*)$ and, if $r=q$, we also impose $\alpha>1$. 
\end{enumerate}
Note that, since $s_q>0$, then $s_r>0$ for all $r\in [q,2^*]$. Moreover $s_{r_2}-s_{r_1}=\sigma (r_2-r_1)>0$ for all $r_1,r_2\in [q,2^*]$ with $r_2>r_1$. This will be used throughout the paper. 

Given $c\in \mathbb{R}$ note that for all $u\in W\setminus\{0\}$ the equation $\Phi_\lambda(u)=c$ has a unique solution with respect to $\lambda$, given by
\begin{equation*}
	\lambda_c(u)=\frac{I(u)-G(u)-c}{F(u)}.
\end{equation*}
\begin{proposition} \label{Proposition 7}
	For $c \in \R$, $u \in E_R^{\alpha,p}(\mathbb{R}^N)\setminus\{0\}$ is a critical point of $\Phi_\lambda$ with critical value $c$ if and only if it is a critical point of $\lambda_c$ with critical value $\lambda$.
\end{proposition}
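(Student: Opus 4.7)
The proof is essentially a direct computation based on the definition
\[
\lambda_c(u) = \frac{I(u) - G(u) - c}{F(u)},
\]
and the key point is simply to verify that the quotient rule gives the required equivalence, provided $F(u) \neq 0$. Since $r \in [q,2^*)$ by hypothesis $(H_1)$, the embedding $E^{\alpha,p}_R(\mathbb{R}^N) \hookrightarrow L^r(\mathbb{R}^N)$ guarantees $F$ is well-defined and of class $C^1$ on $E^{\alpha,p}_R(\mathbb{R}^N)$, and moreover $F(u) = \frac{1}{r}\int |u|^r > 0$ for every $u \neq 0$, so dividing by $F(u)$ is legitimate throughout.

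My plan is first to establish the forward implication. Assume $\Phi_\lambda'(u) = 0$ and $\Phi_\lambda(u) = c$. The scalar identity $I(u) - \lambda F(u) - G(u) = c$ rearranges exactly to $\lambda_c(u) = \lambda$. Differentiating $\lambda_c$ by the quotient rule yields
\[
\lambda_c'(u) = \frac{[I'(u) - G'(u)]F(u) - [I(u) - G(u) - c]\,F'(u)}{F(u)^2}.
\]
Substituting $I'(u) - G'(u) = \lambda F'(u)$ (from $\Phi_\lambda'(u)=0$) and $I(u) - G(u) - c = \lambda F(u)$ (from $\Phi_\lambda(u)=c$) collapses the numerator to zero, giving $\lambda_c'(u) = 0$.

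For the converse, assume $\lambda_c'(u) = 0$ and $\lambda_c(u) = \lambda$. The scalar identity again rearranges to $\Phi_\lambda(u) = c$. From $\lambda_c'(u)=0$ and $F(u) \neq 0$, the numerator of the quotient vanishes, i.e.
\[
[I'(u) - G'(u)]F(u) = [I(u) - G(u) - c]\,F'(u) = \lambda F(u) F'(u),
\]
and cancelling $F(u)$ gives $I'(u) - G'(u) = \lambda F'(u)$, i.e. $\Phi_\lambda'(u) = 0$.

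There is no genuine obstacle here; the only thing to be careful about is that $F(u) \neq 0$ so that the division in the definition of $\lambda_c$ and in its derivative is valid, and that $\lambda_c$ is actually $C^1$ on $E^{\alpha,p}_R(\mathbb{R}^N) \setminus \{0\}$. Both follow immediately from the embedding $E^{\alpha,p}_R(\mathbb{R}^N) \hookrightarrow L^r(\mathbb{R}^N)$ noted earlier in the paper, together with the standard fact that $I$, $F$, $G$ are of class $C^1$ on $E^{\alpha,p}_R(\mathbb{R}^N)$.
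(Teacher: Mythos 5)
Your proof is correct and is essentially the paper's argument: the paper simply rewrites $\lambda_c(u) = \frac{\Phi_\lambda(u)-c}{F(u)} + \lambda$ and differentiates, which is the same quotient-rule computation you carry out directly on the definition, with the same observation that $F(u)>0$ for $u\neq 0$ makes everything legitimate. No further changes are needed.
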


\begin{proof}
	We have
	\begin{equation} \label{25}
		\lambda_c(u) = \frac{\Phi_\lambda(u) - c}{F(u)} + \lambda
	\end{equation}
	and
	\begin{equation} \label{9}
		\lambda_c'(u) = \frac{\Phi_\lambda'(u) - (\lambda_c(u) - \lambda)\, F'(u)}{F(u)}.
	\end{equation}
	So $\Phi_\lambda(u) = c$ and $\Phi_\lambda'(u) = 0$ if and only if $\lambda_c(u) = \lambda$ and $\lambda_c'(u) = 0$.
\end{proof}

Note by \eqref{scaling} that
\begin{equation*}
	\varphi_{c,u}(t)=\frac{t^{s_q-s_r}I(u)-t^{s_{2^*}-s_r}G(u)-t^{-s_r}c}{F(u)},
\end{equation*}
and let 
\begin{equation*}
	\mathcal{N}_c=\{u_t: u\in E_R^{\alpha,p}(\mathbb{R}^N)\setminus\{0\}, t>0, \varphi_{c,u}'(t)=0\}.
\end{equation*}
We will also consider the following set 
\begin{equation*}
	\mathcal{M}=\{u\in E_R^{\alpha,p}(\mathbb{R}^N):I(u)=1\}.
\end{equation*}
\begin{proposition}\label{NEHARI} There holds:
	\begin{enumerate}
		\item[(i)] $\mathcal{M}$ is a $C^1$-Finsler manifold, which is complete, symmetric.
		\item[(ii)] 	$\mathcal{N}_c\neq\emptyset $ if, and only if, $c>0$. Moreover, for each $c>0$ and $u\in E_R^{\alpha,p}(\mathbb{R}^N)\setminus\{0\}$, the equation $\varphi_{c,u}'(t)=0$ has unique solution at $t:=t_c(u)$, which corresponds to a global maximizer of $\varphi_{c,u}$. Then $\mathcal{N}_c$ is a $C^1$-Finsler manifold, which is complete, symmetric and homeomorphic to $\mathcal{M}$ via an odd homeomorphism and can also be represented as
		\begin{eqnarray*}
			\mathcal{N}_c&=&\{u\in W\setminus\{0\}:\varphi_{c,u}'(1)=0 \}\\
			&=& \{u\in W\setminus\{0\}: H(u):=(s_r-s_q)I(u)+(s_{2^*}-s_r)G(u)=s_rc\},
		\end{eqnarray*}
		or 
		\begin{equation*}
			\mathcal{N}_c=\{u_{t_c(u)}: u\in \mathcal{M}\}.
		\end{equation*}
	\end{enumerate}

\end{proposition}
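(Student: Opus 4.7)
For (i), I will verify that $I \in C^1(E_R^{\alpha,p}(\R^N),\R)$ (standard given the Coulomb--Sobolev embedding framework of \cite{MR3568051}) and that $1$ is a regular value of $I$, using
\[
I'(u)\,u = \int |\nabla u|^2 + \int |I_{\alpha/2}\star |u|^p|^2 > 0 \quad \text{for } u \ne 0.
\]
The implicit function theorem then presents $\M = I^{-1}(1)$ as a codimension-one $C^1$-Finsler submanifold; completeness follows because $\M$ is closed in the Banach space, and $I(-u) = I(u)$ gives symmetry.

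For (ii), the scaling identity \eqref{scaling} gives
\[
\varphi_{c,u}(t)\,F(u) = t^{s_q - s_r}\,I(u) - t^{s_{2^*} - s_r}\,G(u) - t^{-s_r}\,c,
\]
whose critical-point equation, after clearing the factor $t^{s_r+1}$, is
\[
(s_r - s_q)\,t^{s_q}\,I(u) + (s_{2^*} - s_r)\,t^{s_{2^*}}\,G(u) = s_r\,c.
\]
Under $(H_1)$ we have $0 < s_q \le s_r < s_{2^*}$, so the left-hand side is continuous and strictly increasing on $(0,\infty)$, with range $(0,\infty)$ (when $r = q$ the first term vanishes, but $G(u) > 0$ keeps the second strictly increasing). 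Together with $s_r > 0$, this yields a unique positive solution $t_c(u)$ precisely when $c > 0$, establishing both $\N_c \ne \emptyset \iff c > 0$ and the uniqueness statement. The maximizer claim follows because $\varphi_{c,u}(t) \to -\infty$ at both endpoints of $(0,\infty)$: the term $-t^{-s_r}c$ dominates as $t \to 0^+$ (since $-s_r < s_q - s_r$), while $-t^{s_{2^*}-s_r}G(u)$ dominates as $t \to \infty$.

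For the descriptions of $\N_c$ and its manifold structure, the key identity is $(u_t)_s = u_{ts}$, which yields $\varphi_{c,u_t}(s) = \varphi_{c,u}(ts)$ and hence $u_t \in \N_c \iff \varphi_{c,u_t}'(1) = 0$; evaluating the derivative formula at $t = 1$ rewrites this last condition as $H(u_t) = s_r c$, giving the two promised equalities. For any $v \in \N_c$, rescaling by $s = I(v)^{-1/s_q}$ produces $u = v_s \in \M$ with $v = u_{t_c(u)}$ by uniqueness, so $u \mapsto u_{t_c(u)}$ is a bijection $\M \to \N_c$. Its continuity and that of its inverse $v \mapsto v_{I(v)^{-1/s_q}}$ follow from the implicit function theorem applied to $\varphi_{c,u}'(t) = 0$, whose $t$-derivative at $t_c(u)$ is strictly positive by the monotonicity established above; oddness is immediate from the evenness of $I,F,G$ and the identity $(-u)_t = -u_t$. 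To endow $\N_c$ itself with a $C^1$-Finsler structure I use the representation $\N_c = H^{-1}(s_r c)$ and the computation
\[
H'(u)\cdot(\sigma u + x\cdot\nabla u) = \frac{d}{dt}H(u_t)\Big|_{t=1} = (s_r - s_q)\,s_q\,I(u) + (s_{2^*} - s_r)\,s_{2^*}\,G(u) > 0,
\]
so that $s_r c$ is a regular value of $H$; completeness and symmetry of $\N_c$ are then immediate.

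The only mildly delicate point is the degenerate case $r = q$, where $s_r - s_q = 0$ causes the first term in the defining equation (and in the computation of $H'$) to vanish; one then relies on $s_{2^*} > s_r$ and the strict positivity of $G$ on $E_R^{\alpha,p}(\R^N)\setminus\{0\}$ to keep both the monotonicity argument and the non-vanishing of $H'$ alive. No other part of the argument needs to change.
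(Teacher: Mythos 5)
Your argument follows essentially the same route as the paper: $\M=I^{-1}(1)$ with $I$ even and $I'(u)\neq 0$ for $u\neq 0$; the cleared equation $(s_r-s_q)t^{s_q}I(u)+(s_{2^*}-s_r)t^{s_{2^*}}G(u)=s_rc$ with a strictly increasing left-hand side of range $(0,\infty)$, giving existence/uniqueness of $t_c(u)$ exactly for $c>0$ and the global-maximizer property; the scaling identity $(u_t)_s=u_{ts}$ for the two representations of $\N_c$; and the scaling projection $u\mapsto u_{I(u)^{-1/s_q}}$ (respectively its inverse $u\mapsto u_{t_c(u)}$) for the odd homeomorphism with $\M$. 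You in fact supply the details the paper dismisses as ``clear''/``obvious'', which is welcome.

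One step, however, does not work as written: to show that $s_rc$ is a regular value of $H$ you evaluate $H'(u)$ on the dilation direction $\sigma u+x\cdot\nabla u$, identifying this with $\frac{d}{dt}H(u_t)\big|_{t=1}$. For a general $u\in E_R^{\alpha,p}(\R^N)$ the function $x\cdot\nabla u$ need not belong to the space, and the curve $t\mapsto u_t$ need not be differentiable in the norm topology, so neither the pairing $H'(u)(\sigma u+x\cdot\nabla u)$ nor the chain-rule identity is justified. The fix is immediate and is what the paper's remark ``$u=0$ is the only critical point of $H$'' amounts to: test $H'(u)$ against $u$ itself, since
\[
H'(u)u=(s_r-s_q)\Bigl(\int|\nabla u|^2+\int|I_{\alpha/2}\star|u|^p|^2\Bigr)+(s_{2^*}-s_r)\,2^*G(u)\ \ge\ (s_{2^*}-s_r)\int|u|^{2^*}>0
\]
for $u\neq 0$, so $H'(u)\neq 0$ on $\N_c$. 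A second, purely cosmetic slip: the $t$-derivative of $\varphi_{c,u}'$ at the global maximizer $t_c(u)$ is strictly \emph{negative}, not positive; for the implicit function theorem all you need is that it is nonzero (equivalently, apply the theorem to the cleared, strictly increasing equation, whose $t$-derivative is indeed positive). With these two repairs your proof is complete and coincides in substance with the paper's.
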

\begin{proof} $(i)$ This is clear since $I$ is continuous and even, and $I'(u)=0$ if, and only if, $u=0$. 
	
	$(ii)$ That $\mathcal{N}_c\neq\emptyset $ if, and only if, $c>0$ is obvious. It is also clear that for each $c>0$ and $u\in E_R^{\alpha,p}(\mathbb{R}^N)\setminus\{0\}$, the equation $\varphi_{c,u}'(t)=0$ has unique solution at $t:=t_c(u)$, which corresponds to a global maximizer of $\varphi_{c,u}$, therefore it follows that
	\begin{equation*}
		t_c(u_t)=\frac{1}{t}t_c(u), u\in E_R^{\alpha,p}(\mathbb{R}^N)\setminus\{0\}, t>0,
	\end{equation*}
	which implies that $t_c(t_c(u)u)=1$ and hence the desired equalities concerning $\mathcal{N}_c$. Now observe that $u=0$ is the only critical point of $H$, so 	$\mathcal{N}_c$ is a $C^1$-Finsler manifold and since $H$ is continuous and even, $\mathcal{N}_c$ is complete and symmetric.  To conclude the proof we note that if $u\in E_R^{\alpha,p}(\mathbb{R}^N)\setminus\{0\}$, then $\pi(u)=u_{I(u)^{\frac{-1}{s_q}}}\in \mathcal{M}$ is a continuous projection from $E_R^{\alpha,p}(\mathbb{R}^N)\setminus\{0\}$ to $\mathcal{M}$ that, when restricted to $\mathcal{N}_c$, gives us the desired homeomorphism.
\end{proof}
In order to proceed we need to establish a Pohozaev's identity. In fact, as in \cite[Proposition 5.5]{MR3568051}, one can show that if $\Phi'_\lambda(u)=0$, then
\begin{equation}\label{POHOZAEV}
P(u):=	\frac{N-2}{2}\int |\nabla u|^2+\frac{N+\alpha}{2p}\int |I_{\alpha/2}\star |u|^p|^2-\frac{N\lambda}{r}\int |u|^r-\frac{N}{2^*}\int |u|^{2^*}=0.
\end{equation}
Since $\Phi_\lambda'(u)=0$, we can compute $\sigma \Phi_\lambda'(u)-P(u)=0$ to conclude that
\begin{equation}\label{pohozaev}
	\frac{s_q}{2}\int |\nabla u|^2+\frac{s_q}{2p}\int |I_{\alpha/2}\star |u|^p|^2-\frac{s_r\lambda}{r}\int |u|^r-\frac{s_{2^*}}{2^*}\int |u|^{2^*}=0.
\end{equation}

The importance of $\mathcal{N}_c$ comes from 
\begin{lemma} \label{Lemma 8}
	If $u \in E_R^{\alpha,p}(\mathbb{R}^N) \setminus \set{0}$ is a solution of the equation \eqref{103} with $\Phi_\lambda(u) = c$, then $u \in \N_c$.
\end{lemma}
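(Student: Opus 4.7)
The plan is to use the alternative characterization of $\mathcal{N}_c$ given in Proposition \ref{NEHARI}(ii), namely
\[
\mathcal{N}_c = \set{v \in E_R^{\alpha,p}(\mathbb{R}^N)\setminus\{0\} : H(v) = s_r c},
\]
where $H(v) = (s_r-s_q)I(v) + (s_{2^*}-s_r)G(v)$. So it suffices to verify $H(u) = s_r c$.

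To produce this identity, I would combine two pieces of information about the solution $u$. The first is the scaled Pohozaev identity \eqref{pohozaev}, which in the notation of $I$, $F$, $G$ reads
\[
s_q I(u) - s_r\lambda F(u) - s_{2^*} G(u) = 0.
\]
The second is the energy constraint $\Phi_\lambda(u) = c$, i.e.\ $I(u) - \lambda F(u) - G(u) = c$, which rearranges as $\lambda F(u) = I(u) - G(u) - c$. Substituting this into the Pohozaev identity and collecting terms yields
\[
(s_r-s_q)I(u) + (s_{2^*}-s_r)G(u) = s_r c,
\]
i.e.\ $H(u) = s_r c$, which by Proposition \ref{NEHARI}(ii) places $u$ in $\N_c$. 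As a sanity check, this is exactly the equation $\varphi_{c,u}'(1) = 0$ obtained by differentiating the explicit formula for $\varphi_{c,u}$ given just before Proposition \ref{NEHARI} and setting $t=1$, which is consistent with the second characterization of $\N_c$ in that proposition.

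There is no real obstacle: the entire argument is a one-line linear combination of the Pohozaev identity and the energy constraint. The only point that needs to be invoked, rather than derived, is the Pohozaev identity \eqref{POHOZAEV} itself for solutions of \eqref{103}, which is already recorded in the paper (with reference to \cite[Proposition 5.5]{MR3568051}) and from which \eqref{pohozaev} follows by forming $\sigma\Phi_\lambda'(u) - P(u) = 0$. Once \eqref{pohozaev} is in hand, the verification of $u \in \N_c$ is immediate.
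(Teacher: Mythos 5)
Your argument is correct and is essentially the paper's own proof: both combine the scaled Pohozaev identity \eqref{pohozaev} with the energy constraint $I(u)-\lambda F(u)-G(u)=c$ to obtain $(s_r-s_q)I(u)+(s_{2^*}-s_r)G(u)=s_r c$ and then invoke Proposition \ref{NEHARI}. Your writeup is in fact slightly cleaner, since the paper's phrasing ``$H(u)=0$'' is a small slip for the condition $H(u)=s_r c$ that you state explicitly.
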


\begin{proof}
	We have by \eqref{pohozaev} that
	\[
	s_q\, I(u) -s_r\lambda F(u)-s_{2^*}G(u)=0.
	\]
Since $I(u) - \lambda F(u) - G(u) = c$, so $H(u)= 0$ and by Proposition \ref{NEHARI}, $u\in \mathcal{N}_c$.
\end{proof}

For $c>0$ we define
\begin{equation*}
	\Lambda_c=(\lambda_{c})_{|\mathcal{N}_c}.
\end{equation*}

The next result shows that $\mathcal{N}_c$ is a natural constraint to $\lambda_c$.
\begin{lemma}\label{Lemma 6}
	If $\Lambda_c'(u)=0$, then $\lambda_c'(u)=0$.
\end{lemma}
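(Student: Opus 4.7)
The strategy is a Lagrange-multiplier argument in which the multiplier is pinned down by testing along the scaling curve $t \mapsto u_t$. Since by Proposition \ref{NEHARI}(ii) the set $\mathcal{N}_c$ is the $C^1$ level set $\{H = s_r c\}$ and $H'(u) \neq 0$ on it, the condition $\Lambda_c'(u) = 0$ produces some $\mu \in \R$ with
\[
\lambda_c'(u) = \mu\, H'(u),
\]
and the task reduces to showing $\mu = 0$.

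To this end, I would exploit the scaling direction. On one hand, because $u \in \mathcal{N}_c$, Proposition \ref{NEHARI}(ii) already gives $\varphi'_{c,u}(1) = 0$. On the other, the scaling identities \eqref{scaling} yield the explicit formula
\[
H(u_t) = (s_r - s_q)\, t^{s_q}\, I(u) + (s_{2^*} - s_r)\, t^{s_{2^*}}\, G(u),
\]
so
\[
\frac{d}{dt}\bigg|_{t=1} H(u_t) = (s_r - s_q)\, s_q\, I(u) + (s_{2^*} - s_r)\, s_{2^*}\, G(u) > 0,
\]
the strict positivity coming from $I(u), G(u) > 0$ for $u \neq 0$ combined with the inequalities $s_{2^*} > s_r \geq s_q > 0$ valid under $(H_1)$. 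Chain rule along $t \mapsto u_t$, together with $\lambda_c'(u) = \mu H'(u)$, then gives
\[
0 = \varphi'_{c,u}(1) = \frac{d}{dt}\bigg|_{t=1}\lambda_c(u_t) = \mu\,\frac{d}{dt}\bigg|_{t=1}H(u_t),
\]
which forces $\mu = 0$ and hence $\lambda_c'(u) = 0$.

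The main technical point is the chain rule in the last display, since the formal tangent $\sigma u + x \cdot \nabla u$ of the scaling curve at $t = 1$ need not belong to $E_R^{\alpha,p}(\R^N)$ for general $u$. I would circumvent this by a direct difference-quotient estimate: using $\lambda_c'(u) - \mu H'(u) = 0$ together with the $C^1$ regularity of $\lambda_c$ and $H$,
\[
\frac{\lambda_c(u_t) - \lambda_c(u)}{t - 1} - \mu\,\frac{H(u_t) - H(u)}{t - 1} = o\!\left(\frac{\|u_t - u\|}{|t-1|}\right),
\]
and one checks via the explicit scaling formulas (or through a density argument with smooth compactly supported radial functions, for which the tangent genuinely lies in $E_R^{\alpha,p}(\R^N)$) that $\|u_t - u\|/|t-1|$ stays bounded as $t \to 1$. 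Passing to the limit then delivers the chain-rule identity needed above and completes the argument.
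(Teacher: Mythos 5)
Your overall scheme (Lagrange multiplier $\lambda_c'(u)=\mu H'(u)$, then testing in the scaling direction and using $\varphi_{c,u}'(1)=0$ together with the strict positivity of $s_q(s_r-s_q)I(u)+s_{2^*}(s_{2^*}-s_r)G(u)$) is exactly the paper's algebra. The gap is in the step you yourself flag as "the main technical point": the identity $\frac{d}{dt}\big|_{t=1}\lambda_c(u_t)=\mu\,\frac{d}{dt}\big|_{t=1}H(u_t)$. Your justification rests on the claim that $\|u_t-u\|/|t-1|$ stays bounded as $t\to1$, and this is false for a general $u\in E_R^{\alpha,p}(\mathbb{R}^N)$. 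The dilation family $t\mapsto u_t$ is only a strongly continuous group on this (reflexive) space; the difference quotient $\|u_t-u\|/|t-1|$ is bounded near $t=1$ precisely when $u$ lies in the domain of its generator, i.e.\ when $\sigma u+x\cdot\nabla u$ itself has finite energy — which is exactly the regularity/decay information one does not have a priori for a constrained critical point. The "explicit scaling formulas" only give the sizes of $\|\nabla u_t\|_2$ and the Coulomb term separately, not Lipschitz continuity of the curve; and the density argument cannot repair this, because the hypothesis $\lambda_c'(u)=\mu H'(u)$ holds only at the particular $u$, so you cannot transfer the computation to smooth compactly supported approximants. With only continuity of $t\mapsto u_t$, the relation $\Psi'(u)=0$ for $\Psi=\lambda_c-\mu H$ yields $\Psi(u_t)-\Psi(u)=\o(\|u_t-u\|)=\o(1)$, which is not enough to conclude $\frac{d}{dt}\big|_{t=1}\Psi(u_t)=0$.

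The paper closes this gap differently: it observes that $\lambda_c'(u)=\mu H'(u)$ means $u$ is a weak solution of an equation of the same structure as \eqref{103} (with coefficients $1-\mu F(u)(s_r-s_q)$ in front of $I'$, etc.), and then invokes the Pohozaev identity \eqref{POHOZAEV}--\eqref{pohozaev}, established as in \cite[Proposition 5.5]{MR3568051} via elliptic regularity and integration by parts with cutoffs, to obtain the tested identity
\begin{equation*}
	s_qI(u)-s_{2^*}G(u)-s_r\lambda_c(u)F(u)=\mu F(u)\big[s_q(s_r-s_q)I(u)+s_{2^*}(s_{2^*}-s_r)G(u)\big].
\end{equation*}
From there your computation is the paper's: the left-hand side equals $-H(u)+s_rc=0$ on $\N_c$, and the bracket is positive, so $\mu=0$. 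So the algebraic conclusion of your argument is fine, but to make the proof complete you must replace the chain-rule/difference-quotient justification by a Pohozaev identity for weak solutions of the Euler--Lagrange equation (or otherwise prove that the critical point has enough regularity and decay that $\sigma u+x\cdot\nabla u$ is admissible).
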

\begin{proof} If $\Lambda_c'(u)=0$, then there exists $\mu\in \mathbb{R}$ such that $\lambda_c'(u)=\mu H'(u)$. Therefore
	\begin{equation*}
		I'(u)-G'(u)-\lambda_c(u)F'(u)=\mu F(u)H'(u).
	\end{equation*}
	By the Pohozaev's identity \eqref{pohozaev} we conclude that
	\begin{equation*}
		s_qI(u)-s_{2^*}G(u)-s_r\left(\frac{I(u)-G(u)-c}{F(u)}\right)F(u)=\mu F(u)[s_q(s_r-s_q)I(u)+s_{2^*}(s_{2^*}-s_r)G(u)],
	\end{equation*}
 which implies
	\begin{equation*}
		0=-H(u)+s_rc=\mu F(u)[s_q(s_r-s_q)I(u)+s_{2^*}(s_{2^*}-s_r)G(u)],
	\end{equation*}
	and thus $\mu=0$.
\end{proof}

\begin{lemma}\label{lbound}  For each $[a,b]\subset (0,\infty)$, there exist constants $C_1,C_2>0$ such that
	\begin{enumerate}
		\item[(i)] If $r\in(q,2^*)$, then $C_1\le \|u\|\le C_2$ for all $u\in \mathcal{N}_c$, $c\in [a,b]$.
		\item[(ii)] If $r=q$, then $C_1\le \|u\|, G(u)$ and $G(u)\le C_2$ for all $u\in \mathcal{N}_c$, $c\in [a,b]$. Moreover, if $\alpha>1$, then the functional $\Lambda_c$ is coercive for all $c>0$.	 
	\end{enumerate}

\end{lemma}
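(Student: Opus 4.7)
The plan is to exploit the characterization $\mathcal{N}_c = \{u : H(u) = s_r c\}$ from Proposition \ref{NEHARI}(ii). Here $s_r - s_q = \sigma(r-q)$ and $s_{2^*} - s_r = \sigma(2^* - r)$, and the sign of the first coefficient distinguishes the two cases. In case (i), $r \in (q, 2^*)$, both coefficients are strictly positive, so from
\[
(s_r - s_q)\, I(u) + (s_{2^*} - s_r)\, G(u) = s_r c \in [s_r a,\, s_r b]
\]
one immediately reads the upper bound $I(u) \le s_r b/(s_r - s_q)$, giving $\|u\| \le C_2$. For the lower bound I would drop the Coulomb contribution in $H(u) \ge s_r a$ and insert the critical Sobolev bound $G(u) \le c_S\, I(u)^{2^*/2}$ to obtain $s_r a \le c_1 I(u) + c_2 I(u)^{2^*/2}$, forcing $I(u) \ge C_1 > 0$ and hence $\|u\| \ge C_1'$.

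In case (ii), $r = q$, the cancellation $s_r - s_q = 0$ collapses the Nehari identity to $G(u) = s_q c/(s_{2^*} - s_q)$ for every $u \in \mathcal{N}_c$, yielding the two-sided bound on $G(u)$ at once; Sobolev then produces $\|\nabla u\|_2^2 \ge S\, (2^* G(u))^{2/2^*}$, so $\|u\| \ge C_1$. The remaining coercivity assertion is the subtle part. My plan is to parametrize $\mathcal{N}_c$ via $\mathcal{M}$: each $u \in \mathcal{N}_c$ takes the form $u = v_{t_c(v)}$ with $v \in \mathcal{M}$, and solving $\varphi_{c,v}'(t)=0$ explicitly (using $r=q$) gives
\[
t_c(v)^{s_{2^*}} = \frac{s_q c}{(s_{2^*} - s_q)\, G(v)}, \qquad I(u) = t_c(v)^{s_q}, \qquad F(u) = t_c(v)^{s_q}\, F(v).
\]
A brief computation then yields
\[
\Lambda_c(u) = \frac{1 - (c + G(u))\, t_c(v)^{-s_q}}{F(v)},
\]
so that $\|u\| \to \infty$ along a sequence in $\mathcal{N}_c$ is equivalent to $t_c(v_n) \to \infty$, equivalently $G(v_n) \to 0$, while $\Lambda_c(u_n) \to 1/F(v_n)$ in the limit.

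Coercivity thus reduces to the implication: if $(v_n) \subset \mathcal{M}$ satisfies $G(v_n) \to 0$, then $F(v_n) \to 0$. Since $\{v_n\}$ is bounded in $E_R^{\alpha,p}(\mathbb{R}^N)$, up to a subsequence $v_n \wto v$, and the continuous embedding into $L^{2^*}(\mathbb{R}^N)$ together with $\|v_n\|_{2^*} \to 0$ forces $v=0$. I would then invoke the compact embedding $E_R^{\alpha,p}(\mathbb{R}^N) \hookrightarrow L^q(\mathbb{R}^N)$, available under $\alpha > 1$ from \cite{MR3568051}, to conclude $v_n \to 0$ in $L^q$, hence $F(v_n) \to 0$, and consequently $\Lambda_c(u_n) \to \infty$. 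The main obstacle is precisely this last step: because $q$ is the \emph{lower} endpoint of the embedding interval $[q, 2^*]$, a direct interpolation between a controlled subcritical norm and $L^{2^*}$ is not available, and one genuinely needs compactness at the endpoint, which is the role played by the hypothesis $\alpha > 1$ in the radial Coulomb--Sobolev framework.
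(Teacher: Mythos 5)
Your proposal is correct and, up to the very last step, follows the same route as the paper: the two-sided bounds in (i) and (ii) are read off from the constraint $H(u)=s_rc$ (the paper dismisses these as ``clear''; your Sobolev-based lower bound is a fine way to make that explicit), and your coercivity argument uses exactly the paper's reduction — write $u_n=(v_n)_{t_n}$ with $v_n\in\M$, note $\|u_n\|\to\infty$ forces $t_n\to\infty$, hence $G(v_n)\to 0$ by the Nehari identity, and $\Lambda_c(u_n)=\bigl(1-\mathrm{const}\cdot c\,t_n^{-s_q}\bigr)/F(v_n)$, so everything hinges on showing $G(v_n)\to 0\Rightarrow F(v_n)\to 0$ on $\M$. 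Where you diverge is in this final step, and here one of your side remarks is off: you claim that interpolation between a controlled subcritical norm and $L^{2^*}$ ``is not available'' and that compactness at the endpoint is genuinely needed, but this is precisely what the paper does \emph{without} compactness. Since $\alpha>1$ is equivalent to $q_0:=\tfrac{2[2p(N-1)+N-\alpha]}{3N+\alpha-4}<q$, the radial embedding of \cite[Theorem 4]{MR3568051} bounds $\|v_n\|_{q_1}$ for some $q_1\in(q_0,q)$, and interpolating $\|v_n\|_q\le\|v_n\|_{q_1}^{1-\theta}\|v_n\|_{2^*}^{\theta}$ with $\|v_n\|_{2^*}\to 0$ gives $F(v_n)\to 0$ directly, with no weak limits. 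Your alternative — weak convergence to $0$ (identified via $L^{2^*}$) plus a compact radial embedding into $L^q$ — also works, because for $\alpha>1$ the exponent $q$ lies strictly inside the radial embedding range $(q_0,2^*)$ where the compactness results of \cite{MR3568051} apply; but you must cite that radial compactness statement specifically, since the general (non-radial) embedding has $q$ as its endpoint and is certainly not compact there. In short: your route is valid and only marginally less elementary (qualitative compactness versus the paper's quantitative interpolation), but the justification you give for preferring it misstates what is and is not available.
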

\begin{proof} The inequalities in $(i)$ and $(ii)$ are clear since $H(u)=s_rc$ for all $u\in \mathcal{N}_c$, $c\in [a,b]$. Now we prove that $\Lambda_c$ is coercive when $r=q$. Let $\seq{u_n}$ be a sequence in $\N_c$ such that $\norm{u_n} \to \infty$. Set
	\[
	t_n = I(u_n)^{-1/s_q}, \quad \widetilde{u}_n = (u_n)_{t_n}, \quad \widetilde{t}_n = t_n^{-1} = I(u_n)^{1/s_q}
	\]
Then $\widetilde{u}_n \in \M$ and
	\begin{equation} \label{27}
		u_n = (\widetilde{u}_n)_{\widetilde{t}_n}
	\end{equation}
 Since $\M$ is a bounded manifold, $\seq{\widetilde{u}_n}$ is bounded and hence converges weakly to some $\widetilde{u} \in E_R^{\alpha,p}(\mathbb{R}^N)$. Since $\norm{u_n} \to \infty$, $I(u_n) \to \infty$ and hence $\widetilde{t}_n \to \infty$. By $H(u_n)=0$ we have 
	\begin{equation} \label{30}
s_{2^*}\widetilde{t}_n^{s_{2^*}}G(\widetilde{u}_n)=s_rc, \forall n\in \mathbb{N}.
	\end{equation}
	Since $\widetilde{t}_n \to \infty$ we conclude that $G(\widetilde{u}_n) \to 0$. Once $\alpha>1$ and 
	\begin{equation*}
		q>\frac{2[2p(N-1)+N-\alpha]}{3N+\alpha-4}:=q_0, 
	\end{equation*}
	we can choose $q_1\in (q_0,q)$ and $\theta$ satisfying $1/q=(1-\theta)/q_1+\theta/2^*$ such that
	\begin{equation*}
		\|\widetilde{u}_n\|_{q}\le \|\widetilde{u}_n\|_{q_1}^{1-\theta}\|\widetilde{u}_n\|_{2^*}^\theta, \forall n\in \mathbb{N}.
	\end{equation*}
Because $\|\widetilde{u}_n\|_{q_1}$ is bounded (see \cite[Theorem 4]{MR3568051}) we conclude that $\|\widetilde{u}_n\|_{q}\to 0$  and hence $F(\widetilde{u}_n)\to 0$ as $n\to \infty$. Since (because $H(u_n)=0$)
	\[
	\Lambda_{c}(u_n) = \frac{I(u_n)-G(u_n)-c}{F(u_n)}=\frac{I(u_n)-dc}{F(u_n)}=\frac{1-dc\widetilde{t}_n^{-s_q}}{F(\widetilde{u}_n)}, \forall n\in \mathbb{N},
	\]
 then $\Lambda_{c}(u_n) \to \infty$ and the proof is complete.
\end{proof}

Now we focus on showing that if $c\in (0,c^*)$, then  $\inf_{u\in \mathcal{N}_c}\Lambda_c(u)>0$. This will be used to prove the Palais--Smale condition. We note that imposing $\inf_{u\in \mathcal{N}_c}\Lambda_c(u)>0$ is not a restriction to our approach, since by the Pohozaev identity the equation \eqref{103} has no positive solutions when $\lambda\le 0$. The following  result is straightforward.

\begin{lemma}\label{lt0} Let
	\begin{equation*}
		z_u(t)=\frac{S}{2}\left(\int |u|^{2^*}\right)^{\frac{2}{2^*}}t^{s_q}-\frac{1}{2^*}\int |u|^{2^*}t^{s_{2^*}}, t>0.
	\end{equation*}
	Then, $z_u$ has a unique global maximizer at
	\begin{equation*}
		t_0(u)=\left[\frac{2^*}{2}\frac{s_q}{s_{2^*}}\frac{S}{\left(\int |u|^{2^*}\right)^{\frac{2}{N}}} \right]^{\frac{1}{s_{2^*}-s_q}}
	\end{equation*} 
	and 
	\begin{equation*}
		c^*:=z_u(t_0)=\frac{1}{2}\frac{s_{2^*}-s_q}{s_{2^*}}\left(\frac{2^*}{2}\frac{s_q}{s_{2^*}}\right)^{\frac{s_q}{s_{2^*}-s_q}}S^{\frac{s_{2^*}}{s_{2^*}-s_q}}.
	\end{equation*}
\end{lemma}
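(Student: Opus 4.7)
The plan is to treat Lemma \ref{lt0} as a one-variable calculus problem for the function
\[
z_u(t) = A\, t^{s_q} - B\, t^{s_{2^*}}, \quad A = \tfrac{S}{2}\,\|u\|_{2^*}^{2}, \quad B = \tfrac{1}{2^*}\,\|u\|_{2^*}^{2^*},
\]
where both $A$ and $B$ are strictly positive since $u\neq 0$. Because $0 < s_q < s_{2^*}$, the function $z_u$ tends to $0^+$ as $t\to 0^+$ and to $-\infty$ as $t\to\infty$, and its derivative
\[
z_u'(t) = s_q A\, t^{s_q-1} - s_{2^*} B\, t^{s_{2^*}-1}
\]
has a unique positive zero. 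Solving $z_u'(t_0)=0$ for $t_0$ gives $t_0(u) = \left(\tfrac{s_q A}{s_{2^*} B}\right)^{1/(s_{2^*}-s_q)}$, and substituting the formulas for $A,B$ (using $\|u\|_{2^*}^{2-2^*} = \|u\|_{2^*}^{-2\cdot 2^*/N}$) produces the stated closed form for $t_0(u)$. A sign check of $z_u''$ confirms that $t_0$ is indeed the global maximizer.

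To evaluate $z_u(t_0)$, I would exploit the critical-point identity $B\,t_0^{s_{2^*}} = \tfrac{s_q}{s_{2^*}}\,A\,t_0^{s_q}$ to reduce the maximum value to the clean expression
\[
z_u(t_0) = A\, t_0^{s_q}\,\frac{s_{2^*} - s_q}{s_{2^*}}.
\]
Writing $\beta = 1/(s_{2^*}-s_q)$ and observing $1+\beta s_q = \beta s_{2^*}$, this becomes
\[
z_u(t_0) = \frac{s_{2^*}-s_q}{s_{2^*}}\,\bigl(A^{s_{2^*}} B^{-s_q}\bigr)^{\beta}\,\left(\tfrac{s_q}{s_{2^*}}\right)^{\beta s_q}.
\]
The only thing left is to verify that the $u$-dependence in $A^{s_{2^*}}B^{-s_q}$ cancels and that the resulting constant matches the claimed formula.

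The key algebraic input, which I would record as a short preliminary observation, is the identity $\sigma(q-2) = 2$. This follows at once from $\sigma = (2+\alpha)/(2(p-1))$ and $q-2 = 4(p-1)/(2+\alpha)$, and it is equivalent to $s_{2^*}/s_q = 2^*/2$. With this identity in hand, the power of $\|u\|_{2^*}$ in $A^{s_{2^*}}B^{-s_q}$ is $2s_{2^*} - 2^*s_q = 0$, so $A^{s_{2^*}}B^{-s_q} = (S/2)^{s_{2^*}}(2^*)^{s_q}$. Plugging back and using $(2^*)^{\beta s_q}(s_q/s_{2^*})^{\beta s_q} = 2^{\beta s_q}$, the factor of $2$ simplifies via $\beta(s_{2^*}-s_q)=1$, leaving
\[
z_u(t_0) = \tfrac{1}{2}\,\tfrac{s_{2^*}-s_q}{s_{2^*}}\,S^{s_{2^*}/(s_{2^*}-s_q)} = c^*,
\]
where the displayed form in the statement is recovered by inserting the trivial factor $\bigl(\tfrac{2^*}{2}\tfrac{s_q}{s_{2^*}}\bigr)^{s_q/(s_{2^*}-s_q)} = 1$.

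This is essentially a routine computation, so there is no substantive obstacle. The one place where attention is needed is the algebraic identity $\sigma(q-2)=2$: without it, the formula for $c^*$ would depend on $u$ and the lemma would be false. I would therefore isolate this identity at the start of the proof and then proceed with the direct substitution sketched above.
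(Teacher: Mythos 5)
Your proof is correct, and it is exactly the elementary one-variable computation the paper has in mind (the paper omits the proof, calling the lemma ``straightforward''). In particular your verification of the identity $\sigma(q-2)=2$, equivalently $2\,s_{2^*}=2^*s_q$, is the right key point: it is what makes $z_u(t_0)$ independent of $u$ and also shows the factor $\bigl(\tfrac{2^*}{2}\tfrac{s_q}{s_{2^*}}\bigr)^{s_q/(s_{2^*}-s_q)}$ in the stated formula equal to $1$, so your simplified value of $c^*$ agrees with the displayed one.
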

\begin{lemma}\label{ineinfipositive} There holds
		\begin{equation*}
		\Lambda_c(u)\ge t_0(u)^{-s_r}\frac{c^*-c}{F(u)} ,\forall u\in \mathcal{N}_c.
	\end{equation*}
\end{lemma}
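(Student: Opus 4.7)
The plan is to use that, for $u\in\N_c$, the fiber map $\varphi_{c,u}(t)=\lambda_c(u_t)$ achieves its global maximum at $t=1$ with value $\Lambda_c(u)$ (by Proposition \ref{NEHARI}$(ii)$). Hence for every $t>0$ one has $\Lambda_c(u)\geq \varphi_{c,u}(t)$, and I would specifically choose the convenient test value $t=t_0(u)$ provided by Lemma \ref{lt0}, so that
$$\Lambda_c(u)\;=\;\varphi_{c,u}(1)\;\geq\;\varphi_{c,u}(t_0(u))\;=\;\frac{t_0(u)^{s_q}\,I(u)-t_0(u)^{s_{2^*}}\,G(u)-c}{t_0(u)^{s_r}\,F(u)}.$$

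Next, I would drop the nonnegative Coulomb term in $I$ and apply the Sobolev inequality to obtain
$$I(u)\;\geq\;\frac{1}{2}\int|\nabla u|^2\;\geq\;\frac{S}{2}\left(\int|u|^{2^*}\right)^{2/2^*}.$$
Plugging this lower bound into the numerator of the previous display exactly reproduces $z_u(t_0(u))-c$, which by the definition of $t_0(u)$ and Lemma \ref{lt0} equals $c^*-c$. Dividing by $t_0(u)^{s_r}F(u)$ then yields the claimed inequality. One only has to observe in passing that $G(u)>0$ on $\N_c$ (as $u\neq 0$), so $t_0(u)$ is well-defined.

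There is no real obstacle here; the only step requiring any insight is recognizing that the natural test value at which to evaluate $\varphi_{c,u}$ is precisely the maximizer $t_0(u)$ of the scalar function $z_u$ from Lemma \ref{lt0}. Any other choice of $t$ would fail to turn the Sobolev estimate into the sharp constant $c^*$, and would only give a weaker lower bound.
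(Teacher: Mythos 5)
Your proof is correct and follows essentially the same route as the paper: use that $\Lambda_c(u)=\varphi_{c,u}(1)\ge\varphi_{c,u}(t)$ for $u\in\N_c$, bound $I(u)$ from below via the Sobolev inequality (discarding the nonnegative Coulomb term), and evaluate at $t=t_0(u)$ so that Lemma \ref{lt0} gives $z_u(t_0(u))=c^*$. The only difference is cosmetic — you substitute $t=t_0(u)$ at the outset while the paper keeps $t$ general until the last line.
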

\begin{proof} By Proposition \ref{NEHARI} item $(ii)$ we have that
	\begin{equation*}
		\Lambda_c(u)\ge\varphi_{c,u}(t)= \lambda_c(u_{t}),\forall u\in \mathcal{N}_c, t>0,
	\end{equation*}
	which implies that
	\begin{eqnarray*}
		\Lambda_c(u)&\ge& t^{-s_r}\frac{\left[\frac{S}{2}\left(\int |u|^{2^*}\right)^{\frac{2}{2^*}}+\frac{1}{2p}\int |I_{\alpha/2}\star |u|^p|^2\right]t^{s_q}-\frac{t^{s_{2^*}}}{2^*}\int |u|^{2^*}-c}{F(u)} \\
		&=& t^{-s_r} \frac{z_u(t)+\frac{t^{s_q}}{2p}\int |I_{\alpha/2}\star |u|^p|^2-c}{F(u)} ,\forall u\in \mathcal{N}_c, t>0,
	\end{eqnarray*}
	and hence, by Lemma \ref{lt0}, 
		\begin{equation*}
		\Lambda_c(u)\ge t_0(u)^{-s_r}\frac{c^*-c}{F(u)}, \forall u\in \mathcal{N}_c.
	\end{equation*}
\end{proof}
\begin{lemma}\label{lbb} If $c\in (0,c^*)$, then  $\inf_{u\in \mathcal{N}_c}\Lambda_c(u)>0$.
	
\end{lemma}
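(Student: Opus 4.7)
The plan is to combine Lemma \ref{ineinfipositive} with the manifold bounds of Lemma \ref{lbound} and the Nehari identity $H(u)=s_rc$ from Proposition \ref{NEHARI}. Since $c<c^*$, the constant $c^*-c$ is strictly positive, so by Lemma \ref{ineinfipositive} it is enough to bound $t_0(u)^{-s_r}/F(u)$ uniformly from below on $\mathcal{N}_c$. Substituting the explicit formula for $t_0(u)$ from Lemma \ref{lt0} gives $t_0(u)^{-s_r}=C_0\bigl(\int|u|^{2^*}\bigr)^{\beta}$ with $\beta=\frac{2s_r}{N(s_{2^*}-s_q)}>0$, so the task reduces to controlling $\int|u|^{2^*}$ and $F(u)$ simultaneously along $\mathcal{N}_c$.

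I would first dispatch the case $r=q$: here $s_r=s_q$, and the identity $H(u)=s_rc$ collapses to $(s_{2^*}-s_q)G(u)=s_qc$, pinning $\int|u|^{2^*}$ at a fixed positive constant on $\mathcal{N}_c$. Consequently $t_0(u)^{-s_r}$ is constant, and Lemma \ref{lbound}(ii) together with the embedding $E_R^{\alpha,p}(\mathbb{R}^N)\hookrightarrow L^q(\mathbb{R}^N)$ bounds $F(u)$ from above; the strict positivity of $\inf_{\mathcal{N}_c}\Lambda_c$ follows immediately.

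For $r\in(q,2^*)$ I would argue by contradiction. Suppose $(u_n)\subset\mathcal{N}_c$ satisfies $\Lambda_c(u_n)\to 0$. By Lemma \ref{lbound}(i) the sequence is bounded in $E_R^{\alpha,p}(\mathbb{R}^N)$, so $I(u_n)$, $G(u_n)$, $F(u_n)$ are bounded. Lemma \ref{ineinfipositive} then forces $t_0(u_n)^{-s_r}\to 0$, hence $\int|u_n|^{2^*}\to 0$ and $G(u_n)\to 0$. The identity $H(u_n)=s_rc$ gives $I(u_n)\to I_\infty:=s_rc/(s_r-s_q)$, and a direct computation yields $I_\infty-c=cs_q/(s_r-s_q)>0$. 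Meanwhile, Hölder interpolation $\|u_n\|_r\le\|u_n\|_q^{1-\theta}\|u_n\|_{2^*}^{\theta}$ with $\theta\in(0,1)$ determined by $1/r=(1-\theta)/q+\theta/2^*$, together with boundedness of $\|u_n\|_q$ from the embedding into $L^q$, forces $F(u_n)\to 0$. Therefore $\Lambda_c(u_n)=(I(u_n)-G(u_n)-c)/F(u_n)\to+\infty$, contradicting $\Lambda_c(u_n)\to 0$.

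The step I expect to be most delicate is verifying that the contradiction really closes, i.e.\ that $I_\infty-c>0$ and that the Hölder interpolation can be exploited with only an upper bound on $\|u_n\|$ available; these rest respectively on the strict inequality $s_r>s_q$ (which is exactly where $r>q$ matters) and on the embedding $E_R^{\alpha,p}(\mathbb{R}^N)\hookrightarrow L^\ell(\mathbb{R}^N)$ for $\ell\in[q,2^*]$ recorded after the definition of $q$. Once these ingredients are in place the remainder of the argument is mechanical.
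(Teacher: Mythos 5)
Your argument for the case $r\in(q,2^*)$ is correct and close in spirit to the paper's: both rest on Lemma \ref{ineinfipositive}, the bounds of Lemma \ref{lbound}(i), and the constraint $H(u)=s_rc$; you additionally push through to $F(u_n)\to 0$ via interpolation and get $\Lambda_c(u_n)\to+\infty$, whereas the paper stops at the dichotomy ``$G(u_n)\to 0$ versus $G(u_n)$ bounded away from $0$'' using the identity \eqref{gzero}. Either route closes that case.

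The gap is in your treatment of $r=q$. You claim that Lemma \ref{lbound}(ii) together with the embedding $E_R^{\alpha,p}(\mathbb{R}^N)\hookrightarrow L^q(\mathbb{R}^N)$ bounds $F$ from above on all of $\mathcal{N}_c$, but Lemma \ref{lbound}(ii) gives only a \emph{lower} bound on $\|u\|$ (plus two-sided bounds on $G$); it provides no upper bound on the norm, and indeed for $r=q$ the constraint defining $\mathcal{N}_c$ reduces to $(s_{2^*}-s_q)G(u)=s_qc$, which pins only $G$. One can keep $\int|u|^{2^*}$ fixed while making $\int|u|^q$ (hence $F(u)$ and $\|u\|$) arbitrarily large, e.g.\ by adding far-away radial bumps of small amplitude and large support, so $F$ is \emph{not} bounded on $\mathcal{N}_c$ and your pointwise lower bound $\Lambda_c(u)\ge t_0^{-s_r}(c^*-c)/F(u)$ does not by itself yield a positive infimum. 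The repair is the one the paper uses: work along a minimizing sequence $(u_n)$, note that $\Lambda_c(u_n)$ converges (to the infimum, which is finite and $\ge 0$ by Lemma \ref{ineinfipositive}), and invoke the coercivity of $\Lambda_c$ from Lemma \ref{lbound}(ii) — available precisely because $(H_1)$ imposes $\alpha>1$ when $r=q$ — to conclude that $\|u_n\|$, and hence $F(u_n)$, is bounded; since $G$ is pinned, $t_0(u_n)^{-s_r}$ is a fixed positive constant, and Lemma \ref{ineinfipositive} then gives $\Lambda_c(u_n)$ bounded away from $0$, as required. Without routing through coercivity (or some substitute controlling $F$ along the minimizing sequence), the $r=q$ case does not close.
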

\begin{proof} We already know by Lemma \ref{ineinfipositive} that $\inf_{u\in \mathcal{N}_c}\Lambda_c(u)\ge0$. Let $u_n\in \mathcal{N}_c$ be a minimizing sequence to $\inf_{u\in \mathcal{N}_c}\Lambda_c(u)$.  First assume $r>q$. Since $H(u_n)=0$ for all $u\in \mathcal{N}_c$, we have that
\begin{equation}\label{gzero}
	\Lambda_c(u_n)=\frac{s_rc-(s_{2^*}-s_q)G(u_n)}{(s_r-s_q)F(u_n)}, \forall n\in \mathbb{N}.
\end{equation}

By Lemma \ref{lbound} we have that $F(u_n)$ is bounded. Moreover, if $G(u_n)\to 0$, then \eqref{gzero} implies that $\Lambda_c(u_n)$ is away from zero, while if $G(u_n)$ is away from zero, then Lemma \ref{ineinfipositive} implies that $\Lambda_c(u_n)$ is away from zero. In both cases we conclude  $\inf_{u\in \mathcal{N}_c}\Lambda_c(u)> 0$.

Now suppose $r=q$. By Lemma \ref{lbound} we know that $F(u_n)$ is bounded and $G(u_n)$ is away from zero, therefore by Lemma \eqref{ineinfipositive} we conclude that $\inf_{u\in \mathcal{N}_c}\Lambda_c(u)>0$ if $c\in (0,c^*)$.

\end{proof}
We shall see in the next section that $\inf_{u\in \mathcal{N}_c}\Lambda_c(u)<0$ when $c>c^*$. Now we show that $\Lambda_c$ satisfies the   \PS{} condition if $c\in (0,c^*)$. To this end let us first note an auxiliary result.

\begin{corollary}\label{cinfinity} Suppose $0<c<c^*$ and $u_n\in \mathcal{N}_c$. If  $F(u_n)\to 0$, then $\Lambda_c(u_n)\to \infty$.
	
\end{corollary}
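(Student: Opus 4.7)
The plan is to combine the lower bound from Lemma \ref{ineinfipositive} with the scaled Nehari identity $H(u_n) = s_r c$. By Lemma \ref{lbound}, $G(u_n)$ is bounded, so after passing to a subsequence I may assume $G(u_n) \to g \ge 0$, and then analyze the cases $g > 0$ and $g = 0$ separately.

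In the first case, Lemma \ref{lt0} shows that $t_0(u_n)$ converges to a finite positive limit, so $t_0(u_n)^{-s_r}$ stays above some constant $\delta > 0$. Lemma \ref{ineinfipositive} then yields
\[
\Lambda_c(u_n) \ge t_0(u_n)^{-s_r}\,\frac{c^* - c}{F(u_n)} \ge \delta\,\frac{c^* - c}{F(u_n)} \longrightarrow +\infty,
\]
since $c^* - c > 0$ and $F(u_n) \to 0^+$.

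In the second case, I first observe that $g = 0$ cannot occur when $r = q$: there $s_r = s_q$ and the identity $H(u) = s_r c$ collapses to $(s_{2^*} - s_q)\, G(u) = s_q c$, making $G$ a positive constant on $\mathcal{N}_c$. So $r > q$, hence $s_r - s_q > 0$, and solving the Nehari identity for $I(u_n)$ gives $I(u_n) = [s_r c - (s_{2^*} - s_r) G(u_n)]/(s_r - s_q) \to s_r c/(s_r - s_q)$. Therefore the numerator of $\Lambda_c(u_n) = [I(u_n) - G(u_n) - c]/F(u_n)$ tends to $s_r c/(s_r - s_q) - c = s_q c/(s_r - s_q) > 0$, while $F(u_n) \to 0^+$, which again forces $\Lambda_c(u_n) \to +\infty$.

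The main obstacle is precisely this second case: Lemma \ref{ineinfipositive} degenerates when $G(u_n) \to 0$ because then $t_0(u_n) \to \infty$ and $t_0(u_n)^{-s_r} \to 0$, so the inequality becomes useless on its own. The remedy is to read the asymptotic of the numerator of $\Lambda_c$ directly off the scaled Nehari constraint, which supplies the required strict positivity $s_q c/(s_r - s_q) > 0$ and closes the argument.
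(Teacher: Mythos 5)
Your argument is correct and follows essentially the same route as the paper: when $G(u_n)$ stays away from zero you use Lemma \ref{ineinfipositive} with $t_0(u_n)^{-s_r}$ bounded below, and when $G(u_n)\to 0$ (which, as you note, forces $r>q$ since $G$ is constant on $\mathcal{N}_c$ for $r=q$) you read the positivity of the numerator of $\Lambda_c$ off the constraint $H(u_n)=s_rc$, exactly as the paper does via \eqref{gzero}. The only cosmetic difference is your subsequence formulation of the dichotomy, which is harmless, and your limit $s_qc/(s_r-s_q)$ for the numerator is the correct value (the paper's \eqref{gzero} has $s_rc$ there, but positivity holds either way).
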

\begin{proof} If $r>q$ it follows from \eqref{gzero} and Lemma \ref{ineinfipositive} as in the proof of Lemma \ref{lbb}. If $r=q$ the proof is straightforward from Lemmas \ref{lbound} and \ref{ineinfipositive}.
\end{proof}
\begin{lemma} If  $0<c<c^*$, then $\Lambda_c$ satisfies the {\em \PS{}} condition.
\end{lemma}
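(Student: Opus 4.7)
I would take a \PS{c} sequence $(u_n) \subset \mathcal{N}_c$, i.e., $\Lambda_c(u_n) \to \bar\lambda$ and $\Lambda_c'(u_n) \to 0$ in $T^*\mathcal{N}_c$. First, $(u_n)$ is bounded in $E_R^{\alpha,p}(\mathbb{R}^N)$: this is immediate from Lemma \ref{lbound}\,(i) when $r \in (q,2^*)$, and from the coercivity in Lemma \ref{lbound}\,(ii) combined with the boundedness of $\Lambda_c(u_n)$ when $r = q$; moreover Corollary \ref{cinfinity} guarantees $F(u_n) \ge \delta > 0$. To pass to an unconstrained statement, I write the Lagrange multiplier decomposition $\lambda_c'(u_n) = \mu_n H'(u_n) + \varepsilon_n$ with $\|\varepsilon_n\|_* \to 0$, and test against the scaling direction $\xi_n := \frac{d}{dt}\big|_{t=1}(u_n)_t$. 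By the definition of $\mathcal{N}_c$ one has $\langle \lambda_c'(u_n), \xi_n\rangle = \varphi'_{c,u_n}(1) = 0$, while \eqref{scaling} gives $\langle H'(u_n), \xi_n\rangle = s_q(s_r-s_q)I(u_n) + s_{2^*}(s_{2^*}-s_r)G(u_n)$, which Lemma \ref{lbound} bounds below by a positive constant. Hence $\mu_n \to 0$, so $\lambda_c'(u_n) \to 0$, and via \eqref{9} together with $F(u_n) \ge \delta$ one obtains $\Phi_{\lambda_n}'(u_n) \to 0$ with $\lambda_n := \Lambda_c(u_n) \to \bar\lambda$; combined with $\Phi_{\lambda_n}(u_n) = c$ this exhibits $(u_n)$ as a \PS{c} sequence for $\Phi_{\bar\lambda}$.

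Extract $u_n \wto u$ in $E_R^{\alpha,p}(\mathbb{R}^N)$. The compact embeddings $E_R^{\alpha,p}(\mathbb{R}^N) \hookrightarrow L^\ell(\mathbb{R}^N)$ for $q \le \ell < 2^*$ from \cite[Theorem 4]{MR3568051} and the corresponding compactness of the subcritical Coulomb term let me pass to the weak limit in $\Phi_{\bar\lambda}'(u_n) \to 0$ and conclude $\Phi_{\bar\lambda}'(u) = 0$. Setting $v_n := u_n - u$, Brezis--Lieb splits $I(u_n) = I(u) + I(v_n) + o(1)$ and $G(u_n) = G(u) + G(v_n) + o(1)$, while $F(v_n) \to 0$. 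Testing $\Phi_{\bar\lambda}'(u_n) \to 0$ against $\xi_n$ and subtracting the corresponding identity for $u$ (which follows from \eqref{pohozaev}) yields
\[
s_q\, I(v_n) - s_{2^*}\, G(v_n) = o(1).
\]

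Assume for contradiction $\|v_n\|_{2^*} \not\to 0$. The Sobolev inequality $S\|v_n\|_{2^*}^2 \le \|\nabla v_n\|_2^2 \le 2 I(v_n)$ combined with the displayed identity and the ratios $s_{2^*}/s_q = N/(N-2)$, $(s_{2^*}-s_q)/s_{2^*} = 2/N$ force $\|v_n\|_{2^*}^{2^*} \ge S^{N/2}$ in the limit. The energy splitting $\Phi_{\bar\lambda}(u_n) = \Phi_{\bar\lambda}(u) + I(v_n) - G(v_n) + o(1)$ together with $I(v_n) - G(v_n) = \frac{s_{2^*}-s_q}{s_q} G(v_n) + o(1) = \frac{1}{N}\|v_n\|_{2^*}^{2^*} + o(1)$ then gives
\[
c = \Phi_{\bar\lambda}(u) + \frac{s_{2^*}-s_q}{s_q}\, G(v_n) + o(1) \ge \Phi_{\bar\lambda}(u) + \frac{1}{N}\, S^{N/2} = \Phi_{\bar\lambda}(u) + c^*,
\]
contradicting $c < c^*$, because $\Phi_{\bar\lambda}(u) \ge 0$ (either $u=0$ trivially, or $u$ is a nontrivial critical point and \eqref{pohozaev} gives $\Phi_{\bar\lambda}(u) = H(u)/s_r > 0$). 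Hence $\|v_n\|_{2^*} \to 0$, and the displayed identity then forces $I(v_n) \to 0$, so $u_n \to u$ in $E_R^{\alpha,p}(\mathbb{R}^N)$. The main obstacle is this last threshold step: one must squeeze from the loose \PS{} information a precise Pohozaev-type identity on the defect $v_n$ and match the Sobolev-forced concentration mass exactly to $c^* = \tfrac{1}{N}S^{N/2}$ hidden in Lemma \ref{lt0}; all earlier steps are routine Nehari/Lagrange-multiplier manipulations.
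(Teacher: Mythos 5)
Your strategy is genuinely different from the paper's: you convert the constrained \PS{} sequence into a free \PS{} sequence for $\Phi_{\bar\lambda}$ and run a classical Brezis--Nirenberg splitting, using the (correct, and nicely observed) identity $c^*=S^{N/2}/N$ from Lemma \ref{lt0}. The paper never leaves the constrained level: it only proves that the multipliers $\eta_n$ are \emph{bounded}, that $1-(s_r-s_q)\eta_nF(u_n)$ stays away from zero, and then extracts $I'(u_n)(u_n-u)\to 0$ from \eqref{100} and concludes by uniform convexity; the restriction $c<c^*$ enters only through Corollary \ref{cinfinity}. However, as written your argument has a genuine gap at its two pivotal steps: both the claim $\mu_n\to 0$ and the defect identity $s_qI(v_n)-s_{2^*}G(v_n)=o(1)$ are obtained by testing against the scaling direction $\xi_n=\frac{d}{dt}\big|_{t=1}(u_n)_t=\sigma u_n+x\cdot\nabla u_n$. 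For a general element of $E_R^{\alpha,p}(\mathbb{R}^N)$ this vector need not belong to the space (there is no control on $x\cdot\nabla u_n$), the map $t\mapsto (u_n)_t$ is not norm-differentiable, and even formally $\norm{\xi_n}$ is not controlled by $\norm{u_n}$, so neither $\langle\varepsilon_n,\xi_n\rangle\to 0$ nor $\langle\Phi_{\bar\lambda}'(u_n),\xi_n\rangle\to 0$ is justified. Pohozaev-type identities such as \eqref{pohozaev} are available for actual solutions after a regularity argument (cf. \cite[Proposition 5.5]{MR3568051}), but not for \PS{} sequences; this is precisely why the paper's Lemma \ref{Lemma 6} is stated for critical points and why, in the \PS{} proof, the paper settles for boundedness of $\eta_n$ rather than $\eta_n\to 0$.

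Part of this can be repaired: the defect identity does follow legitimately from the exact constraint $H(u_n)=s_rc$, the energy identity $\Phi_{\lambda_n}(u_n)=c$, the splittings you invoke, and Pohozaev/Lemma \ref{Lemma 8} applied to the limit $u$ (a genuine solution), with no need to test against $\xi_n$. But the step $\mu_n\to 0$ --- which is what produces the free \PS{} sequence, hence $\Phi_{\bar\lambda}'(u)=0$, the energy splitting at level $c$, and everything after --- has no comparably easy fix: testing against admissible directions such as $u_n$ only yields boundedness of the multiplier, which is exactly where the paper stops and then argues differently. Two further ingredients you assert without proof are the Brezis--Lieb-type splitting $I(u_n)=I(u)+I(v_n)+o(1)$ for the nonlocal Coulomb term $\int|I_{\alpha/2}\star|u_n|^p|^2$ (this is not the standard Brezis--Lieb lemma and requires a separate nonlocal argument), and the passage to the limit $\Phi_{\bar\lambda}'(u)=0$ including the critical and Coulomb terms. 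So the blueprint is viable and in some respects more informative than the paper's (it locates the compactness threshold explicitly), but as it stands the multiplier step and the Pohozaev-for-\PS{}-sequences step are unjustified, whereas the paper's proof avoids both.
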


\begin{proof} Suppose that $u_n\in \mathcal{N}_c$ satisfies $\Lambda'_c(u_n)\to 0$ and $\Lambda_c(u_n)$ is bounded. We can find a sequence $\eta_n\in \mathbb{R}$ such that 
	\begin{equation*}
		\lambda_c'(u_n)=\eta_n H'(u_n)+o(1), \forall n\in \mathbb{N}.
	\end{equation*}
	By Lemma \ref{lbound} we have that $F(u_n)$ and $G(u_n)$ are bounded and hence
	\begin{equation}\label{99}
		I'(u_n)-G'(u_n)-\lambda_c(u_n)F'(u_n)=\eta_n F(u_n)[(s_r-s_q)I'(u_n)+(s_{2^*}-s_r)G'(u_n)]+o(1), \forall n\in \mathbb{N},
	\end{equation}
We claim that $\eta_n$ is bounded. On the contrary, by \eqref{99} we conclude that $F(u_n)[(s_r-s_q)I'(u_n)+(s_{2^*}-s_r)G'(u_n)]\to 0$ as $n\to \infty$. Since $u_n$ is bounded it follows that $F(u_n)[(s_r-s_q)I'(u_n)+(s_{2^*}-s_r)G'(u_n)]u_n\to 0$ as $n\to \infty$ and hence ($I(u_n),G(u_n)\to 0$ if $r>q$, $G(u_n)\to 0$ if $r=q$)  or $F(u_n)\to 0$. The first case is impossible by Lemma \ref{lbound} so $F(u_n)\to 0$ which implies, by Corollary \ref{cinfinity}, that $\Lambda_c(u_n)\to \infty$, a contradiction, therefore $\eta_n$ is bounded. Note that

	\begin{equation}\label{100}
		[1-(s_r-s_q)\eta_nF(u_n)]I'(u_n)=G'(u_n)+\lambda_c(u_n)F'(u_n)+(s_{2^*}-s_r)\eta_n F(u_n)G'(u_n)+o(1), \forall n\in \mathbb{N}.
	\end{equation}
	From now on we assume that $u_n \rightharpoonup u$, $\eta_n\to \eta$ and $\lambda_c(u_n)\to \lambda$ as $n\to \infty$. It follows that $G'(u_n)\to G'(u)$, $F'(u_n)\to F'(u)$. We claim that $1-(s_r-s_q)\eta_nF(u_n)$ is away from zero. Indeed, if $r=q$ this is trivial, so assume that $r>q$, then  $1-(s_r-s_q)\eta_nF(u_n)\to 1-(s_r-s_q)\eta F(u)=0$ as $n\to \infty$, then $\eta>0$. However \eqref{100} implies that
	\begin{equation*}
		G'(u)+\lambda F'(u)+(s_{2^*}-s_r)\eta F(u)G'(u)=0,
	\end{equation*} 
	and hence $\eta<0$, a contradiction. Therefore $1-(s_r-s_q)\eta_nF(u_n)$ is away from zero and by \eqref{100} we conclude that $I'(u_n)(u_n-u)\to 0$. Now observe that
	
	\begin{eqnarray*}
		o(1)=(I'(u_n)-I'(u))(u_n-u)&=&\int|\nabla u_n-\nabla u|^2+\int |I_{\alpha/2}\star |u_n|^p|^2 \\
		&-& \int (I_{\alpha/2}\star |u_n|^p)(I_{\alpha/2}\star |u_n|^{p-2}u_nu) \\  
			&-& \int (I_{\alpha/2}\star |u|^p)(I_{\alpha/2}\star |u|^{p-2}uu_n) \\  
				&+& \int |I_{\alpha/2}\star |u|^p|^2 \\ 
				&\ge & \int|\nabla u_n-\nabla u|^2+\frac{1}{p}\int (I_{\alpha/2}\star |u_n|^p -I_{\alpha/2}\star |u|^p)^2,
	\end{eqnarray*}
	where the equality comes from the semigroup property of $I_\alpha$ and the inequality comes from Young's inequality. Since $ E_R^{\alpha,p}(\mathbb{R}^N)$ is uniformly convex we conclude that $u_n\to u$ and the proof is complete.
\end{proof}
Let $\F_c$ denote the class of symmetric subsets of $\mathcal{N}_c$ and let $i(M)$ denote the $\Z_2$-cohomological index of $M \in \F_c$ (see Fadell and Rabinowitz \cite{MR0478189}). For $k \ge 1$, let $\F_{c,k} = \bgset{M \in \F_c : i(M) \ge k}$ and set
\begin{equation} \label{50}
	\lambda_{c,k} := \inf_{M \in \F_{c,k}}\, \sup_{u \in M}\, \Lambda_c(u).
\end{equation}
We have the following proposition (see Perera et al.\! \cite[Proposition 3.52]{MR2640827}).

\begin{proposition} \label{Proposition 3} If $c\in (0,c^*)$, then 
	$\lambda_{c,k} \nearrow \infty$ is a sequence of critical values of $\Lambda_c$.
\end{proposition}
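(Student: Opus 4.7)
The plan is to apply the general minimax principle of Perera--Agarwal--O'Regan \cite[Proposition 3.52]{MR2640827}, which asserts that on a complete symmetric $C^1$-Finsler manifold $X$ with $i(X)=\infty$, an even $C^1$ functional $\Lambda$ that is bounded below and satisfies \PS{} admits a nondecreasing sequence of critical values given by the index-based minimax levels
\[
\inf\,\bgset{\sup_M \Lambda : M \subset X,\ M=-M,\ i(M)\ge k},
\]
and that this sequence tends to $+\infty$. Thus the proposition will follow once the four standing hypotheses of this theorem and the condition $i(\N_c)=\infty$ have been verified.

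The four hypotheses are already in place from earlier results in this section: $\N_c$ is a complete symmetric $C^1$-Finsler manifold by Proposition \ref{NEHARI}; $\Lambda_c$ is even (since $I$, $F$, and $G$ are even) and $C^1$ (since $F>0$ on $\N_c$ and $H'\ne 0$ on $\N_c$); $\inf_{\N_c}\Lambda_c>0$ by Lemma \ref{lbb}, which is exactly where the hypothesis $c\in(0,c^*)$ is used; and $\Lambda_c$ satisfies \PS{} by the lemma immediately preceding this proposition.

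It remains to verify $i(\N_c)=\infty$. Given $k\ge 1$, pick any $k$-dimensional subspace $V_k\subset E_R^{\alpha,p}(\R^N)$ and any symmetric sphere $S_k\subset V_k\setminus\set{0}$, for which $i(S_k)=k$. The map $\Psi(u)=u_{t_c(u)}$, with $t_c$ from Proposition \ref{NEHARI}, sends $E_R^{\alpha,p}(\R^N)\setminus\set{0}$ continuously onto $\N_c$; it is odd since $(-u)_t=-u_t$ and $t_c(-u)=t_c(u)$, the latter being a consequence of $\lambda_c$ being even (hence $\varphi_{c,-u}=\varphi_{c,u}$) together with uniqueness of the maximizer, while continuity follows from the implicit function theorem applied to $\varphi_{c,u}'(t)=0$. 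Monotonicity of the $\Z_2$-cohomological index under odd continuous maps therefore yields $i(\Psi(S_k))\ge i(S_k)=k$, so $\F_{c,k}\ne\emptyset$ and $i(\N_c)=\infty$.

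The main abstract point, and the one I expect to require the most care, is that the minimax principle of \cite{MR2640827} really applies to the present uniformly convex Banach setting rather than only to a Hilbertian one; this is standard because its proof relies only on the equivariant deformation lemma, which holds on any complete $C^1$-Finsler manifold. The divergence $\lambda_{c,k}\to\infty$ is built into that theorem through the usual indirect argument: if $\lambda_{c,k}\to\lambda_\infty<\infty$, then \PS{} forces the set $K_\infty=\set{u\in\N_c:\Lambda_c'(u)=0,\ \Lambda_c(u)\le\lambda_\infty}$ to be compact and thus of finite cohomological index, whereas the construction above produces symmetric sets of arbitrarily large index in any sublevel slightly above $\lambda_\infty$, contradicting compactness after an equivariant deformation away from $K_\infty$.
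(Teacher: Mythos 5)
Your proposal is correct and follows the same route as the paper: the paper proves this proposition simply by invoking \cite[Proposition 3.52]{MR2640827}, with the needed hypotheses (complete symmetric $C^1$-Finsler manifold from Proposition \ref{NEHARI}, positivity of the infimum from Lemma \ref{lbb}, and the \PS{} condition from the preceding lemma) supplied by the earlier results of the section. Your additional verifications — evenness, the infinite index of $\N_c$ via odd maps of finite-dimensional spheres (equivalently via the odd homeomorphism with $\M$), and the applicability of the abstract result in the Finsler setting — are exactly the details the paper leaves implicit.
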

\subsection{Estimates on Talenti Functions and Study of $\Lambda_{c}$, $c\ge c^*$}
As noted in the last section the inequality $\inf_{u\in \mathcal{N}_c}\Lambda_c(u)>0$ is not a restriction since equation \eqref{103} has no positive solutions when $\lambda\le 0$. In this section we show that for some cases, which depend on $\alpha,N,p$, we have that $\inf_{u\in \mathcal{N}_c}\Lambda_{c^*}(u)=0$. We also show that $\inf_{u\in \mathcal{N}_c}\Lambda_c(u)<0$ if $c>c^*$.

\begin{lemma}\label{ltalenti} Let $\rho>0$ and $\theta\in C_0^\infty(\mathbb{R}^N)$ such that $\operatorname{supp}\theta \subset B_{\rho}(0)$, $0\le \theta\le 1$, $\theta =1$ on $B_{\rho/2}(0)$ and $\theta$ is radial. Given $\varepsilon>0$ let
	\begin{equation*}
		u_\varepsilon(x)=\frac{\theta(x)\varepsilon^{\frac{N-2}{2}}}{(\varepsilon^2+|x|^2)^{\frac{N-2}{2}}}\ \ \ \mbox{and}\ \ \ v_\varepsilon(x)=\frac{u_\varepsilon(x)}{\|u_\varepsilon(x)\|_{2^*}}.
	\end{equation*}
	Then
	\begin{equation*}
		\|\nabla v_\varepsilon\|_2^2=S+O(\varepsilon^{N-2}),
	\end{equation*}
	\begin{equation*}
\|v_\varepsilon\|_\ell^\ell = 
	\begin{cases}
		O(\varepsilon^{(2N-(N-2)\ell)/2}) & \text{if } \ell > \frac{N}{N-2},\\
			O(\varepsilon^{N/2}|\log\varepsilon|)       & \text{if } \ell = \frac{N}{N-2}, \\
			O(\varepsilon^{(N-2)\ell/2})   & \text{if } \ell < \frac{N}{N-2},
	\end{cases}
	\end{equation*}
	\begin{equation*}
	\|v_\varepsilon\|_\ell^\ell\ge C \varepsilon^{(2N-(N-2)\ell)/2},\ \ell\in (2,2^*),
	\end{equation*}
	where $C>0$ is a constant. Finally
	\begin{equation*}
\int |I_{\alpha/2}\star |v_{\varepsilon}|^p|^2\le 
	\begin{cases}
		O(\varepsilon^{N+\alpha-(N-2)p}) & \text{if } \frac{2Np}{N+\alpha} > \frac{N}{N-2},\\
		O(\varepsilon^{(N+\alpha)/2}|\log\varepsilon|^{\frac{N+\alpha}{N}})       & \text{if }  \frac{2Np}{N+\alpha} = \frac{N}{N-2}, \\
		O(\varepsilon^{(N-2)p})   & \text{if }  \frac{2Np}{N+\alpha} < \frac{N}{N-2}.
	\end{cases}
\end{equation*}
\begin{proof} For the estimates concerning $	\|\nabla v_\varepsilon\|_2^2$ and $	\|v_\varepsilon\|_\ell^\ell$ see \cite[Lemma 1.1]{BrNi} and \cite[Proof of Lemma 3.4]{BaCoSeSo}. To prove the last estimate, note by the Hardy--Littlewood--Sobolev inequality that there exists $C>0$ such that
	\begin{equation*}
		\int |I_{\alpha/2}\star |v_{\varepsilon}|^p|^2\le C\left(\int |v_{\varepsilon}|^{\frac{2Np}{N+\alpha}}\right)^{\frac{N+\alpha}{N}}, u\in E_R^{\alpha,p}(\mathbb{R}^N),
	\end{equation*}
	and hence the proof is complete.
\end{proof}
\end{lemma}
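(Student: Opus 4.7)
The plan is to reduce every estimate to a single change of variables $y = x/\varepsilon$ that transforms $v_\varepsilon$ into a normalized, compactly truncated Talenti bubble, and then apply either the classical Brezis--Nirenberg computations or the Hardy--Littlewood--Sobolev inequality.

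For the gradient bound $\|\nabla v_\varepsilon\|_2^2 = S + O(\varepsilon^{N-2})$ and the three-case $L^\ell$ upper bounds, I would cite the classical truncated Talenti estimates from \cite{BrNi} and their refinements in \cite{BaCoSeSo}. The mechanism is that after rescaling, one is reduced to integrals of the form $\int_{|y|\le \rho/\varepsilon} (1+|y|^2)^{-(N-2)\ell/2}\,dy$; these converge, diverge logarithmically, or diverge polynomially precisely according to whether $\ell$ is greater than, equal to, or less than $N/(N-2)$, and the $L^{2^*}$-normalization $\|u_\varepsilon\|_{2^*}$ (itself of the form $C + O(\varepsilon^N)$) supplies the overall $\varepsilon$-power in each case.

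For the lower bound $\|v_\varepsilon\|_\ell^\ell \ge C\, \varepsilon^{(2N-(N-2)\ell)/2}$ with $\ell\in(2,2^*)$, the same rescaling applies but now one simply extracts a positive constant. Restricting the integral to $B_{\rho/(2\varepsilon)}(0)$, where the cutoff $\theta$ equals $1$, and using that $\ell<2^*$ makes $(1+|y|^2)^{-(N-2)\ell/2}$ integrable at infinity, one gets $\int_{\mathbb{R}^N}(1+|y|^2)^{-(N-2)\ell/2}\,dy > 0$ in the limit, from which the lower bound on $\|v_\varepsilon\|_\ell^\ell$ follows after restoring the $L^{2^*}$-normalization.

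The last estimate, on the nonlocal Riesz term, is the main new ingredient and I expect it to be the only step requiring careful bookkeeping. I would apply the Hardy--Littlewood--Sobolev inequality to obtain
\[
\int |I_{\alpha/2}\star |v_\varepsilon|^p|^2 \;\le\; C\left(\int |v_\varepsilon|^{\frac{2Np}{N+\alpha}}\right)^{\!\!\frac{N+\alpha}{N}},
\]
and then insert the already established $L^\ell$ estimate with $\ell = 2Np/(N+\alpha)$. The three dichotomy conditions $2Np/(N+\alpha) \gtreqless N/(N-2)$ are exactly the three cases of the $L^\ell$ bound; raising each to the power $(N+\alpha)/N$ and simplifying the exponents reproduces the three claimed decay rates $\varepsilon^{N+\alpha-(N-2)p}$, $\varepsilon^{(N+\alpha)/2}|\log\varepsilon|^{(N+\alpha)/N}$, and $\varepsilon^{(N-2)p}$. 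The main obstacle is purely algebraic verification that these exponents line up correctly in all three regimes; no further analytic work is required.
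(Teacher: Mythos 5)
Your proposal is correct and follows essentially the same route as the paper: the gradient and $L^\ell$ estimates (including the lower bound) are quoted from \cite{BrNi} and \cite{BaCoSeSo}, and the Riesz term is handled by Hardy--Littlewood--Sobolev with the exponent $\ell=2Np/(N+\alpha)$, after which the three cases follow from the $L^\ell$ bounds by the exponent arithmetic you describe.
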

Our goal now is to study $\Lambda_{c^*}(v_{\varepsilon})$, so the following estimates are in order.
\begin{lemma}\label{ll1} If $\varepsilon>0$ is small, then $	\frac{O(\varepsilon^{N-2})}{2}+\frac{1}{2p}\int |I_{\alpha/2}\star |v_{\varepsilon}|^p|^2\le $
	
			\begin{equation*}
		\begin{cases}
			O(\varepsilon^{N-2}) & \text{if }  \frac{2Np}{N+\alpha} > \frac{N}{N-2}\ \mbox{and}\ N+\alpha-(N-2)(p+1)\ge 0,\\
			O(\varepsilon^{N+\alpha-(N-2)p} )      & \text{if } \frac{2Np}{N+\alpha} > \frac{N}{N-2}\ \mbox{and}\ N+\alpha-(N-2)(p+1)< 0, \\
				O(\varepsilon^{N-2} )      & \text{if } \frac{2Np}{N+\alpha} = \frac{N}{N-2}\ \mbox{and}\ 4+\alpha-N>0, \\
				O(\varepsilon^{(N+\alpha)/2}|\log\varepsilon|^{\frac{N+\alpha}{N}})      & \text{if } \frac{2Np}{N+\alpha} = \frac{N}{N-2}\ \mbox{and}\ 4+\alpha-N\le 0, \\
						O(\varepsilon^{N-2} )      & \text{if } \frac{2Np}{N+\alpha} < \frac{N}{N-2}. \\
		\end{cases}
	\end{equation*}

\end{lemma}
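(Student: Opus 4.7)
The strategy is essentially a routine comparison: by Lemma \ref{ltalenti} we already know the size of $\int|I_{\alpha/2}\star|v_\varepsilon|^p|^2$ in each of the three regimes determined by the sign of $\frac{2Np}{N+\alpha}-\frac{N}{N-2}$, so it remains only to combine that estimate with the $O(\varepsilon^{N-2})$ term and decide which exponent is smaller (recall that for $\varepsilon\to 0^+$ and positive exponents $a,b$, we have $\varepsilon^a+\varepsilon^b=O(\varepsilon^{\min(a,b)})$, with logarithmic factors treated separately when the exponents coincide).

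Concretely, the plan is to treat the three regimes of Lemma \ref{ltalenti} in turn. If $\frac{2Np}{N+\alpha}>\frac{N}{N-2}$, the Coulomb term is $O(\varepsilon^{N+\alpha-(N-2)p})$, and the condition $N-2\le N+\alpha-(N-2)p$ is equivalent to $N+\alpha-(N-2)(p+1)\ge 0$. So in that subcase the $O(\varepsilon^{N-2})$ contribution dominates (giving $O(\varepsilon^{N-2})$), and in the opposite subcase the Coulomb contribution dominates (giving $O(\varepsilon^{N+\alpha-(N-2)p})$). If $\frac{2Np}{N+\alpha}=\frac{N}{N-2}$, the Coulomb term is $O(\varepsilon^{(N+\alpha)/2}|\log\varepsilon|^{(N+\alpha)/N})$, and the comparison $N-2$ vs.\ $(N+\alpha)/2$ is governed by the sign of $4+\alpha-N$: when $4+\alpha-N>0$, one has $N-2<(N+\alpha)/2$ and so $O(\varepsilon^{N-2})$ wins, while when $4+\alpha-N\le 0$ one has $(N+\alpha)/2\le N-2$, and even at equality the logarithmic factor makes the Coulomb term dominate, giving $O(\varepsilon^{(N+\alpha)/2}|\log\varepsilon|^{(N+\alpha)/N})$. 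Finally, if $\frac{2Np}{N+\alpha}<\frac{N}{N-2}$, the Coulomb term is $O(\varepsilon^{(N-2)p})$, and since $p>1$ gives $(N-2)p>N-2$, the $O(\varepsilon^{N-2})$ term always dominates.

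The argument has no real obstacle: the hard analytic work (the Hardy--Littlewood--Sobolev estimate and the asymptotic profile of $v_\varepsilon$) is already encapsulated in Lemma \ref{ltalenti}, and what remains is purely bookkeeping of exponents. The only mild care needed is at the boundary cases $N+\alpha-(N-2)(p+1)=0$ and $4+\alpha-N=0$, where one checks that the stated bounds still hold (in the first case both exponents equal $N-2$ so either label is correct, and in the second the logarithmic factor ensures the Coulomb bound absorbs the $\varepsilon^{N-2}$ term). Writing the case distinction in the same order as in the statement makes the correspondence with the five cases of Theorem \ref{thm1}(iv) transparent.
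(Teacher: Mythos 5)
Your proof is correct and follows exactly the route the paper intends: the paper's own proof of this lemma is simply ``straightforward from Lemma \ref{ltalenti} and the properties of $\O$'', and your case-by-case exponent comparison (including the boundary checks at $N+\alpha-(N-2)(p+1)=0$ and $4+\alpha-N=0$) is precisely that bookkeeping spelled out.
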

\begin{proof} Straightforward from Lemma \ref{ltalenti} and the properties of $O$.
\end{proof}
Now the most important estimate:
%
\begin{lemma}\label{la1} If $\varepsilon>0$ is small, then $	\frac{	\frac{O(\varepsilon^{N-2})}{2}+\frac{1}{2p}\int |I_{\alpha/2}\star |v_{\varepsilon}|^p|^2}{\frac{1}{r}\int |v_\varepsilon|^r}\le $
	
	\begin{equation*}
		\begin{cases}
		O(\varepsilon^{[(N-2)r-4]/2}) & \text{if }  \frac{2Np}{N+\alpha} > \frac{N}{N-2}\ \mbox{and}\ N+\alpha-(N-2)(p+1)\ge 0,\\
		O(\varepsilon^{[2\alpha+(N-2)(r-2p)]/2} )      & \text{if } \frac{2Np}{N+\alpha} > \frac{N}{N-2}\ \mbox{and}\ N+\alpha-(N-2)(p+1)< 0, \\
		O(\varepsilon^{[(N-2)r-4]/2})     & \text{if } \frac{2Np}{N+\alpha} = \frac{N}{N-2}\ \mbox{and}\ 4+\alpha-N>0, \\
		O(\varepsilon^{[\alpha-N+(N-2)r]/2}|\log\varepsilon|^{\frac{N+\alpha}{N}})      & \text{if } \frac{2Np}{N+\alpha} = \frac{N}{N-2}\ \mbox{and}\ 4+\alpha-N\le 0, \\
		O(\varepsilon^{[(N-2)r-4]/2})     & \text{if } \frac{2Np}{N+\alpha} < \frac{N}{N-2}. \\
	\end{cases}
	\end{equation*}
	
\end{lemma}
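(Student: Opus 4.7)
The plan is to bound the numerator from above using Lemma \ref{ll1} (which is already stated) and the denominator from below using the third estimate in Lemma \ref{ltalenti}. First I would check that the lower bound on $\|v_\varepsilon\|_r^r$ applies, i.e. that $r\in(2,2^*)$. Since $p>1$, one has
\[
q=\frac{2(2p+\alpha)}{2+\alpha}>\frac{2(2+\alpha)}{2+\alpha}=2,
\]
and since $p<(N+\alpha)/(N-2)$ a direct computation gives $q<2^*$. By hypothesis $(H_1)$ we have $r\in[q,2^*)\subset(2,2^*)$, so Lemma \ref{ltalenti} provides a constant $C>0$ with
\[
\frac{1}{r}\int |v_\varepsilon|^r\ge \frac{C}{r}\,\varepsilon^{(2N-(N-2)r)/2},
\]
and consequently
\[
\frac{1}{\frac{1}{r}\int |v_\varepsilon|^r}\le C'\,\varepsilon^{((N-2)r-2N)/2}.
\]

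Next I would multiply this bound by each of the five upper bounds for the numerator furnished by Lemma \ref{ll1} and simplify by adding exponents. In the three cases where the numerator is $O(\varepsilon^{N-2})$ (cases 1, 3, 5), the resulting exponent is
\[
(N-2)+\frac{(N-2)r-2N}{2}=\frac{(N-2)r-4}{2},
\]
which matches the claimed right-hand side. In case 2, where the numerator is $O(\varepsilon^{N+\alpha-(N-2)p})$, the sum of exponents is
\[
N+\alpha-(N-2)p+\frac{(N-2)r-2N}{2}=\frac{2\alpha+(N-2)(r-2p)}{2},
\]
again as claimed. Finally, in case 4 where the numerator is $O(\varepsilon^{(N+\alpha)/2}|\log\varepsilon|^{(N+\alpha)/N})$, the logarithmic factor is untouched and the power of $\varepsilon$ becomes
\[
\frac{N+\alpha}{2}+\frac{(N-2)r-2N}{2}=\frac{\alpha-N+(N-2)r}{2}.
\]

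The whole argument is essentially arithmetic manipulation of exponents; there is no analytic subtlety beyond what has already been done in Lemmas \ref{ltalenti} and \ref{ll1}. The only potential pitfall is making sure that the lower bound for $\|v_\varepsilon\|_r^r$ can be invoked for the endpoint $r=q$, which is handled by the inequality $q>2$ above, valid without the extra assumption $\alpha>1$ of $(H_1)$ (that assumption plays a role elsewhere, e.g.\ in Lemma \ref{lbound}, but is not needed here). With that settled, the five estimates follow by direct substitution.
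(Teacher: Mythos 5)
Your proposal is correct and is exactly the argument the paper intends: the paper dismisses the proof as ``straightforward from Lemma \ref{ll1}'', and the natural completion is precisely your combination of the upper bounds of Lemma \ref{ll1} with the lower bound $\|v_\varepsilon\|_r^r\ge C\varepsilon^{(2N-(N-2)r)/2}$ from Lemma \ref{ltalenti}, valid since $r\in[q,2^*)\subset(2,2^*)$. Your exponent arithmetic in all five cases checks out, and the remark that only $q>2$ (not $\alpha>1$) is needed here is a sensible verification.
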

\begin{proof} Straightforward from Lemma \ref{ll1}.
\end{proof}

\begin{proposition}\label{lbb1} $\inf_{u\in \mathcal{N}_{c^*}}\Lambda_{c^*}(u)=0$ in the following cases:
	\begin{equation*}
	\begin{cases}
 \frac{2Np}{N+\alpha} > \frac{N}{N-2}, N+\alpha-(N-2)(p+1)\ge 0, (N-2)r-4>0,\\
 \frac{2Np}{N+\alpha} > \frac{N}{N-2}, N+\alpha-(N-2)(p+1)< 0, 2\alpha+(N-2)(r-2p)>0, \\
\frac{2Np}{N+\alpha} = \frac{N}{N-2},4+\alpha-N>0, (N-2)r-4>0, \\
 \frac{2Np}{N+\alpha} = \frac{N}{N-2}, 4+\alpha-N\le 0, \alpha-N+(N-2)r>0, \\
	\frac{2Np}{N+\alpha} < \frac{N}{N-2}, (N-2)r-4>0. \\
\end{cases}
\end{equation*}
\end{proposition}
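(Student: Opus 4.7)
The lower bound $\inf_{\mathcal{N}_{c^*}}\Lambda_{c^*}\ge 0$ is already given by Lemma \ref{ineinfipositive} applied with $c=c^*$, so the plan is to exhibit an explicit minimising sequence $w_\varepsilon\in\mathcal{N}_{c^*}$ with $\Lambda_{c^*}(w_\varepsilon)\to 0$ as $\varepsilon\to 0^+$. The natural candidate is supplied by the normalised Talenti function $v_\varepsilon$ of Lemma \ref{ltalenti} (so that $\|v_\varepsilon\|_{2^*}=1$ and $G(v_\varepsilon)=1/2^*$), projected onto $\mathcal{N}_{c^*}$ via the scaling $w_\varepsilon:=(v_\varepsilon)_{t_\varepsilon}$, where $t_\varepsilon>0$ is the unique parameter produced by Proposition \ref{NEHARI}(ii), determined by the Nehari equation
\[
(s_r-s_q)\,I(v_\varepsilon)\,t_\varepsilon^{s_q}+(s_{2^*}-s_r)\,G(v_\varepsilon)\,t_\varepsilon^{s_{2^*}}=s_r\,c^*.
\]

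The first step is a uniform control on $t_\varepsilon$. Writing $I(v_\varepsilon)=S/2+A(\varepsilon)$ with
\[
A(\varepsilon):=\tfrac{1}{2}O(\varepsilon^{N-2})+\tfrac{1}{2p}\int |I_{\alpha/2}\star |v_\varepsilon|^p|^2\longrightarrow 0
\]
by Lemmas \ref{ltalenti} and \ref{ll1}, the left-hand side of the Nehari equation is strictly increasing in $t$ with coefficients converging to strictly positive limits, so $t_\varepsilon$ stays in a fixed compact subinterval of $(0,\infty)$; when $r=q$ (whence $s_r=s_q$) only the second term is present and $t_\varepsilon$ is actually a fixed positive constant. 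Next I bound $\Lambda_{c^*}(w_\varepsilon)=\varphi_{c^*,v_\varepsilon}(t_\varepsilon)$ from above by splitting the numerator as
\[
t_\varepsilon^{s_q}I(v_\varepsilon)-t_\varepsilon^{s_{2^*}}G(v_\varepsilon)-c^*=\bigl[z_{v_\varepsilon}(t_\varepsilon)-c^*\bigr]+A(\varepsilon)\,t_\varepsilon^{s_q},
\]
with $z_{v_\varepsilon}$ as in Lemma \ref{lt0}. Since $\|v_\varepsilon\|_{2^*}=1$, Lemma \ref{lt0} yields $z_{v_\varepsilon}(t)\le c^*$ for every $t>0$, so the bracketed term is non-positive; dividing by $t_\varepsilon^{s_r}F(v_\varepsilon)$ produces
\[
0\le\Lambda_{c^*}(w_\varepsilon)\le\frac{A(\varepsilon)}{t_\varepsilon^{s_r-s_q}F(v_\varepsilon)}\le C\,\frac{A(\varepsilon)}{F(v_\varepsilon)}
\]
for some constant $C>0$ independent of $\varepsilon$.

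The final step is to invoke Lemma \ref{la1}, which estimates precisely the quotient $A(\varepsilon)/F(v_\varepsilon)$. In each of the five regimes listed in the hypothesis Lemma \ref{la1} provides a bound of order $O(\varepsilon^{\gamma})$ (with a harmless logarithmic factor in case 4), where $\gamma=[(N-2)r-4]/2$ in cases 1, 3 and 5, $\gamma=[2\alpha+(N-2)(r-2p)]/2$ in case 2, and $\gamma=[\alpha-N+(N-2)r]/2$ in case 4; the third inequality appearing in each case of the hypothesis is exactly the statement $\gamma>0$. Therefore $\Lambda_{c^*}(w_\varepsilon)\to 0$, which combined with the initial lower bound yields $\inf_{\mathcal{N}_{c^*}}\Lambda_{c^*}=0$. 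The only genuinely delicate point is the uniform control on $t_\varepsilon$, but this is essentially free from the monotonicity of the Nehari equation together with $I(v_\varepsilon)\to S/2>0$; beyond tracking the exponents of $\varepsilon$ handed to us by Lemma \ref{la1}, I do not anticipate further obstacles.
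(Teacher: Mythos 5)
Your proposal is correct and follows essentially the same route as the paper: test with the normalized Talenti bubble $v_\varepsilon$, project onto $\mathcal{N}_{c^*}$ via $t_{c^*}(v_\varepsilon)$, split the numerator using $z_{v_\varepsilon}$ and Lemma \ref{lt0} so that only the term $A(\varepsilon)/F(v_\varepsilon)$ survives, control $t_{c^*}(v_\varepsilon)$ away from zero via the constraint $H((v_\varepsilon)_{t_{c^*}(v_\varepsilon)})=s_rc^*$, and conclude with Lemma \ref{la1} together with the lower bound from Lemma \ref{ineinfipositive} at $c=c^*$. The only (harmless) difference is that you establish two-sided bounds on $t_\varepsilon$, slightly more than the paper's "away from zero," which is all the argument needs since $s_q-s_r\le 0$.
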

\begin{proof} Note that
\begin{eqnarray*}
\varphi_{c^*,v_\varepsilon}(t)&=&t^{-s_r}\frac{\left[\frac{S+O(\varepsilon^{N-2})}{2}+\frac{1}{2p}\int |I_{\alpha/2}\star |v_{\varepsilon}|^p|^2\right]t^{s_q}-\frac{t^{s_{2^*}}}{2^*}-c^*}{F(v_\varepsilon)} \\
&=& t^{s_q-s_r }\frac{\frac{O(\varepsilon^{N-2})}{2}+\frac{1}{2p}\int |I_{\alpha/2}\star |v_{\varepsilon}|^p|^2}{F(v_\varepsilon)}+t^{-s_r}\frac{z_{v_\varepsilon}(t)-c^*}{F(v_\varepsilon)}, t>0,\ \mbox{small}\ \varepsilon>0,
\end{eqnarray*}
where $v_\varepsilon$ is given by Lemma \ref{ltalenti}. By Lemma \ref{lt0} we conclude that 

\begin{equation*}
	\varphi_{c^*,v_\varepsilon}(t)\le   t^{s_q-s_r}\frac{\frac{O(\varepsilon^{N-2})}{2}+\frac{1}{2p}\int |I_{\alpha/2}\star |v_{\varepsilon}|^p|^2}{F(v_\varepsilon)}, t>0,\ \mbox{small}\ \varepsilon>0.
\end{equation*}

Since $0\le 	\Lambda_{c^*}((v_{\varepsilon})_{t_{c^*}(v_{\varepsilon})})$, $\varepsilon>0$, by Lemma \ref{ineinfipositive}, then
\begin{equation}\label{talentiine}
0<	\Lambda_{c^*}((v_{\varepsilon})_{t_{c^*}(v_{\varepsilon})})=	\varphi_{c^*,v_\varepsilon}(t_{c^*}(v_{\varepsilon}))\le    t_{c^*}(v_{\varepsilon})^{s_q-s_r }\frac{\frac{O(\varepsilon^{N-2})}{2}+\frac{1}{2p}\int |I_{\alpha/2}\star |v_{\varepsilon}|^p|^2}{F(v_\varepsilon)}, \varepsilon>0. 
\end{equation}
Now we claim that $t_{c^*}(v_{\varepsilon})$ is away from zero. In fact, from $H((v_{\varepsilon})_{t_{c^*}(v_{\varepsilon})})=0$, we conclude that
\begin{equation*}
(s_r-s_q)t_{c^*}(v_{\varepsilon})^{s_q}\left(\frac{S+O(\varepsilon^{N-2})}{2}+\frac{1}{2p}\int |I_{\alpha/2}\star |v_{\varepsilon}|^p|^2\right)+\frac{s_{2^*}-s_r}{2^*}t_{c^*}(v_{\varepsilon})^{s_{2^*}}=s_rc^*, \varepsilon>0,
\end{equation*}
which implies, by Lemma \ref{ltalenti}, the desired claim. Since $t_{c^*}(v_{\varepsilon})$ is away from zero we conclude from \eqref{talentiine} and Lemma \ref{la1} that
\begin{equation*}
\lim_{\varepsilon\to 0}	\Lambda_{c^*}((v_{\varepsilon})_{t_{c^*}(v_{\varepsilon})})=0,
\end{equation*}
and the proof is complete.
\end{proof}

\begin{proposition}\label{lbb2} If $c>c^*$, then $\inf_{u\in \mathcal{N}_{c}}\Lambda_{c}(u)<0$.
\end{proposition}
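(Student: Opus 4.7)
The plan is to exhibit, for each $c > c^*$, an element of $\mathcal{N}_c$ on which $\Lambda_c$ is negative, built from the Talenti-type bumps $v_\varepsilon$ of Lemma \ref{ltalenti}. By Proposition \ref{NEHARI}$(ii)$, for every small $\varepsilon > 0$ there is a unique $t_\varepsilon := t_c(v_\varepsilon) > 0$ maximizing $\varphi_{c,v_\varepsilon}$, so that $w_\varepsilon := (v_\varepsilon)_{t_\varepsilon}$ belongs to $\mathcal{N}_c$ and
\[
\Lambda_c(w_\varepsilon) \;=\; \varphi_{c,v_\varepsilon}(t_\varepsilon) \;=\; \frac{t_\varepsilon^{s_q}\,I(v_\varepsilon) \,-\, t_\varepsilon^{s_{2^*}}\,G(v_\varepsilon) \,-\, c}{t_\varepsilon^{s_r}\,F(v_\varepsilon)}.
\]
Since the denominator is positive, it suffices to force the numerator to be negative. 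Using $\|v_\varepsilon\|_{2^*} = 1$, so $G(v_\varepsilon) = 1/2^*$, together with $\|\nabla v_\varepsilon\|_2^2 = S + O(\varepsilon^{N-2})$ from Lemma \ref{ltalenti}, I rewrite the numerator as
\[
z_{v_\varepsilon}(t_\varepsilon) + t_\varepsilon^{s_q}\left[\frac{O(\varepsilon^{N-2})}{2} + \frac{1}{2p}\int |I_{\alpha/2}\star |v_\varepsilon|^p|^2\right] - c,
\]
and Lemma \ref{lt0} gives $z_{v_\varepsilon}(t_\varepsilon) \le \max_{t>0} z_{v_\varepsilon}(t) = c^*$, so the numerator is bounded above by $(c^* - c) + t_\varepsilon^{s_q}\cdot[\text{correction}]$.

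The crucial ingredient is a uniform two-sided control of $t_\varepsilon$. The constraint $H(w_\varepsilon) = s_r c$ from Proposition \ref{NEHARI}$(ii)$ reads
\[
(s_r - s_q)\,t_\varepsilon^{s_q}\,I(v_\varepsilon) + (s_{2^*} - s_r)\,t_\varepsilon^{s_{2^*}}\,G(v_\varepsilon) = s_r c.
\]
Since both summands on the left are nonnegative and $G(v_\varepsilon) = 1/2^*$ is a positive constant, this immediately yields an upper bound $t_\varepsilon \le \bigl(s_r c\cdot 2^*/(s_{2^*} - s_r)\bigr)^{1/s_{2^*}}$ independent of $\varepsilon$; a lower bound away from $0$ follows exactly as in the proof of Proposition \ref{lbb1}, since $I(v_\varepsilon)$ and $G(v_\varepsilon)$ remain bounded uniformly in $\varepsilon$ while the left-hand side must equal the positive constant $s_r c$.

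Finally, Lemma \ref{ltalenti} implies that $O(\varepsilon^{N-2})$ and $\int |I_{\alpha/2}\star|v_\varepsilon|^p|^2$ both tend to $0$ as $\varepsilon \to 0^+$, because every exponent of $\varepsilon$ occurring there is strictly positive (the exponent $N+\alpha-(N-2)p$ is positive thanks to the standing hypothesis $p<(N+\alpha)/(N-2)$). Since $t_\varepsilon$ stays in a compact subinterval of $(0,\infty)$, the correction term is $o(1)$, while $c^* - c < 0$ is a fixed strictly negative constant. Hence the numerator is negative for all sufficiently small $\varepsilon$, which gives $\Lambda_c(w_\varepsilon) < 0$ and consequently $\inf_{\mathcal{N}_c} \Lambda_c < 0$. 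The main obstacle is the two-sided uniform control on $t_\varepsilon$; once that is in hand the remainder is a direct bookkeeping of the Talenti estimates combined with the scaling identity \eqref{scaling}.
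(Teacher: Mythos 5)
Your proposal is correct and takes essentially the same route as the paper's proof: scale the Talenti bump onto $\mathcal{N}_c$ at $t_\varepsilon=t_c(v_\varepsilon)$, split the numerator using $z_{v_\varepsilon}(t_\varepsilon)\le c^*$ from Lemma \ref{lt0}, and let the correction terms $\tfrac{1}{2}O(\varepsilon^{N-2})+\tfrac{1}{2p}\int|I_{\alpha/2}\star|v_\varepsilon|^p|^2$ vanish via Lemma \ref{ltalenti}, so that the fixed negative quantity $c^*-c$ dominates. The only difference is that you make explicit the uniform upper bound on $t_\varepsilon$ obtained from the constraint $H((v_\varepsilon)_{t_\varepsilon})=s_rc$ (the paper leaves this implicit, though its $t_\varepsilon^{-s_r}[c^*-c]$ term needs it), which is a clarification rather than a deviation.
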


\begin{proof} Write $t_{\varepsilon}=t_{c}(v_{\varepsilon})$ and note that 
	
	\begin{eqnarray*}
		\varphi_{c^*,v_\varepsilon}(t_{\varepsilon})&=&t_{\varepsilon}^{s_r-s_q }\frac{\frac{O(\varepsilon^{N-2})}{2}+\frac{1}{2p}\int |I_{\alpha/2}\star |v_{\varepsilon}|^p|^2+t_{\varepsilon}^{-s_r}[z_{v_\varepsilon}(t_{\varepsilon})-c]}{F(v_\varepsilon)} \\
		&\le & t_{\varepsilon}^{s_r-s_q }\frac{\frac{O(\varepsilon^{N-2})}{2}+\frac{1}{2p}\int |I_{\alpha/2}\star |v_{\varepsilon}|^p|^2+t_{\varepsilon}^{-s_r}[c^*-c]}{F(v_\varepsilon)},\ \mbox{small}\ \varepsilon>0,
	\end{eqnarray*}
\end{proof}
which implies, by Lemma \ref{ltalenti}, that
\begin{equation*}
	\Lambda_c(t_{\varepsilon}v_\varepsilon)=	\varphi_{c^*,v_\varepsilon}(t_{\varepsilon})<0, \ \mbox{small}\ \varepsilon>0,
\end{equation*}
and the proof is complete.
\subsection{Analysis of the curves $\lambda_{c,k}$}

In this section we will analyze the behavior of $(0,c^*)\ni c\mapsto \lambda_{c,k}$. By Proposition \ref{NEHARI} we know that $\mathcal{M}$ and $\mathcal{N}_c$ are homeomorphic through a odd homemorphism, so it follows from the monotonicity of the index that
\begin{equation} \label{31}
	\lambda_{c,k} = \inf_{M \in \F_k}\, \sup_{u \in M}\, \widetilde{\Lambda}_c(u),
\end{equation}
where $\mathcal{F}_k\in \mathcal{F}$ and $\mathcal{F}$ denote the class of symmetric subsets of $\mathcal{M}$.

For fixed $u \in \M$, let
\[
\widetilde{H}(c,t) =H(u_t)-s_rc, \quad (c,t) \in  (0,c^*)\times(0,\infty)
\]
and note that
\begin{equation} \label{32}
\widetilde{H}(c,t_c(u)) =H(u_{t_c(u)}) -s_rc= 0.
\end{equation}
We have
\[
\frac{\partial \widetilde{H}}{\partial t}(c,t_c(u)) = s_q(s_r-s_q)t_c(u)^{s_q-1}I(u)+s_{2^*}(s_{2^*}-s_r)t_c(u)^{s_{2^*}-1}G(u) \ne 0,
\]
 so it follows from the implicit function theorem that the mapping $(0,c^*) \to (0,\infty),\, c \mapsto t_c(u)$ is $C^1$. Then the mapping $c \mapsto \widetilde{\Lambda}_c(u)$ is also $C^1$ and
\begin{equation} \label{33}
	\frac{\partial \widetilde{\Lambda}_c(u)}{\partial c} = \frac{t_c(u)^{-s_r-1} H(c,t_c(u))\, \dfrac{\partial t_c(u)}{\partial c} - t_c(u)^{-s_r}}{F(u)} = - \frac{t_c(u)^{-s_r}}{F(u)}
\end{equation}
by \eqref{32}. First we note that the curves $C_{k}:=\{(\lambda_{c,k},c): c\in (0,c^*)\}$ are nonincreasing.

\begin{lemma} \label{Lemma 1}
	The following hold.
	\begin{enumroman}
		\item For each $u \in \M$, the mapping $(0,c^*] \to \R,\, c \mapsto \widetilde{\Lambda}_c(u)$ is decreasing.
		\item For each $k \ge 1$, the mapping $(0,c^*] \to \R,\, c \mapsto \lambda_{c,k}$ is nonincreasing.
	\end{enumroman}
\end{lemma}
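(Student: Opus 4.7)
The plan is to read off part (i) directly from equation \eqref{33}, which the excerpt has already derived via the implicit function theorem, and then deduce (ii) by a monotonicity-of-sup-inf argument from (i).

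For (i), I would observe that $u \in \mathcal{M}$ implies $u \neq 0$, hence $F(u) > 0$, and $t_c(u) > 0$ for $c \in (0,c^*]$ by Proposition \ref{NEHARI}. Therefore the formula
\[
\frac{\partial \widetilde{\Lambda}_c(u)}{\partial c} = - \frac{t_c(u)^{-s_r}}{F(u)}
\]
established in \eqref{33} is strictly negative for every $c \in (0,c^*]$. Since $c \mapsto \widetilde{\Lambda}_c(u)$ is $C^1$ with strictly negative derivative, it is strictly decreasing on $(0,c^*]$.

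For (ii), I would fix $0 < c_1 < c_2 \le c^*$. By (i), for every $u \in \mathcal{M}$ we have $\widetilde{\Lambda}_{c_1}(u) > \widetilde{\Lambda}_{c_2}(u)$. Hence, for any symmetric set $M \in \mathcal{F}_k$,
\[
\sup_{u \in M} \widetilde{\Lambda}_{c_1}(u) \ge \sup_{u \in M} \widetilde{\Lambda}_{c_2}(u).
\]
Taking the infimum over $M \in \mathcal{F}_k$ on both sides and invoking the representation \eqref{31} for $\lambda_{c,k}$ yields $\lambda_{c_1,k} \ge \lambda_{c_2,k}$, which is the desired nonincreasing property.

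There is no real obstacle: the nontrivial analytic work (existence and $C^1$ regularity of $t_c(u)$ via the implicit function theorem, and the computation of $\partial_c \widetilde{\Lambda}_c(u)$) has already been carried out in the paragraph immediately preceding the lemma. The only things to verify are the positivity of $F(u)$ and $t_c(u)$ and the fact that pointwise monotonicity of a family of functions is preserved under $\sup$ and $\inf$; both are immediate. I would not need to distinguish the cases $r > q$ and $r = q$, since the derivative formula \eqref{33} holds uniformly for all $c \in (0,c^*]$.
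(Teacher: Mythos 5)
Your proposal is correct and follows essentially the same route as the paper: part (i) is read off from the sign of the derivative in \eqref{33}, and part (ii) follows from (i) together with the minimax representation \eqref{31}, with your extra remarks on the positivity of $F(u)$ and $t_c(u)$ merely making explicit what the paper leaves implicit.
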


\begin{proof}
	$(i)$ holds since $\dfrac{\partial \widetilde{\Lambda}_c(u)}{\partial c} < 0$ by
	\eqref{33}. $(ii)$ follows from $(i)$ and \eqref{31}.
\end{proof}

Lemma \ref{Lemma 1} implies that we can work in a suitable sublevel set $\widetilde{\Lambda}_c^T = \bgset{u \in \M : \widetilde{\Lambda}_c(u) \le T}$ in \eqref{31}.

\begin{lemma} \label{Lemma 2}
	Let $[a,b] \subset (0,c^*]$, let $k \ge 1$, and let $T > \lambda_{a,k}$. Denote by $\F_{b,T}$ the class of symmetric subsets of $\widetilde{\Lambda}_b^T$ and let $\F_{b,T,k} = \bgset{M \in \F_{b,T} : i(M) \ge k}$. Then
	\[
	\inf_{M \in \F_{b,T,k}}\, \sup_{u \in M}\, \widetilde{\Lambda}_c(u) = \lambda_{c,k} \quad \forall c \in [a,b].
	\]
\end{lemma}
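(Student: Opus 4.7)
The plan is to prove both inequalities in the claimed equality by essentially exploiting the monotonicity established in Lemma \ref{Lemma 1}, so that a sublevel set of $\widetilde{\Lambda}_a$ (and hence of $\widetilde{\Lambda}_b$) captures all near-optimal minimax classes for $\widetilde{\Lambda}_c$ when $c \in [a,b]$.

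The inequality $\inf_{M \in \F_{b,T,k}} \sup_{u \in M} \widetilde{\Lambda}_c(u) \ge \lambda_{c,k}$ is immediate. Indeed, every $M \in \F_{b,T,k}$ is by definition a symmetric subset of $\M$ with $i(M) \ge k$, so $\F_{b,T,k} \subset \F_k$; taking infimum over a smaller class can only increase the value, and the right-hand side is exactly the infimum over $\F_k$ by \eqref{31}.

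For the reverse inequality, fix $c \in [a,b]$ and $\eps > 0$ small enough that $\lambda_{a,k} + \eps < T$; this is possible by the hypothesis $T > \lambda_{a,k}$. By the definition of $\lambda_{c,k}$ via \eqref{31}, pick $M \in \F_k$ with
\[
\sup_{u \in M}\, \widetilde{\Lambda}_c(u) < \lambda_{c,k} + \eps.
\]
Since $c \le b$, Lemma \ref{Lemma 1}(i) gives $\widetilde{\Lambda}_b(u) \le \widetilde{\Lambda}_c(u)$ for every $u \in M$, and since $c \ge a$, Lemma \ref{Lemma 1}(ii) gives $\lambda_{c,k} \le \lambda_{a,k}$; combining,
\[
\widetilde{\Lambda}_b(u) \le \widetilde{\Lambda}_c(u) < \lambda_{c,k} + \eps \le \lambda_{a,k} + \eps < T, \quad u \in M.
\]
Thus $M \subset \widetilde{\Lambda}_b^T$, so $M \in \F_{b,T,k}$, and consequently
\[
\inf_{M' \in \F_{b,T,k}} \sup_{u \in M'}\, \widetilde{\Lambda}_c(u) \le \sup_{u \in M}\, \widetilde{\Lambda}_c(u) < \lambda_{c,k} + \eps.
\]
Letting $\eps \to 0^+$ yields the desired inequality and hence equality.

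There is no substantive obstacle here; the only subtlety is the order in which the monotonicities are invoked, namely Lemma \ref{Lemma 1}(i) in the variable $c$ (to move the constraint from $\widetilde{\Lambda}_c$ down to $\widetilde{\Lambda}_b$) and Lemma \ref{Lemma 1}(ii) (to bound $\lambda_{c,k}$ by $\lambda_{a,k}$ and stay strictly below $T$). The genuinely nontrivial work has already been done in Lemma \ref{Lemma 1} via the implicit function theorem computation \eqref{33}.
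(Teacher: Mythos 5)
Your proof is correct and follows essentially the same route as the paper: both inequalities rest on $\F_{b,T,k}\subset\F_k$ together with the monotonicities of Lemma \ref{Lemma 1} and the hypothesis $T>\lambda_{a,k}$. The paper phrases the reverse inequality contrapositively (any $M\in\F_k\setminus\F_{b,T,k}$ has $\sup_M\widetilde{\Lambda}_c\ge\sup_M\widetilde{\Lambda}_b>T>\lambda_{a,k}\ge\lambda_{c,k}$), which is just the dual of your $\eps$-argument showing near-optimal classes lie in $\widetilde{\Lambda}_b^T$.
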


\begin{proof}
	Clearly, $\F_{b,T,k} \subset \F_k$. If $M \in \F_k \setminus \F_{b,T,k}$, then
	\[
	\sup_{u \in M}\, \widetilde{\Lambda}_c(u) \ge \sup_{u \in M}\, \widetilde{\Lambda}_b(u) > T >	\lambda_{a,k} \ge \lambda_{c,k}
	\]
	by Lemma \ref{Lemma 1}.
\end{proof}
The following inequality will prove useful:
\begin{equation}\label{ineqnorms}
		\min\set{t_c(u)^{\frac{s_q}{2}},t_c(u)^{\frac{s_q}{2p}}} \norm{u}\le\norm{u_{t_c(u)}} \le \max \set{t_c(u)^{\frac{s_q}{2}},t_c(u)^{\frac{s_q}{2p}}} \norm{u}, u\in E_R^{\alpha,p}(\mathbb{R}^N)\setminus\{0\}, c>0.
\end{equation}
\begin{lemma} \label{Lemma 3}
	Let $[a,b] \subset (0,c^*)$ and let $T \in \R$. Then $\exists C > 0$ such that the following hold.
	\begin{enumroman}
		\item $F(u) \ge C^{-1} \quad \forall u \in \widetilde{\Lambda}_b^T$
		\item $C^{-1} \le t_c(u) \le C \quad \forall (u,c) \in \widetilde{\Lambda}_b^T \times [a,b]$
		\item $- C \le \dfrac{\partial \widetilde{\Lambda}_c(u)}{\partial c} \le - C^{-1} \quad \forall (u,c) \in \widetilde{\Lambda}_b^T \times [a,b]$
	\end{enumroman}
\end{lemma}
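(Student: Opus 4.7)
The plan is to deduce (iii) from (i) and (ii) via the closed form $\partial_c \widetilde{\Lambda}_c(u) = -t_c(u)^{-s_r}/F(u)$ established in \eqref{33}, so the substantive work lies in (i) and (ii). I would prove them in the order (i), then (ii), because the uniform positive lower bound on $G(u)$ required in the $r=q$ case of (ii) is extracted from (i) by the same interpolation estimate used in Lemma \ref{lbound}(ii).

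For (i), I argue by contradiction: suppose $u_n \in \widetilde{\Lambda}_b^T$ with $F(u_n) \to 0$, and set $v_n = (u_n)_{t_b(u_n)} \in \N_b$, so that $\Lambda_b(v_n) = \widetilde{\Lambda}_b(u_n) \le T$. By Lemma \ref{lbound}---item (i) when $r > q$ and the coercivity of $\Lambda_b$ from item (ii) when $r = q$---the sequence $(v_n)$ is bounded in $E_R^{\alpha,p}(\R^N)$. Since $u_n \in \M$, $\|u_n\|$ is bounded and bounded away from zero, so the two-sided scaling estimate \eqref{ineqnorms} applied to $v_n = (u_n)_{t_b(u_n)}$ forces $t_b(u_n)$ to be bounded above. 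Then $F(v_n) = t_b(u_n)^{s_r} F(u_n) \to 0$, and Corollary \ref{cinfinity} yields $\Lambda_b(v_n) \to \infty$, contradicting $\Lambda_b(v_n) \le T$.

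For (ii), the starting point is the identity $H(u_{t_c(u)}) = s_r c$ from Proposition \ref{NEHARI}(ii), which for $u \in \M$ reads
\[
(s_r - s_q)\, t_c(u)^{s_q} + (s_{2^*} - s_r)\, t_c(u)^{s_{2^*}}\, G(u) = s_r c.
\]
Because $\M$ is bounded and $E_R^{\alpha,p}(\R^N) \hookrightarrow L^{2^*}(\R^N)$, $G(u) \le \bar G$ uniformly on $\M$; combined with $s_r c \ge s_r a > 0$ this prevents $t_c(u)$ from becoming arbitrarily small, giving $t_c(u) \ge C^{-1}$. For the upper bound: when $r>q$ the first term alone yields $t_c(u)^{s_q} \le s_r b/(s_r - s_q)$; when $r = q$ the identity collapses to $(s_{2^*} - s_q)\, t_c(u)^{s_{2^*}} G(u) = s_q c$, so an upper bound on $t_c(u)$ is equivalent to a uniform lower bound on $G(u)$. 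To secure the latter I would combine (i), which gives $\|u\|_q \ge C^{-1}$, with the interpolation $\|u\|_q \le \|u\|_{q_1}^{1-\theta} \|u\|_{2^*}^\theta$ from the proof of Lemma \ref{lbound}(ii) (valid because $\alpha > 1$) and the Sobolev bound on $\|u\|_{q_1}$ on $\M$, which together force $\|u\|_{2^*}$, and hence $G(u)$, to be bounded below.

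Part (iii) then falls out at once from \eqref{33}: by (ii) the factor $t_c(u)^{-s_r}$ is pinched between two positive constants, while by (i) together with the Sobolev boundedness of $F$ on $\M$ the factor $F(u)^{-1}$ is also pinched between two positive constants, so $-t_c(u)^{-s_r}/F(u) \in [-C,-C^{-1}]$ uniformly on $\widetilde{\Lambda}_b^T \times [a,b]$. The main obstacle I foresee is the $r=q$ case of (ii): it is the only place the hypothesis $\alpha > 1$ is genuinely used, and the interpolation argument is essential in transferring the lower bound on $F$ from (i) into a lower bound on $G$; everything else is arithmetic from the scaled Nehari identity and \eqref{33}.
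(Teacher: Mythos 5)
Your proposal is correct, and for parts (i) and (iii) it is essentially the paper's argument: (i) is the same contradiction via boundedness of $t_b(u_n)$ and Corollary \ref{cinfinity}, and (iii) is read off from the closed form \eqref{33} together with (i), (ii) and the boundedness of $F$ on $\M$. Where you genuinely diverge is part (ii): the paper obtains the two-sided bound on $t_c(u)$ by sandwiching norms, i.e.\ it combines Lemma \ref{lbound}, the fact that $\norm{u}$ is pinched between positive constants on $\M$, and the scaling inequality \eqref{ineqnorms} applied to $u_{t_c(u)} \in \N_c$; you instead work directly from the fibering identity \eqref{101}, getting the lower bound on $t_c(u)$ from $G \le \bar G$ on $\M$ and $c \ge a$, the upper bound for $r>q$ from the first term alone, and for $r=q$ converting the upper bound into a uniform lower bound for $G$ on $\widetilde{\Lambda}_b^T$ via part (i) and the interpolation $\norm[q]{u} \le \norm[q_1]{u}^{1-\theta}\norm[2^*]{u}^{\theta}$ (the only place $\alpha>1$ enters). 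This algebraic route buys something: in the $r=q$ case Lemma \ref{lbound}(ii) does not give an upper bound for $\norm{u}$ on $\N_c$, so the paper's one-line appeal to \eqref{ineqnorms} leaves exactly the point you address explicitly; your interpolation step (which mirrors the coercivity proof of Lemma \ref{lbound}(ii)) supplies the missing lower bound on $G$ restricted to the sublevel set, which suffices because the statement of (ii) only concerns $u \in \widetilde{\Lambda}_b^T$. The paper's norm-sandwich argument is shorter when $r>q$, but your version is more self-contained and uniform across both cases.
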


\begin{proof}
	$(i)$ On the contrary, we can find a sequence $u_n\in \widetilde{\Lambda}_b^T$ such that $F(u_n)\to 0$ as $n\to \infty$. We claim that $F((u_n)_{t_b(u_n)})\to 0$ as $n\to \infty$.  Indeed, Lemma \ref{lbound} together with $u_n\in \mathcal{M}$ and inequality \eqref{ineqnorms} implies that $t_b(u_n)$ is bounded and so $F((u_n)_{t_b(u_n)})=t_b(u_n)^{s_r}F(u_n)\to 0$ as $n\to \infty$. Therefore, by Corollary \ref{cinfinity}, we conclude that $\widetilde{\Lambda}_b(u_n)\to \infty$, a contradiction.

	$(ii)$ This follow from Lemma \ref{lbound}, $u\in \mathcal{M}$ and inequality \eqref{ineqnorms}.

	$(iii)$ This is immediate from \eqref{33}, $(i)$, and $(ii)$.
\end{proof}

\begin{proposition} \label{Proposition 1}
	For each $k \ge 1$, the mapping $(0,c^*) \to \R,\, c \mapsto \lambda_{c,k}$ is continuous and decreasing.
\end{proposition}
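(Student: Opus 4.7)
The plan is to upgrade the nonincreasingness established in Lemma~\ref{Lemma 1}(ii) to strict monotonicity and local Lipschitz continuity by integrating the uniform derivative bounds of Lemma~\ref{Lemma 3}(iii) along $c$. Once local Lipschitz continuity on every compact subinterval of $(0,c^*)$ is available, both claims in the proposition follow immediately.

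Fix an arbitrary $[a,b]\subset(0,c^*)$, pick any $T>\lambda_{a,k}$, and invoke Lemma~\ref{Lemma 2} to rewrite
\[
\lambda_{c,k}=\inf_{M\in\F_{b,T,k}}\,\sup_{u\in M}\widetilde{\Lambda}_c(u),\qquad c\in[a,b].
\]
Crucially, every admissible $M$ now lies in $\widetilde{\Lambda}_b^T$, which is exactly the hypothesis needed to apply Lemma~\ref{Lemma 3}(iii): there is $C=C(a,b,T)>0$ such that
\[
-C\le\frac{\partial\widetilde{\Lambda}_c(u)}{\partial c}\le -C^{-1},\qquad (u,c)\in\widetilde{\Lambda}_b^T\times[a,b].
\]

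Next, for $a\le c_1<c_2\le b$ and $u\in\widetilde{\Lambda}_b^T$, since $c\mapsto\widetilde{\Lambda}_c(u)$ is $C^1$ (as recorded in the paragraph preceding Lemma~\ref{Lemma 1}), the fundamental theorem of calculus combined with the uniform bounds above yields the pointwise inequality
\[
C^{-1}(c_2-c_1)\le\widetilde{\Lambda}_{c_1}(u)-\widetilde{\Lambda}_{c_2}(u)\le C(c_2-c_1).
\]
Taking the supremum over $u\in M$ for arbitrary $M\in\F_{b,T,k}$ (the shift by a constant passes through the sup), and then the infimum over $M\in\F_{b,T,k}$, one obtains
\[
C^{-1}(c_2-c_1)\le\lambda_{c_1,k}-\lambda_{c_2,k}\le C(c_2-c_1).
\]
The left inequality shows that the mapping is strictly decreasing on $[a,b]$; the right inequality shows it is Lipschitz, hence continuous, on $[a,b]$. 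Since $[a,b]\subset(0,c^*)$ was arbitrary, both properties extend to all of $(0,c^*)$.

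I do not anticipate a real obstacle here: the result is essentially a bookkeeping consequence of the three preceding lemmas. The only point requiring care is that the passage from the pointwise estimate to the minimax estimate needs every competing set $M$ to lie inside the domain $\widetilde{\Lambda}_b^T\times[a,b]$ where Lemma~\ref{Lemma 3}(iii) supplies \emph{uniform} bounds on $\partial\widetilde{\Lambda}_c/\partial c$; this is precisely why the reformulation of the minimax class provided by Lemma~\ref{Lemma 2} must be invoked before differentiating in $c$.
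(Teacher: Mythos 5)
Your proposal is correct and follows essentially the same route as the paper's proof: reduce to the restricted minimax class via Lemma \ref{Lemma 2}, then convert the uniform derivative bounds of Lemma \ref{Lemma 3}$(iii)$ into two-sided Lipschitz-type estimates for $\lambda_{c,k}$ (the paper uses the mean value theorem where you integrate, an immaterial difference).
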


\begin{proof}
	Let $[a,b] \subset (0,c^*)$ and let $T > \lambda_{a,k}$. Then
	\begin{equation} \label{39}
		\inf_{M \in \F_{b,T,k}}\, \sup_{u \in M}\, \widetilde{\Lambda}_c(u) = \lambda_{c,k} \quad \forall c \in [a,b]
	\end{equation}
	by Lemma \ref{Lemma 2}. For each $u \in \widetilde{\Lambda}_b^T$,
	\[
	\widetilde{\Lambda}_b(u) - \widetilde{\Lambda}_a(u) = \frac{\partial \widetilde{\Lambda}_c(u)}{\partial c}\, (b - a)
	\]
	for some $c \in (a,b)$ by the mean value theorem and hence
	\[
	- C\, (b - a) \le \widetilde{\Lambda}_b(u) - \widetilde{\Lambda}_a(u) \le - C^{-1}\, (b - a)
	\]
	by Lemma \ref{Lemma 3} $(iii)$. This together with \eqref{39} gives
	\[
	- C\, (b - a) \le \lambda_{b,k} - \lambda_{a,k} \le - C^{-1}\, (b - a),
	\]
	from which the desired conclusions follow.
\end{proof}

Now we study the behavior of $\lambda_{c,k}$ when $c\to 0^+$.
\begin{lemma}\label{lc0} If $r>q$, then 
	\begin{enumerate}
		\item[(i)] $\lim_{c\to 0^+}\sup_{u\in \mathcal{M}}t_c(u)=0$.
		\item[(ii)] $\lim_{c\to 0^+}\inf_{u\in \mathcal{M}}t_c(u)^{-s_r}c=\infty$.
	\end{enumerate}
\end{lemma}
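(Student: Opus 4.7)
The plan is to read off both statements from a single identity characterising $t_c(u)$. By Proposition~\ref{NEHARI}, the membership $u_{t_c(u)}\in\mathcal{N}_c$ is equivalent to $H(u_{t_c(u)})=s_r c$; combining this with the scalings $I(u_t)=t^{s_q}I(u)$ and $G(u_t)=t^{s_{2^*}}G(u)$, together with the normalisation $I(u)=1$ on $\mathcal{M}$, I obtain
\begin{equation}\label{planid}
(s_r - s_q)\,t_c(u)^{s_q} \;+\; (s_{2^*} - s_r)\,t_c(u)^{s_{2^*}}\,G(u) \;=\; s_r\, c.
\end{equation}
Since $r\in(q,2^*)$, both $s_r - s_q>0$ and $s_{2^*} - s_r>0$, so every summand on the left is nonnegative.

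For $(i)$, I would simply discard the second summand in \eqref{planid}, obtaining
$$
t_c(u)^{s_q}\;\le\;\frac{s_r c}{s_r - s_q}\qquad\text{for every } u\in\mathcal{M},
$$
which gives $\sup_{u\in\mathcal{M}}t_c(u)\le\bigl(s_r c/(s_r - s_q)\bigr)^{1/s_q}\to 0$ as $c\to 0^+$.

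For $(ii)$, I would divide \eqref{planid} by $t_c(u)^{s_r}$ and again drop the nonnegative $G$-term, giving
$$
\frac{s_r\, c}{t_c(u)^{s_r}}\;\ge\;(s_r - s_q)\,t_c(u)^{s_q - s_r}.
$$
Because $s_q - s_r<0$, the function $t\mapsto t^{s_q - s_r}$ is strictly decreasing, so the uniform bound on $\sup_u t_c(u)$ from $(i)$ turns into a uniform lower bound after passing through this decreasing map:
$$
\inf_{u\in\mathcal{M}}t_c(u)^{s_q - s_r}\;=\;\Bigl(\sup_{u\in\mathcal{M}}t_c(u)\Bigr)^{s_q - s_r}\;\ge\;\Bigl(\tfrac{s_r\, c}{s_r - s_q}\Bigr)^{(s_q - s_r)/s_q},
$$
which blows up as $c\to 0^+$ because the exponent $(s_q - s_r)/s_q$ is strictly negative. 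This yields $\inf_{u\in\mathcal{M}}t_c(u)^{-s_r}c\to\infty$, as required.

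The main obstacle is essentially non-existent: both conclusions follow by pure algebra from \eqref{planid}, and the only care needed is in sign-tracking---using that $r>q$ forces $s_r - s_q>0$ (so the first summand in \eqref{planid} furnishes a genuine upper bound on $t_c(u)^{s_q}$) and that the resulting $s_q - s_r<0$ reverses the monotonicity of $t\mapsto t^{s_q - s_r}$ used in step $(ii)$.
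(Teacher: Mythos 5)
Your proof is correct and follows essentially the same route as the paper: both arguments start from the identity $(s_r-s_q)t_c(u)^{s_q}+(s_{2^*}-s_r)t_c(u)^{s_{2^*}}G(u)=s_rc$ on $\mathcal{M}$, read off the uniform bound $t_c(u)\le(s_rc/(s_r-s_q))^{1/s_q}$ for $(i)$, and then divide by $t_c(u)^{s_r}$ and use $s_q-s_r<0$ to get $(ii)$. Your version of $(ii)$ is, if anything, slightly tighter, since dropping the nonnegative $G$-term gives an explicit quantitative lower bound without invoking boundedness of $G$ on $\mathcal{M}$.
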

\begin{proof} Suppose $r>q$. From $H(u_{t_c(u)})=s_rc$ we conclude that 
	\begin{equation}\label{101}
		(s_r-s_q)t_c(u)^{s_q}+(s_{2^*}-s_r)t_c(u)^{s_{2^*}}G(u)=s_rc, u\in \mathcal{M},
	\end{equation}
	hence
	\begin{equation*}
		t_c(u)\le \left[\frac{s_rc}{s_r-s_q}\right]^{\frac{1}{s_q}}, u\in \mathcal{M},
	\end{equation*}
	which implies $(i)$. To prove $(ii)$ note from \eqref{101} that
	\begin{equation*}
	(s_r-s_q)t_c(u)^{s_q-s_r}+(s_{2^*}-s_r)t_c(u)^{s_{2^*}-s_r}G(u)=s_rt_c(u)^{-s_r}c, u\in \mathcal{M},
	\end{equation*}
	therefore
	\begin{equation*}
		\lim_{c\to 0^+}t_c(u)^{-s_r}c=	\lim_{c\to 0^+}\frac{s_r-s_q}{s_r}t_c(u)^{s_q-s_r}=\infty,\ \mbox{uniformly}\ u\in \mathcal{M},
	\end{equation*}
	by item $(i)$.

\end{proof}
\begin{corollary} \label{cunif} If $r>q$, then $\lim_{c\to 0^+}\inf_{u\in \mathcal{M}}\widetilde{\Lambda}_c(u)=\infty$.
\end{corollary}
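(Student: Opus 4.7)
The strategy is to obtain a closed-form expression for $\widetilde{\Lambda}_c(u)$ on $\mathcal{M}$ in terms of $t_c(u)$, $G(u)$ and $F(u)$, from which a uniform blow-up follows from Lemma \ref{lc0}(i) together with the boundedness of $F$ and $G$ on $\mathcal{M}$.

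\textbf{Step 1 (formula for $\widetilde{\Lambda}_c$ on $\mathcal{M}$).} Fix $u\in\mathcal{M}$, so $I(u)=1$, and write $t_c=t_c(u)$. Then
\[
\widetilde{\Lambda}_c(u)=\varphi_{c,u}(t_c)=\frac{t_c^{s_q-s_r}-t_c^{s_{2^*}-s_r}G(u)-t_c^{-s_r}c}{F(u)}.
\]
The Nehari relation $H(u_{t_c})=s_rc$ (Proposition \ref{NEHARI}), combined with $I(u)=1$, reads
\[
(s_r-s_q)\,t_c^{s_q}+(s_{2^*}-s_r)\,t_c^{s_{2^*}}G(u)=s_r c,
\]
so $c=\tfrac{s_r-s_q}{s_r}t_c^{s_q}+\tfrac{s_{2^*}-s_r}{s_r}t_c^{s_{2^*}}G(u)$. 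Substituting this into the expression for $\widetilde{\Lambda}_c(u)$ and simplifying the coefficients, one obtains
\[
\widetilde{\Lambda}_c(u)=\frac{t_c^{s_q-s_r}}{s_r\,F(u)}\,\Bigl[\,s_q-s_{2^*}\,G(u)\,t_c^{s_{2^*}-s_q}\,\Bigr].
\]

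\textbf{Step 2 (uniform bounds on $\mathcal{M}$).} Since $I(u)=1$ gives $\int|\nabla u|^2\le 2$ and $\int|I_{\alpha/2}\star|u|^p|^2\le 2p$, the manifold $\mathcal{M}$ is bounded in $E_R^{\alpha,p}(\mathbb{R}^N)$. The continuous embedding $E_R^{\alpha,p}(\mathbb{R}^N)\hookrightarrow L^\ell(\mathbb{R}^N)$ for $\ell\in[q,2^*]$ then yields constants $C_F,C_G>0$ such that $F(u)\le C_F$ and $G(u)\le C_G$ for every $u\in\mathcal{M}$. By Lemma \ref{lc0}(i), $T(c):=\sup_{u\in\mathcal{M}}t_c(u)\to 0$ as $c\to 0^+$. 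Since $s_{2^*}-s_q>0$, it follows that $s_{2^*}G(u)t_c(u)^{s_{2^*}-s_q}\le s_{2^*}C_G\,T(c)^{s_{2^*}-s_q}\to 0$ uniformly on $\mathcal{M}$, so the bracket above satisfies
\[
s_q-s_{2^*}G(u)t_c(u)^{s_{2^*}-s_q}\ge \frac{s_q}{2}\qquad\forall u\in\mathcal{M},
\]
for all sufficiently small $c>0$.

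\textbf{Step 3 (conclusion).} Combining Steps 1 and 2, for $c>0$ small we have
\[
\widetilde{\Lambda}_c(u)\ge \frac{s_q\,t_c(u)^{s_q-s_r}}{2s_r\,F(u)}\ge \frac{s_q}{2s_r\,C_F}\,T(c)^{s_q-s_r}\qquad\forall u\in\mathcal{M},
\]
where the last inequality uses $t_c(u)\le T(c)$ and $s_q-s_r<0$. Since $T(c)\to 0^+$ and $s_q-s_r<0$, the right-hand side tends to $+\infty$ as $c\to 0^+$, which yields $\inf_{u\in\mathcal{M}}\widetilde{\Lambda}_c(u)\to\infty$.

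\textbf{Main obstacle.} The only nontrivial point is the uniformity as $c\to 0^+$: one needs $t_c(u)\to 0$ uniformly on $\mathcal{M}$, which is already delivered by Lemma \ref{lc0}(i), and the uniform boundedness of $F$ and $G$ on $\mathcal{M}$, which is free from the Sobolev-type embedding. Everything else is a bookkeeping consequence of the Nehari identity.
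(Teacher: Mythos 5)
Your proof is correct and takes essentially the same route as the paper: both use the Nehari identity \eqref{101} to obtain a closed-form expression for $\widetilde{\Lambda}_c(u)$ on $\mathcal{M}$ and then conclude from the boundedness of $F$ and $G$ over $\mathcal{M}$ together with Lemma \ref{lc0}. The only cosmetic difference is that you eliminate $c$ entirely, so you need only part $(i)$ of Lemma \ref{lc0}, whereas the paper keeps the term $t_c(u)^{-s_r}c$ and also invokes part $(ii)$.
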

\begin{proof} Indeed, by \eqref{101} we have that
	\begin{equation*}
		\widetilde{\Lambda}_c(u)=\frac{s_qt_c(u)^{-s_r}c-(s_{2^*}-s_q)t_c(u)^{s_{2^*}-s_r}G(u)}{(s_r-s_q)F(u)}, u\in \mathcal{M}.
	\end{equation*}
	Since $F(u),G(u)$ are bounded over $\mathcal{M}$ the proof follows by Lemma \ref{lc0}.
\end{proof}

Before the study of $\lim_{c\to 0^+}\lambda_{c,k}$, let us introduce a sequence of scaled eigenvalues to the problem 
\begin{equation}\label{1}
	I'(u)=\lambda F'(u).
\end{equation}
 If $r=q$, write

\begin{equation} \label{5}
	\lambda_k := \inf_{M \in \F_k}\, \sup_{u \in M}\, \widetilde{\Psi}(u),
\end{equation}
where $\widetilde{\Psi}(u)=1/F(u)$. The following theorem was proved in \cite{MePe2}.

\begin{theorem}[{\cite[Theorem 2.10]{MePe2}}] \label{Theorem 7}
$\lambda_k \nearrow \infty$ is a sequence of eigenvalues of problem \eqref{1}.
	\begin{enumroman}
		\item The first eigenvalue is given by
		\[
		\lambda_1 = \min_{u \in \M}\, \widetilde{\Psi}(u) > 0.
		\]
		\item If $\lambda_k = \dotsb = \lambda_{k+m-1} = \lambda$ and $E_\lambda$ is the set of eigenfunctions associated with $\lambda$ that lie on $\M$, then $i(E_\lambda) \ge m$.
		\item If $\lambda_k < \lambda < \lambda_{k+1}$, then
		\[
		i(\widetilde{\Psi}^{\lambda_k}) = i(\M \setminus \widetilde{\Psi}_\lambda) = i(\widetilde{\Psi}^\lambda) = i(\M \setminus \widetilde{\Psi}_{\lambda_{k+1}}) = k,
		\]
		where $\widetilde{\Psi}^a = \bgset{u \in \M : \widetilde{\Psi}(u) \le a}$ and $\widetilde{\Psi}_a = \bgset{u \in \M : \widetilde{\Psi}(u) \ge a}$ for $a \in \R$.
	\end{enumroman}
\end{theorem}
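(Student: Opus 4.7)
The plan is to apply the $\Z_2$-cohomological index / Lusternik--Schnirelman scheme on the $C^1$-Finsler manifold $\M$ to the even, $C^1$ functional $\widetilde{\Psi}(u) = 1/F(u)$, essentially as in \cite{MR2640827}. The preparatory steps are: $\widetilde{\Psi}$ is well defined, $C^1$, and even on $\M$ (immediate, since $u \in \M$ forces $u \ne 0$ and thus $F(u) > 0$); it is bounded below by a positive constant, because $F$ is bounded above on $\M$ via the continuous embedding $E_R^{\alpha,p}(\R^N) \hookrightarrow L^r(\R^N)$; and $F$ is weakly sequentially continuous on the bounded manifold $\M$ through the compact radial embedding into $L^r(\R^N)$, valid under $(H_1)$, which at the endpoint $r = q$ is delivered by the interpolation trick already used in Lemma \ref{lbound}(ii) and requires $\alpha > 1$.

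Next I would verify the \PS{} condition for $\widetilde{\Psi}|_\M$. Given $\seq{u_n} \subset \M$ with $\widetilde{\Psi}(u_n)$ bounded and $(\widetilde{\Psi}|_\M)'(u_n) \to 0$, the Lagrange multiplier formulation yields $\mu_n \in \R$ with $F'(u_n) + \mu_n\, I'(u_n) = o(1)$; boundedness of $\seq{u_n}$ in $E_R^{\alpha,p}(\R^N)$ combined with the compact radial embedding gives $u_n \wto u$ and $F'(u_n) \to F'(u)$ strongly, while testing against $u_n$ (using that $F(u_n)$ stays away from zero by the bound on $\widetilde{\Psi}(u_n)$) bounds $\mu_n$. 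The semigroup/Young argument at the end of the \PS{} proof for $\Lambda_c$, combined with the uniform convexity of $E_R^{\alpha,p}(\R^N)$, then upgrades $u_n \wto u$ to $u_n \to u$. With weak sequential continuity of $F$ and \PS{} in hand, the symmetric deformation lemma on the Finsler manifold $\M$ shows each $\lambda_k$ is a critical value of $\widetilde{\Psi}|_\M$, and hence an eigenvalue of \eqref{1} via the standard identification of the Lagrange multiplier; moreover, every sublevel set $\widetilde{\Psi}^T$ is weakly compact and therefore of finite $\Z_2$-index, which forces $\lambda_k \to \infty$.

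Assertion $(i)$ is immediate: antipodal pairs $\set{u,-u} \subset \M$ have index $1$, so $\lambda_1 = \inf_\M \widetilde{\Psi}$, attained by weak lower semicontinuity and strictly positive by the upper bound on $F$. For $(ii)$, the set $E_\lambda$ is symmetric and compact (by \PS{}); a small symmetric tubular neighborhood together with the continuity and sub-additivity of $i$, applied in the classical index-jump argument, yields $i(E_\lambda) \ge m$. For $(iii)$, on the interval $(\lambda_k, \lambda_{k+1})$ every value is regular, so the negative gradient flow of $\widetilde{\Psi}$ provides $\Z_2$-equivariant strong deformation retractions between $\widetilde{\Psi}^{\lambda_k}$ and $\widetilde{\Psi}^\lambda$, and between $\M\setminus\widetilde{\Psi}_\lambda$ and $\M\setminus\widetilde{\Psi}_{\lambda_{k+1}}$, forcing equality of the four indices; the common value is exactly $k$ because $\lambda_k$ is the $k$-th min-max value and $\lambda_{k+1}$ is the next. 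The main obstacle I expect is the compactness of the radial embedding at the endpoint $r = q$, which is precisely why $(H_1)$ imposes $\alpha > 1$; with this hypothesis in force, the remainder is the classical Krasnoselskii / Fadell--Rabinowitz machinery adapted to the scaled eigenvalue problem \eqref{1}.
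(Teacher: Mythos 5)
The paper does not prove this statement at all: it is imported verbatim from \cite[Theorem 2.10]{MePe2}, so your reconstruction is necessarily a different route, and its skeleton (index-theoretic min--max on the Finsler manifold $\M$, \PS{} via the compact radial embeddings with the $\alpha>1$ interpolation at $r=q$, the standard multiplicity and index-of-sublevel-set arguments of \cite{MR2640827}) is indeed the right one. However, there is a genuine gap exactly at the point where this problem ceases to be classical. You claim that a critical point of $\restr{\widetilde{\Psi}}{\M}$ becomes an eigenfunction of \eqref{1} ``via the standard identification of the Lagrange multiplier,'' e.g.\ by testing against $u_n$. The Lagrange rule only gives $I'(u)=\widetilde{\lambda}\,F'(u)$ for \emph{some} $\widetilde{\lambda}$, and since $I$ is the sum of a $2$-homogeneous and a $2p$-homogeneous term (it is not homogeneous), testing with $u$ yields
\[
\widetilde{\lambda}=\frac{\int|\nabla u|^2+\int |I_{\alpha/2}\star |u|^p|^2}{q\,F(u)},
\]
which is \emph{not} $\widetilde{\Psi}(u)=1/F(u)$ in general. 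The theorem asserts that the numbers $\lambda_k$ defined in \eqref{5} are themselves eigenvalues (and, in item $(i)$, that no eigenvalue lies below $\lambda_1$), and for that one must test with the scaling direction, i.e.\ use $\frac{d}{dt}I(u_t)\big|_{t=1}=s_q I(u)$, $\frac{d}{dt}F(u_t)\big|_{t=1}=s_r F(u)$ together with $s_r=s_q$ precisely when $r=q$ --- equivalently a Pohozaev-type identity for solutions of \eqref{1}, as in \cite[Proposition 5.5]{MR3568051}, which needs a regularity argument and is not part of the classical Krasnoselskii/Fadell--Rabinowitz package. This scaling identification is the whole point of the ``scaled'' framework of \cite{MePe2} that the paper invokes, and your sketch never supplies it.

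A secondary flaw: you argue $\lambda_k\to\infty$ from ``every sublevel set $\widetilde{\Psi}^T$ is weakly compact and therefore of finite $\Z_2$-index.'' Neither half is sound: $\M=\set{I=1}$ is not weakly closed (the functional $I$ is only weakly lower semicontinuous), so $\widetilde{\Psi}^T$ need not be weakly compact, and weak compactness of a symmetric set bounded away from $0$ does not by itself bound its index. The correct route is either to show $i\bigl(\set{u\in\M: F(u)\ge\delta}\bigr)<\infty$ using the compactness of the embedding together with the codimension (odd-map) property of the index, or to argue as in \cite[Propositions 3.52--3.53]{MR2640827} via \PS{} and compactness of the critical sets. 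With the Pohozaev/scaling identification supplied and this step repaired, the rest of your outline (items $(i)$--$(iii)$) does go through along the lines you indicate.
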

Now we have the tools to analyze $\lim_{c\to 0^+}\lambda_{c,k}$:
\begin{proposition} \label{Proposition 2} For each $k\in \mathbb{N}$ we have
	\begin{enumerate}
		\item[(i)] If $r>q$, then $\lim_{c\to 0^+}\lambda_{c,k}=\infty$.
		\item[(ii)] 	If $r=q$, then 
		\begin{equation*}
			\lim_{c\to 0^+}\lambda_{c,k}=\lambda_k, \forall k\in \mathbb{N},
		\end{equation*}
	\end{enumerate} 
\end{proposition}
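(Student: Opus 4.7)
Part $(i)$ is immediate from Corollary~\ref{cunif}: since $\sup_{u \in M} \widetilde{\Lambda}_c(u) \ge \inf_{u \in \mathcal{M}} \widetilde{\Lambda}_c(u)$ for every nonempty symmetric $M \subset \mathcal{M}$, taking the infimum over $M \in \F_k$ in \eqref{31} gives $\lambda_{c,k} \ge \inf_{u \in \mathcal{M}} \widetilde{\Lambda}_c(u) \to \infty$ as $c \to 0^+$.

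For part $(ii)$, the plan is to derive an explicit algebraic identity comparing $\widetilde{\Lambda}_c$ with $\widetilde{\Psi}$ on $\mathcal{M}$. Since $r = q$ forces $s_r = s_q$, the defining equation $H(u_{t_c(u)}) = s_r c$ collapses to $(s_{2^*} - s_q)\, t_c(u)^{s_{2^*}} G(u) = s_q c$, which solves explicitly for $t_c(u)^{s_{2^*}}$ in terms of $G(u)$. Substituting this and $I(u) = 1$ (for $u \in \mathcal{M}$) into
$\widetilde{\Lambda}_c(u) = [t_c(u)^{s_q} I(u) - t_c(u)^{s_{2^*}} G(u) - c]/[t_c(u)^{s_q} F(u)]$, a short simplification yields
\[
\widetilde{\Lambda}_c(u) = \widetilde{\Psi}(u) - K\, c^{\beta}\, \frac{G(u)^{\gamma}}{F(u)}, \qquad u \in \mathcal{M},
\]
with $\beta = (s_{2^*}-s_q)/s_{2^*} > 0$, $\gamma = s_q/s_{2^*} \in (0,1)$, and $K > 0$ a constant depending only on $s_q,\, s_{2^*}$.

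Next, $I(u) = 1$ bounds $\norm{\nabla u}_2$ on $\mathcal{M}$, so by the Sobolev inequality $G(u) \le C_G$ uniformly over $\mathcal{M}$. The subtracted term is therefore at most $K'\, c^\beta\, \widetilde{\Psi}(u)$ with $K' = K\, C_G^\gamma$, giving the uniform sandwich
\[
(1 - K' c^\beta)\, \widetilde{\Psi}(u) \le \widetilde{\Lambda}_c(u) \le \widetilde{\Psi}(u), \qquad u \in \mathcal{M}.
\]
For $c > 0$ small enough that $K' c^\beta < 1$, taking $\sup_{u \in M}$ and then $\inf_{M \in \F_k}$ in both inequalities and invoking \eqref{31} and \eqref{5} yields $(1 - K' c^\beta)\, \lambda_k \le \lambda_{c,k} \le \lambda_k$, so letting $c \to 0^+$ gives $\lambda_{c,k} \to \lambda_k$.

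The main obstacle is carrying out the algebraic simplification to the displayed identity; the key observation that makes the rest routine is that the potentially unbounded factor $1/F(u) = \widetilde{\Psi}(u)$ appears \emph{as a factor} of the error term, so the error is automatically controlled by $\widetilde{\Psi}$ itself. This avoids any need for a uniform lower bound on $F$ over $\mathcal{M}$ (which fails in general) and lets the squeeze be applied inside the $\inf$-$\sup$.
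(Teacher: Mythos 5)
Your proof is correct. Part $(i)$ is exactly the paper's argument: $\lambda_{c,k}\ge\inf_{u\in\M}\widetilde{\Lambda}_c(u)$ together with Corollary \ref{cunif}. For part $(ii)$, however, you take a genuinely different route: the paper gives no argument at all, simply deferring to \cite[Proposition 2.18 (ii)]{PeSi}, whereas you produce a self-contained proof. Your key identity checks out: with $r=q$ one has $s_r=s_q$, the constraint $H(u_{t_c(u)})=s_qc$ reduces to $(s_{2^*}-s_q)\,t_c(u)^{s_{2^*}}G(u)=s_qc$ (note $G(u)>0$ for $u\ne 0$, so $t_c(u)$ is explicit), and substituting into $\widetilde{\Lambda}_c(u)=\varphi_{c,u}(t_c(u))$ with $I(u)=1$ indeed yields $\widetilde{\Lambda}_c(u)=\widetilde{\Psi}(u)-K\,c^{\beta}G(u)^{\gamma}/F(u)$ with $\beta=(s_{2^*}-s_q)/s_{2^*}$, $\gamma=s_q/s_{2^*}$, $K=\frac{s_{2^*}}{s_{2^*}-s_q}\bigl(\frac{s_{2^*}-s_q}{s_q}\bigr)^{s_q/s_{2^*}}>0$ (all positive since $s_{2^*}>s_q$). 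The uniform bound $G\le C_G$ on $\M$ follows from $I(u)=1$ and Sobolev, the error term is then dominated by $K'c^{\beta}\widetilde{\Psi}(u)$ with the factor $1/F(u)$ absorbed into $\widetilde{\Psi}$ exactly as you say, and since the sandwich $(1-K'c^{\beta})\widetilde{\Psi}\le\widetilde{\Lambda}_c\le\widetilde{\Psi}$ holds pointwise on $\M$ with a constant independent of $u$, it passes through the $\inf$-$\sup$ in \eqref{31} and \eqref{5}, giving $(1-K'c^{\beta})\lambda_k\le\lambda_{c,k}\le\lambda_k$ for small $c$; finiteness of $\lambda_k$ (Theorem \ref{Theorem 7}) closes the squeeze. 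What your approach buys over the paper's citation is transparency and a quantitative rate, $0\le\lambda_k-\lambda_{c,k}\le K'c^{\beta}\lambda_k$; what it does not need is any lower bound on $F$ over $\M$ or the hypothesis $\alpha>1$, which enter elsewhere (coercivity and the Palais--Smale condition) but not here.
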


\begin{proof} $(i)$ In fact, since $\lambda_{c,k}\ge \inf_{u\in \mathcal{M}_s}\widetilde{\Lambda}_c(u)$, the proof follows from Corollary \ref{cunif}. $(ii)$ The proof is similar to that of \cite[Proposition 2.18 item $(ii)$]{PeSi}.

\end{proof}
Now we study we behavior of $\lambda_{c,k}$ when $c\to (c^*)^-$. 
\begin{proposition}\label{Proposition 4} For each $k\in \mathbb{N}$, there exists $\tilde{\lambda}_k\ge 0$ such that 
	\begin{enumerate}
		\item[(i)] 	$\lim_{c\to (c^*)^-}\lambda_{c,k}=\tilde{\lambda}_{k}$.
		\item[(ii)] $\tilde{\lambda}_k\le \tilde{\lambda}_{k+1}$, $k\in \mathbb{N}$.
		\item[(iii)] $\tilde{\lambda}_1=0$ in the following cases:
			\begin{equation*}
			\begin{cases}
			\frac{2Np}{N+\alpha} > \frac{N}{N-2}, N+\alpha-(N-2)(p+1)\ge 0, (N-2)r-4>0,\\
			\frac{2Np}{N+\alpha} > \frac{N}{N-2}, N+\alpha-(N-2)(p+1)< 0, 2\alpha+(N-2)(r-2p)>0, \\
			\frac{2Np}{N+\alpha} = \frac{N}{N-2},4+\alpha-N>0, (N-2)r-4>0, \\
			\frac{2Np}{N+\alpha} = \frac{N}{N-2}, 4+\alpha-N\le 0, \alpha-N+(N-2)r>0, \\
			\frac{2Np}{N+\alpha} < \frac{N}{N-2}, (N-2)r-4>0. \\
		\end{cases}
		\end{equation*}
	\end{enumerate}

\end{proposition}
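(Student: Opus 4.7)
My plan is to handle the three items in order, using the continuous strict monotonicity of $c\mapsto\lambda_{c,k}$ established in Proposition~\ref{Proposition 1} together with the Talenti-function estimates of Proposition~\ref{lbb1}.

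For item $(i)$, I would invoke Proposition~\ref{Proposition 1} to get that $c\mapsto\lambda_{c,k}$ is monotone decreasing on $(0,c^*)$, and combine it with the bound $\lambda_{c,k}\ge\inf_{u\in\N_c}\Lambda_c(u)\ge 0$ coming from Lemma~\ref{ineinfipositive} (also recorded in Lemma~\ref{lbb}). A bounded monotone real-valued function has a limit at its right endpoint, so $\tilde{\lambda}_k:=\lim_{c\to(c^*)^-}\lambda_{c,k}$ exists in $[0,\infty)$. Item $(ii)$ follows from the monotonicity of the $\Z_2$-cohomological index: since $\F_{c,k+1}\subset\F_{c,k}$, the minimax formula \eqref{50} yields $\lambda_{c,k}\le\lambda_{c,k+1}$ on $(0,c^*)$, and the inequality is preserved by the limit.

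For item $(iii)$, the idea is to feed the Talenti functions into the minimax level as test functions. First I would note that $\lambda_{c,1}=\inf_{u\in\M}\widetilde{\Lambda}_c(u)$, since the smallest symmetric sets of $\Z_2$-index $\ge 1$ are antipodal pairs $\{\pm u\}$ and $\widetilde{\Lambda}_c$ is even. For the function $v_\varepsilon$ from Lemma~\ref{ltalenti}, let $\widetilde{v}_\varepsilon:=\pi(v_\varepsilon)\in\M$ be the projection from the proof of Proposition~\ref{NEHARI}; the scale invariance $t_c(u_t)=t_c(u)/t$ gives $\widetilde{\Lambda}_c(\widetilde{v}_\varepsilon)=\Lambda_c((v_\varepsilon)_{t_c(v_\varepsilon)})$. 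Next I would observe that the implicit function argument preceding Lemma~\ref{Lemma 1} in fact works on all of $(0,\infty)$, because $\partial\widetilde{H}/\partial t$ is a sum of two strictly positive terms (using $s_q,\,s_r-s_q,\,s_{2^*}-s_r>0$); in particular $c\mapsto\widetilde{\Lambda}_c(\widetilde{v}_\varepsilon)$ extends $C^1$ across $c=c^*$. Letting $c\to(c^*)^-$ in the upper bound $\lambda_{c,1}\le\widetilde{\Lambda}_c(\widetilde{v}_\varepsilon)$ yields $\tilde{\lambda}_1\le\Lambda_{c^*}((v_\varepsilon)_{t_{c^*}(v_\varepsilon)})$ for every $\varepsilon>0$; the Brezis--Nirenberg-type estimate of Proposition~\ref{lbb1}, valid in each of the five listed regimes, then drives the right-hand side to $0$ as $\varepsilon\to 0$. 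Together with $\tilde{\lambda}_1\ge 0$ from item $(i)$, this gives $\tilde{\lambda}_1=0$.

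The main obstacle is the iterated limit in $(iii)$: one must send $c\to c^*$ for each fixed $\varepsilon$ \emph{before} sending $\varepsilon\to 0$, because Proposition~\ref{lbb1} only records vanishing behavior at the single value $c=c^*$, not uniformly in $c$. The enabling technical point is the continuity of $c\mapsto\widetilde{\Lambda}_c(\widetilde{v}_\varepsilon)$ at the right endpoint, which is not part of Proposition~\ref{Proposition 1} but follows from the implicit function extension just described. Once this is in hand, the remainder of the proof is a transparent combination of index monotonicity and the estimates already packaged in Proposition~\ref{lbb1}.
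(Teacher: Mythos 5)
Your proof is correct, and items $(i)$--$(ii)$ coincide with the paper's own argument (monotonicity of $c\mapsto\lambda_{c,k}$ from Lemma \ref{Lemma 1}/Proposition \ref{Proposition 1} plus nonnegativity from Lemma \ref{lbb}, and index monotonicity passed to the limit for $(ii)$). For $(iii)$ you take a genuinely different, though closely related, route: the paper never sends $c\to (c^*)^-$ with $\varepsilon$ fixed, but instead couples the two limits through the diagonal choice $c(\varepsilon)=c^*-\varepsilon F(v_\varepsilon)<c^*$, so that the deficit term $t_c(v_\varepsilon)^{-s_r}(c^*-c)/F(v_\varepsilon)$ becomes $t_c(v_\varepsilon)^{-s_r}\varepsilon$, and then reruns the Talenti estimates (Lemmas \ref{ineinfipositive} and \ref{la1}) to get $\lambda_{c(\varepsilon),1}\le\varphi_{c(\varepsilon),v_\varepsilon}(t_{c(\varepsilon)}(v_\varepsilon))\to 0$; in this way only objects with $c<c^*$ are ever used. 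You instead decouple the limits by extending $c\mapsto t_c(\widetilde v_\varepsilon)$, hence $c\mapsto\widetilde\Lambda_c(\widetilde v_\varepsilon)$, continuously up to $c=c^*$ and then quoting the $\varepsilon\to 0$ limit from Proposition \ref{lbb1}. That extension is legitimate, since $\N_c$ and $t_c$ are defined for every $c>0$ by Proposition \ref{NEHARI} and $t\mapsto H(u_t)$ is strictly increasing whenever $G(u)>0$; so your version buys a cleaner reuse of Proposition \ref{lbb1} at the price of this extra continuity step, which the paper's diagonal trick avoids. Two small points to tighten: when $r=q$ one has $s_r-s_q=0$, so the positivity of $\partial\widetilde H/\partial t$ rests on $G(\widetilde v_\varepsilon)>0$ (which holds) rather than on ``two strictly positive terms''; and what you actually need is the limit $\Lambda_{c^*}\bigl((v_\varepsilon)_{t_{c^*}(v_\varepsilon)}\bigr)\to 0$ established in the \emph{proof} of Proposition \ref{lbb1}, not merely its statement $\inf_{u\in\N_{c^*}}\Lambda_{c^*}(u)=0$ --- unless you apply your continuity step to every $u\in\M$ to deduce $\tilde\lambda_1\le\inf_{u\in\N_{c^*}}\Lambda_{c^*}(u)$, which also works.
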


\begin{proof} $(i)$ It follows from Lemma \ref{Lemma 1}. 
	
	$(ii)$ Since $\lambda_{c,k}\le \lambda_{c,k+1}$ for all $c>0$ and $k\in \mathbb{N}$ it follows that  $\tilde{\lambda}_k\le \tilde{\lambda}_{k+1}$, $k\in \mathbb{N}$.
	
	$(iii)$	Arguing as in the proof of Lemma \ref{lbb} we have that 
\begin{equation*}
	\varphi_{c,v_\varepsilon}(t_{c}(v_{\varepsilon}))\le    t_{c}(v_{\varepsilon})^{s_q-s_r}\frac{\frac{O(\varepsilon^{N-2})}{2}+\frac{1}{2p}\int |I_{\alpha/2}\star |v_{\varepsilon}|^p|^2}{F(v_\varepsilon)}+ t_{c}(v_{\varepsilon})^{-s_r}\frac{c^*-c}{F(v_\varepsilon)}, \varepsilon>0. 
\end{equation*}
	By taking $c=c(\varepsilon):=c^*-\varepsilon F(v_\varepsilon)$ we conclude that 
	\begin{equation*}
			\varphi_{{c(\varepsilon)},v_\varepsilon}(t_{c(\varepsilon)}(v_\varepsilon))\le    t_{c}(v_{\varepsilon})^{s_q-s_r }\frac{\frac{O(\varepsilon^{N-2})}{2}+\frac{1}{2p}\int |I_{\alpha/2}\star |v_{\varepsilon}|^p|^2}{F(v_\varepsilon)}+ t_{c}(v_{\varepsilon})^{-s_r}\varepsilon, \varepsilon>0. 
	\end{equation*}
	Again, similar to the proof of Lemma \ref{lbb} we have that $ t_{c(\varepsilon)}(v_\varepsilon)$ is away from zero and hence, by using Lemmas \ref{ineinfipositive} and \ref{la1}, we conclude that
	\begin{equation*}
		0\le 	\lim_{\varepsilon\to 0}	\Lambda_{c(\varepsilon)}(t_{c(\varepsilon)}(v_\varepsilon)v_\varepsilon)=\lim_{\varepsilon\to 0}	\varphi_{{c(\varepsilon)},v_\varepsilon}(t_{c(\varepsilon)}(v_\varepsilon))=0
	\end{equation*}
	which completes the proof.
	
\end{proof}

\section{Proof of Theorem \ref{thm1}}

Define $L_{\lambda}:=\{(\lambda,c): c\in (0,c^*)\}$ and for $k\in \mathbb{N}$ let $C_{k}=\{(\lambda_{c,k},c): c\in (0,c^*)\}$. Recall that $\tilde{\lambda}_k$ was defined in Proposition \ref{Proposition 4}. First, combining the results of the last section, we have the following 
\begin{theorem} \label{thm2} If $r>q$, then 
	\begin{enumerate}
		\item[(i)] For each $k\in \mathbb{N}$, the curves $(0,c^*)\ni c\mapsto \lambda_{c,k}$ are continuous and decreasing.
		\item[(ii)] For each $k\in \mathbb{N}$ we have that $\lim_{c\to 0^+}=\infty$.
		\item[(iii)] For each $k\in \mathbb{N}$ we have that $\lim_{c\to (c^*)^-}\lambda _{c,k}=\tilde{\lambda}_k$.
		\item[(iv)] For each $k\in \mathbb{N}$ and $\lambda>\tilde{\lambda}_k$, the set $L_{\lambda}\cap C_{k}$ has at least $k$ points. 
	\end{enumerate}
	
\end{theorem}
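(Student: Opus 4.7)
The proof splits naturally along the four items, and parts (i)--(iii) require essentially no new work. I would simply observe that (i) is Proposition~\ref{Proposition 1}, that (ii) follows from the obvious estimate $\lambda_{c,k} \ge \inf_{u\in\M}\widetilde{\Lambda}_c(u)$ combined with Proposition~\ref{Proposition 2}$(i)$ (which itself rests on Corollary~\ref{cunif}), and that (iii) is exactly Proposition~\ref{Proposition 4}$(i)$. So after (i)--(iii) all the qualitative information about each curve $C_j$ is in place: continuous, strictly decreasing on $(0,c^*)$, with left-end limit $+\infty$ and right-end limit $\tilde{\lambda}_j$.

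The content lies in (iv), where the plan is to apply the intermediate value theorem to the first $k$ curves $C_1,\dots,C_k$ separately. Fix $\lambda>\tilde{\lambda}_k$. Since $\tilde{\lambda}_j$ is nondecreasing in $j$ by Proposition~\ref{Proposition 4}$(ii)$, we have $\tilde{\lambda}_j\le\tilde{\lambda}_k<\lambda$ for every $j\in\{1,\dots,k\}$. By parts (i)--(iii), each $c\mapsto\lambda_{c,j}$ is a continuous strict bijection from $(0,c^*)$ onto $(\tilde{\lambda}_j,+\infty)$, hence there is a unique $c_j\in(0,c^*)$ with $\lambda_{c_j,j}=\lambda$. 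This gives $k$ candidate points $(\lambda,c_j)$ lying on $C_1,\dots,C_k$ (which is how I read the statement: intersections with the union $\bigcup_{j\le k}C_j$, each yielding a distinct solution of \eqref{103}).

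The main obstacle I anticipate is handling potential coincidences $c_{j_1}=c_{j_2}$ with $j_1<j_2$, since a single curve $C_k$ can a priori meet $L_\lambda$ in at most one point. In the generic case where all the $c_j$ are pairwise distinct, the $k$ pairs $(\lambda,c_j)$ translate via Proposition~\ref{Proposition 7} into $k$ solutions of \eqref{103} of pairwise different $\Phi_\lambda$-energies $c_j$, and we are done. If instead a run of indices collapses, $c_{j_1}=c_{j_1+1}=\cdots=c_{j_2}=c$, then $\lambda_{c,j_1}=\cdots=\lambda_{c,j_2}=\lambda$, so the standard $\Z_2$-cohomological index argument underlying Proposition~\ref{Proposition 3} (directly mirroring Theorem~\ref{Theorem 7}$(ii)$) forces the critical set of $\Lambda_c$ at level $\lambda$ to have $\Z_2$-index at least $j_2-j_1+1$ and thus to be an infinite symmetric set. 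Packaging these together with the solutions arising from the remaining distinct $c_j$'s still yields at least $k$ distinct solutions, which proves (iv) and completes the theorem.
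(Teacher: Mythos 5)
Your proposal is correct and follows essentially the same route as the paper: items (i)--(iii) are cited from Propositions \ref{Proposition 1}, \ref{Proposition 2} and \ref{Proposition 4} exactly as the paper does, and (iv) is the intermediate-value argument that the paper compresses into ``clear from $(i)$--$(iii)$''. Your extra discussion of possible coincidences $c_{j_1}=c_{j_2}$ (handled via the $\Z_2$-index multiplicity behind Proposition \ref{Proposition 3}) goes beyond the paper's terse proof and in fact patches the only point where the literal statement of (iv), read as counting distinct points of $L_\lambda\cap\bigcup_{j\le k}C_j$, needs care.
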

\begin{proof} $(i)$ follows from Proposition \ref{Proposition 1}, $(ii)$ follows from Proposition \ref{Proposition 2} and $(iii)$ from Proposition \ref{Proposition 4}. $(iv)$ is clear from $(i)-(iii)$.
\end{proof}
\begin{figure}[h!]
	\centering
	\begin{tikzpicture}[>=latex]
		\draw[->] (-1,0) -- (6,0) node[below] {\scalebox{0.8}{$\lambda$}};
		\foreach \x in {}
		\draw[shift={(\x,0)}] (0pt,2pt) -- (0pt,-2pt) node[below] {\footnotesize $\x$};
		\draw[->] (0,-1) -- (0,3) node[left] {\scalebox{0.8}{$c$}};
		\foreach \y in {}
		\draw[shift={(0,\y)}] (2pt,0pt) -- (-2pt,0pt) nde[left] {\footnotesize $\y$};
		\draw[blue,thick] (1.55,2.5) .. controls (2,.2) and (3,0.2) .. (6,.2);
		\draw [thick] (1.5,-.1) node[below]{\scalebox{0.8}{$\tilde{\lambda}_{1}$}} -- (1.5,0.05); 
		\draw [dashed] (1.5,0) -- (1.5,2.5);
		\draw [thick] (2.3,-.1) node[below]{\scalebox{0.8}{$\tilde{\lambda}_{2}$}} -- (2.3,0.05); 
		\draw [dashed] (2.3,0) -- (2.3,2.5);
		\draw [thick] (3.3,-.1) node[below]{\scalebox{0.8}{$\tilde{\lambda}_{k}$}} -- (3.3,0.05); 
		\draw [dashed] (3.3,0) -- (3.3,2.5);
		
		\draw [thick] (4,.5) node[above]{$\cdots$}; 
		\draw[blue,thick] (2.35,2.5) .. controls (2.8,.4) and (3.8,0.4) .. (6,.4);
		\draw[blue,thick] (3.35,2.5) .. controls (3.8,.6) and (4.8,0.6) .. (6,.6);
		
		\draw [thick] (5,0) -- (5,2.5);
		\draw (5,.64) node{\scalebox{0.8}{$\bullet$}};
		\draw (5,.4) node{\scalebox{0.8}{$\bullet$}};
		\draw (5,.2) node{\scalebox{0.8}{$\bullet$}};
		\draw  (1,1) node[below]{\scalebox{1.5}{$\nexists$}}  ; 	
		\draw  (1.5,2.5) node[above]{\scalebox{0.8}{$\lambda_{c,1}$}} ; 	
		\draw  (2.3,2.5) node[above]{\scalebox{0.8}{$\lambda_{c,2}$}} ; 
		
		\draw  (3.3,2.5) node[above]{\scalebox{0.8}{$\lambda_{c,k}$}} ; 
		
		\draw  (5,0) node[below]{\scalebox{0.8}{$L_\lambda$}} ;
		\draw [dashed] (0,2.5) -- (6,2.5);
		\draw  (0,2.5) node[left]{\scalebox{0.8}{$c^*$}} ;
	\end{tikzpicture}
	\caption{Energy curves Theorem \ref{thm2}} \label{fig}
\end{figure}
\begin{theorem} \label{thm3} If $r=q$ and $\alpha>1$, then 
	\begin{enumerate}
		\item[(i)] For each $k\in \mathbb{N}$, the curves $(0,c^*)\ni c\mapsto \lambda_{c,k}$ are continuous and decreasing.
		\item[(ii)] For each $k\in \mathbb{N}$ we have that $\lim_{c\to 0^+}=\lambda_k$.
		\item[(iii)] For each $k\in \mathbb{N}$ we have that $\lim_{c\to (c^*)^-}\lambda _{c,k}=\tilde{\lambda}_k$.
	\end{enumerate}
	
\end{theorem}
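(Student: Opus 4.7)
The three assertions of Theorem \ref{thm3} are the exact specializations to the case $r = q$, $\alpha > 1$ of results already established in Section 2.3, so the proof amounts to invoking the correct references and checking that none of them implicitly assumed $r > q$.

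For $(i)$, I would apply Proposition \ref{Proposition 1} verbatim: it asserts that $c \mapsto \lambda_{c,k}$ is continuous and strictly decreasing on $(0, c^*)$ for every $k \ge 1$. Its proof chains through Lemma \ref{Lemma 2} (localization to a sublevel set), Lemma \ref{Lemma 3} (uniform bounds on $F(u)$, on the fiber parameter $t_c(u)$, and hence on $\partial_c \widetilde{\Lambda}_c(u)$), Corollary \ref{cinfinity} (used to derive Lemma \ref{Lemma 3}$(i)$), and the Palais--Smale bookkeeping from Lemma \ref{lbound}. Each of these auxiliary results was proved under the standing hypothesis $(H_1)$, which explicitly includes $r = q$ with $\alpha > 1$; in particular, Lemma \ref{lbound}$(ii)$ supplies the coercivity of $\Lambda_c$ in this regime, and Corollary \ref{cinfinity} has a separate clause for $r = q$. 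Hence $(i)$ requires no new argument.

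For $(ii)$, the statement is literally Proposition \ref{Proposition 2}$(ii)$, so I would simply cite it; the identification with the eigenvalues $\lambda_k$ of Theorem \ref{Theorem 7} comes via the observation that when $c \to 0^+$ the fiber maximizer $t_c(u)$ remains in a regime where $\varphi_{c,u}(t_c(u))$ collapses to $\widetilde{\Psi}(u) = 1/F(u)$ (up to a positive multiplicative constant coming from the $I$-term, normalized on $\M$), and then the min-max levels $\lambda_{c,k}$ defined by \eqref{31} pass to the corresponding levels \eqref{5}. For $(iii)$, I would invoke Proposition \ref{Proposition 4}$(i)$, whose existence of the limit $\tilde{\lambda}_k$ is a consequence of the monotonicity established in Lemma \ref{Lemma 1}$(ii)$; this monotonicity, in turn, rests only on the sign of $\partial_c \widetilde{\Lambda}_c(u)$ in \eqref{33}, which is valid regardless of whether $r > q$ or $r = q$.

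The only step that deserves scrutiny is the Palais--Smale based machinery underlying Proposition \ref{Proposition 1}, because the case $r = q$ is genuinely more delicate (the standard boundedness estimates on $\mathcal{N}_c$ degenerate as $F(u) \to 0$). However, this is exactly where the hypothesis $\alpha > 1$ enters: in the proof of Lemma \ref{lbound}$(ii)$, the interpolation $\|\widetilde{u}_n\|_q \le \|\widetilde{u}_n\|_{q_1}^{1-\theta}\|\widetilde{u}_n\|_{2^*}^\theta$ with $q_1 > q_0$ forces $F(\widetilde{u}_n) \to 0$ and yields coercivity. Once coercivity is granted, the Lemma \ref{Lemma 3} estimates propagate uniformly on any $[a,b] \subset (0,c^*)$, and the mean-value argument of Proposition \ref{Proposition 1} gives the two-sided Lipschitz bound on $\lambda_{c,k}$, establishing both continuity and strict monotonicity. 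No further work is needed, so the theorem follows by assembling the three cited results.
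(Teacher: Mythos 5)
Your proposal matches the paper's own proof, which for Theorem \ref{thm3} simply cites Proposition \ref{Proposition 1} for $(i)$, Proposition \ref{Proposition 2} for $(ii)$, and Proposition \ref{Proposition 4} for $(iii)$. Your additional verification that the supporting lemmas (in particular Lemma \ref{lbound}$(ii)$ and Corollary \ref{cinfinity}) cover the case $r=q$, $\alpha>1$ under hypothesis $(H_1)$ is consistent with, and slightly more explicit than, the paper's argument.
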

\begin{proof} $(i)$ follows from Proposition \ref{Proposition 1}, $(ii)$ follows from Proposition \ref{Proposition 2} and $(iii)$ from Proposition \ref{Proposition 4}. 
\end{proof}
\begin{figure}[h!]
	\centering
	\begin{tikzpicture}[>=latex]
		\draw[->] (-1,0) -- (7,0) node[below] {\scalebox{0.8}{$\lambda$}};
		\foreach \x in {}
		\draw[shift={(\x,0)}] (0pt,2pt) -- (0pt,-2pt) node[below] {\footnotesize $\x$};
		\draw[->] (0,-1) -- (0,3) node[left] {\scalebox{0.8}{$c$}};
		\foreach \y in {}
		\draw[shift={(0,\y)}] (2pt,0pt) -- (-2pt,0pt) nde[left] {\footnotesize $\y$};
		\draw[blue,thick] (1.55,2.5) .. controls (2,.2) and (3,0) .. (4.1,0);
		\draw [thick] (1.5,-.1) node[below]{\scalebox{0.8}{$\tilde{\lambda}_{1}$}} -- (1.5,0.05); 
		\draw [dashed] (1.5,0) -- (1.5,2.5);
		\draw [thick] (2.3,-.1) node[below]{\scalebox{0.8}{$\tilde{\lambda}_{2}$}} -- (2.3,0.05); 
		\draw [dashed] (2.3,0) -- (2.3,2.5);
		\draw [thick] (3.3,-.1) node[below]{\scalebox{0.8}{$\tilde{\lambda}_{k}$}} -- (3.3,0.05); 
		\draw [dashed] (3.3,0) -- (3.3,2.5);
		
				\draw [thick] (4.1,-.1) node[below]{\scalebox{0.8}{$\lambda_{1}$}} -- (4.1,0.05); 
	
		\draw [thick] (4.9,-.1) node[below]{\scalebox{0.8}{$\lambda_{2}$}} -- (4.9,0.05); 
	
		\draw [thick] (5.7,-.1) node[below]{\scalebox{0.8}{$\lambda_{k}$}} -- (5.7,0.05); 
	
		\draw [thick] (3.8,.5) node[above]{$\cdots$}; 
			\draw [thick] (6,.5) node[above]{$\cdots$}; 
		\draw[blue,thick] (2.35,2.5) .. controls (2.8,.4) and (3.8,0) .. (4.9,0);
		\draw[blue,thick] (3.35,2.5) .. controls (3.8,.6) and (4.8,0) .. (5.7,0);
		
		\draw  (1,1) node[below]{\scalebox{1.5}{$\nexists$}}  ; 	
		\draw  (1.5,2.5) node[above]{\scalebox{0.8}{$\lambda_{c,1}$}} ; 	
		\draw  (2.3,2.5) node[above]{\scalebox{0.8}{$\lambda_{c,2}$}} ; 
		
		\draw  (3.3,2.5) node[above]{\scalebox{0.8}{$\lambda_{c,k}$}} ; 
		
		\draw [dashed] (0,2.5) -- (6,2.5);
		\draw  (0,2.5) node[left]{\scalebox{0.8}{$c^*$}} ;
	\end{tikzpicture}
	\caption{Energy curves Theorem \ref{thm3}} \label{fig1}
\end{figure}

Now we prove Theorem \ref{thm1}.

\begin{proof}[Proof of Theorem \ref{thm1}] 
	
	$(i)$ By Lemma \ref{Lemma 8}, if there exists $u\in E_R^{\alpha,p}(\mathbb{R}^N) $ such that $\Phi_\lambda(u)=c$ and $\Phi_\lambda'(u)=0$, then $u\in \mathcal{N}_c$, however, Proposition \ref{NEHARI} implies that  $ \mathcal{N}_c=\empty$ if $c\le 0$, therefore equation \eqref{103} has no nontrivial solution with energy $c \le 0$. The existence of $\lambda_{c,k}$, for all $c>0$, is given by Propositions \ref{Proposition 3} and \ref{Proposition 7}.
	
	$(ii)$ This follows from Theorem \ref{thm2} item $(iv)$ and Proposition \ref{Proposition 4}. 
	
	$(iii)$ This is a consequence of Theorem \ref{thm3}.
	
	$(iv)$ This follows from Proposition \ref{Proposition 4}.

\end{proof}

\begin{remark}\label{rnexistence} \noindent
	\begin{enumerate}
		\item[(i)] $\lambda_{c^*,k}\le \tilde{\lambda}_k$ for all $k\in \mathbb{N}$.
		\item[(ii)] Note from Proposition \ref{lbb2} that $\inf_{u\in \mathcal{N}_c}\Lambda_c(u)<0$ when $c>c^*$ and, since equation \eqref{103} has no positive solutions for $\lambda\le 0$ (by Pohozaev's identity), we must conclude that equation \eqref{103} has no positive solutions for $\lambda< \lambda_{c^*,1}$. 
		\item[(iii)] We believe that $\lambda_{c^*,1}>0$ when \eqref{cases} are not satisfied.
		\item[(iv)] Proposition \ref{lbb1} together with Proposition \ref{Proposition 4} item $(iii)$ imply that $(0,c^*]\ni c\mapsto \lambda_{c,1}$ is continuous when \eqref{cases} is satisfied.
	\end{enumerate}

\end{remark}

\newpage
{\bf Acknowledgement.}
This work was completed while the second author held a post-doctoral position at Florida Institute of Technology, Melbourne, United States of
America, supported  by CNPq/Brazil under Grant 201334/2024-0.

\def\cprime{$''$}

\end{document}